\documentclass[a4paper,reqno,11pt]{amsart}
\usepackage{amsfonts,amsmath,amsthm,amssymb,stmaryrd}
\usepackage{color}

\allowdisplaybreaks[4]

\newtheorem{Theorem}{Theorem}[section]

\newtheorem{Lemma}[Theorem]{Lemma}
\newtheorem{Proposition}{Proposition}[section]
\newtheorem{Remark}{Remark}[section]

\newcommand{\beq}{\begin{equation}}
\newcommand{\eeq}{\end{equation}}
\newcommand{\ben}{\begin{eqnarray}}
\newcommand{\een}{\end{eqnarray}}
\newcommand{\beno}{\begin{eqnarray*}}
\newcommand{\eeno}{\end{eqnarray*}}

\newcommand{\pt}{\partial_{t}}
\newcommand{\px}{\partial_{x}}
\newcommand{\py}{\partial_{y}}
\newcommand{\pz}{\partial_{z}}

\newcommand{\ty}{\infty}

\newcommand{\R}{\mathbb{R}}
\newcommand{\N}{\mathbb{N}}

\newcommand{\T}{\mathbb{T}}

\newcommand{\de}{\delta}

\newcommand{\la}{\lambda}
\newcommand{\ga}{\gamma}
\newcommand{\al}{\alpha}
\newcommand{\si}{\sigma}

\newcommand{\Om}{\Omega}
\newcommand{\om}{\omega}
\newcommand{\ue}{u_{\epsilon}}
\newcommand{\ve}{v_{\epsilon}}

\newcommand{\pe}{p_{\epsilon}}

\newcommand{\fs}{\frac{1}{2}}
\newcommand{\ome}{\omega_{\epsilon}}
\newcommand{\Hg}{H^{s, \gamma}_g}
\newcommand{\aep}{a_{\epsilon}}
\newcommand{\gse}{g^s_{\epsilon}}

\numberwithin{equation}{section} \allowdisplaybreaks
\usepackage[left=1 in, right=1 in,top=1 in, bottom=1 in]{geometry}
\begin{document}
\title[Local well-posedness of the boundary layer]{\bf  Local Well-Posedness of the Boundary Layer Equations for Dilatant Power-Law Fluids}
\author{Mingxue Zhang$^{1}$}
\author{Zhonger Wu$^{2,*}$}
\thanks{$^{1}$ Institute for Math and AI, Wuhan University, Wuhan, 430072, China }
\thanks{$^{2}$ Department of Mathematics, Shantou University, Shantou 515063, China }
\thanks{$^{*}$Corresponding author: Zhonger Wu, wze622520@163.com}
\thanks{Mingxue Zhang, zhangmingxuex@163.com}

\begin{abstract}

We establish the local-in-time existence and uniqueness of monotone solutions to the two-dimensional nonstationary boundary-layer equations for a dilatant power-law fluid in a periodic half-space for $1<n<\frac{7}{3}$. Under Oleinik's monotonicity condition and suitable weighted Sobolev assumptions on the initial data and outer flow, we
construct solutions through tangential regularization and derive uniform a priori estimates. A suitable good unknown compensates for the loss of one tangential derivative caused by the normal velocity. By combining
weighted energy estimates, the Fa\`{a} di Bruno formula, and the maximum and minimum principles, we control the nonlinear degenerate diffusion term $\partial_y^2(\omega^n)$, whose effective diffusion coefficient vanishes
as $\omega=\partial_yu$ decays at infinity, and propagate the weighted monotonicity of the vorticity. Within this exponent range, our result partially resolves the eleventh open problem posed by Oleinik and Samokhin \cite{OAO}
on the existence and uniqueness of solutions to nonstationary boundary-layer systems for dilatant fluids.

\noindent
{\bf \normalsize Keywords:}  {Non-Newtonian boundary layer;\, Dilatant power-law fluid;\, Degenerate diffusion;\, Local well-posedness;\, Weighted energy method.}\bigbreak

\end{abstract}
\subjclass[2010]{ 76D10; 35G31.}
\maketitle

\section{Introduction}
We study the well-posedness of the boundary layer equations for a dilatant power-law fluid in the periodic domain
$\{(t,x,y)\mid t\in[0,T],\ x\in\T,\ y\in\R_+\}$. The system reads
\begin{equation}\label{feiniu}
\left\{\begin{array}{l}
\pt u +u\px u +v\py u-\py(|\py u|^{n-1}\py u)+\px p=0,\\
\px u+\py v=0,\\
u|_{t=0}=u_0,\\
u|_{y=0}=v|_{y=0}=0,\\
u|_{y\rightarrow\ty}=U.
\end{array}\right.
\end{equation}
Here, $(u,v)$ denotes the velocity field. Moreover, $U$ and $p$ are the traces on the boundary of the tangential velocity and the pressure associated with the outer flow, respectively, and satisfy Bernoulli's law:
\begin{equation}\label{Bernoulli}
\pt U +U\px U +\px p=0.
\end{equation}
The rheological behavior of the fluid depends on the power-law index $n$: the fluid is pseudo-plastic for $0<n<1$, dilatant for $n>1$, and Newtonian when $n=1$. In the Newtonian case, \eqref{feiniu} reduces to the classical Prandtl system. Throughout this paper, we investigate the boundary layer problem for dilatant fluids.

The Prandtl equations, introduced by Prandtl in 1904 to describe the discrepancy between ideal fluid motion and low viscosity flow near a solid boundary, have since been studied extensively. A central analytical obstacle is the loss of one tangential derivative. In two space dimensions, Oleinik and Samokhin \cite{OAO} established the local existence and uniqueness of classical solutions under a monotonicity assumption by means of the Crocco transform. Under the additional favourable-pressure condition, Xin and Zhang \cite{ZPX} subsequently obtained global weak solutions. In 2015, \cite{RA} and  \cite{NM} independently proved local well-posedness in weighted Sobolev spaces through energy methods. When these results are viewed together with the Sobolev ill-posedness mechanism identified by G\'{e}rard-Varet and Dormy \cite{DG2010}, monotonicity may be regarded as an almost necessary and sufficient condition for Sobolev well-posedness. Without monotonicity, well-posedness has instead been developed in analytic spaces \cite{MI,MCL,MP,MS,PZ} and in Gevrey spaces \cite{DC,HD,DG2015,WXLY2,CW}.

The three-dimensional problem is more involved because of the secondary flow, and monotonicity by itself no longer guarantees Sobolev well-posedness. The results in \cite{CJLARMA,CJLADV} show that the structural condition $\pz(\dfrac{u}{v})\equiv0$ and $\pz u>0$ is almost necessary and sufficient for well-posedness in Sobolev spaces. In the absence of such a structural assumption, a well-posedness result in the Gevrey class of index $2$ was obtained in \cite{WXLMY}.

Related developments also clarify how additional physical effects alter boundary-layer dynamics. For results concerning magnetic effects, we refer to \cite{WXLY,CJLCPAM,ZT,ZEW}; for thermal effects, see \cite{ZT2,YGW1,YGW2}.

In contrast with the extensive Newtonian theory, rigorous results for non-Newtonian boundary-layer equations remain limited. For pseudoplastic fluids, Oleinik and Samokhin \cite[Chapter~8]{OAO} used the Crocco transformation to establish the local existence and uniqueness of classical solutions to the two-dimensional nonstationary boundary-layer system. Using a related transformation, Zhang \cite{JWZ} obtained a corresponding result for a special class of outer flows. More recently, Wu and Tan \cite{WZE} proved well-posedness in the pseudoplastic regime $\frac{1}{3}<n<1$ by energy methods under Oleinik's monotonicity condition.For dilatant fluids, however, Chapter~8 of \cite{OAO} deals only with stationary boundary-layer problems and does not address the corresponding nonstationary system. At the end of their monograph, Oleinik and Samokhin therefore posed Open Problem~11 concerning the existence and uniqueness of solutions to the nonstationary boundary-layer system for dilatant fluids, noting that, in this regime, the nonstationary counterparts of the questions studied in Chapter~8 remained open. Subsequently, Said \cite{Said} established the local well-posedness of monotone solutions in the particular case $n=2$.

The present work provides a partial answer to Open Problem~11 over a range of power-law exponents. More precisely, for $1<n<\frac{7}{3}$, we establish the local-in-time existence and uniqueness of monotone solutions to the two-dimensional nonstationary boundary layer system for a dilatant power-law fluid in a periodic half-space, under Oleinik's monotonicity condition and suitable weighted Sobolev assumptions. Thus, within this exponent range and under the above structural and regularity assumptions, our result partially resolves Open Problem~11 of Oleinik and Samokhin.

Under the monotonicity assumption $\py u>0$, system \eqref{feiniu} can be rewritten as
\begin{equation}\label{feiniu2}
\left\{\begin{array}{l}
\pt u +u\px u +v\py u-n(\py u)^{n-1}\py^2 u+\px p=0,\\
\px u+\py v=0,\\
u|_{t=0}=u_0,\\
u|_{y=0}=v|_{y=0}=0,\\
u|_{y\rightarrow+\ty}=U,
\end{array}\right.
\end{equation}
where the initial data $u_0=u_0(x,y)$ and the outer flow
$U=U(t,x)$ are given and satisfy the zeroth-order
compatibility conditions
\begin{equation}\label{initial-matching}
 u_0(x,0)=0,\qquad
 \lim_{y\to+\infty}u_0(x,y)=U(0,x).
\end{equation}

To construct approximate solutions and establish the required a priori estimates, we introduce a regularized system by adding a small tangential viscosity term. This regularization provides additional parabolic
regularity in the $x$ direction while preserving the essential structure of the original boundary layer equations.
As usual, the smooth compatible regularization used in the approximation argument is still denoted by $u_0$.
More precisely, for $\epsilon\in(0,1]$, we consider the following regularized problem:
\begin{equation}\label{feiniu2-1}
\left\{\begin{array}{l}
\pt \ue +\ue\px \ue +\ve\py \ue-\epsilon^2\px^2\ue-n(\py \ue)^{n-1}\py^2 \ue+\px \pe=0,\\
\px \ue+\py \ve=0,\\
\ue|_{t=0}=u_0,\\
\ue|_{y=0}=\ve|_{y=0}=0,\\
\ue|_{y\rightarrow+\ty}=U,
\end{array}\right.
\end{equation}
where $U$ and $\pe$ satisfy the regularized Bernoulli's law
\begin{equation}\label{BL}
\pt U +U\px U -\epsilon^2\px^2U+\px \pe=0.
\end{equation}
Equivalently, the vorticity $\ome:=\py \ue>0$ satisfies

\begin{equation}\label{wodu}
\left\{\begin{array}{l}
\pt \ome +\ue\px \ome +\ve\py \ome-\epsilon^2 \px^2\ome-n\ome^{n-1}\py^2 \ome-n(n-1)
\ome^{n-2}|\py\ome|^2=0,\\
\ome|_{t=0}:=\om_{0}=\py u_0,\\
n\ome^{n-1}\py\ome|_{y=0}=\px \pe,\\
\ome|_{y\rightarrow+\ty}=0,
\end{array}\right.
\end{equation}
where the velocity field $(\ue,\ve)$ is given
\begin{equation}\label{uvdy}
\begin{aligned}
\ue(t,x,y):=U-\int_y^{\ty}\ome(t,x,z)\ dz,\\
\ve(t,x,y):=-\int_0^y \px \ue(t,x,z)\ dz.
\end{aligned}
\end{equation}
Set $\Omega:=\mathbb T\times\mathbb R_+$. We use the following weighted monotonicity class:
$$
\begin{aligned}
H_{\sigma, \delta}^{s, \gamma}:= & \bigg\{\omega: \Om \rightarrow \mathbb{R}:\|\omega\|_{H^{s, \gamma}}<+\infty,(1+y)^\sigma \omega \geq \delta, \\
& \text { and } \sum_{|\alpha| \leq 2}\left|(1+y)^{\sigma+\alpha_2} D^\alpha \omega\right|^2 \leq \frac{1}{\delta^2}\bigg\},
\end{aligned}
$$
where $s\geq6,~\ga\geq1,~\si>\ga+\frac{1}{2},~\de\in(0,1),~D^\alpha=\px^{\al_1}\py^{\al_2}$. Apart from that, the weighted norm $\|\cdot\|_{H^{s, \gamma}}$ denotes
\begin{equation}\label{hsga}
\|\omega\|_{H^{s, \gamma}}^2:=\sum_{|\alpha| \leq s}\left\|(1+y)^{\gamma+\alpha_2} D^\alpha \omega\right\|_{L^2\left(\Om\right)}^2.
\end{equation}
Correspondingly, we also denote
$$
H^{s, \gamma}:=\left\{\omega: \Om \rightarrow \mathbb{R}:\|\omega\|_{H^{s, \gamma}}<+\infty\right\}.
$$
\begin{Remark}
The weights describe the decay as $y\to+\infty$. The lower bound
$\omega\geq\delta(1+y)^{-\sigma}$ is compatible with
$(1+y)^\gamma\omega\in L^2(\Omega)$ only if
$\sigma>\gamma+\frac12$. Hence, $H_{\sigma,\delta}^{s,\gamma}$
is empty when $\sigma\leq\gamma+\frac12$.
\end{Remark}

We can now state the main result.

\begin{Theorem}[Local existence and uniqueness for \eqref{feiniu2}]
\label{zhuding}
Let $1<n<\frac{7}{3}$, $s\geq 6$ be an even integer, and let
$ \gamma\geq1,$
$ \gamma+\frac12<\sigma
 \leq\min\left\{\frac{2}{n-1},\gamma+\frac43\right\},$
$\delta\in\left(0,\frac12\right).$
Assume that the outer flow $(U,p)$ is periodic in $x$, satisfies
Bernoulli's law \eqref{Bernoulli}, and obeys $U>0$. Assume also that
\begin{equation}\label{outer-flow-regularity}
 \sup_{t\geq0}
 \sum_{\ell=0}^{\lfloor\frac{s+9}{2}\rfloor}
 \|\partial_t^\ell U(t)\|_
 {W^{s-2\ell+9,\infty}(\mathbb T)}
 <+\infty.
\end{equation}
Let the initial data $u_0$ satisfy the compatibility conditions
\eqref{initial-matching}, and assume that
$u_0-U(0)\in H^{s,\gamma-1},$
$ \omega_0:=\partial_yu_0
 \in H_{\sigma,2\delta}^{s,\gamma}.
$
Assume furthermore that the initial data and the outer flow satisfy
the compatibility conditions of order $s/2-1$ at the initial-boundary corner.

Then there exists a time
$ T=T\!\left(
 s,\gamma,\sigma,n,\delta,
 \|\omega_0\|_{H_g^{s,\gamma}},U
 \right)>0
$
such that the boundary layer problem \eqref{feiniu2} admits a unique
classical solution $(u,v)$ on $[0,T]$ satisfying
\[
 u-U\in L^\infty([0,T];H^{s,\gamma-1})
 \cap C([0,T];H^s\text{-}w).
\]
Moreover, the associated vorticity $\omega:=\partial_yu$ satisfies
\[
 \omega\in
 L^\infty([0,T];H_{\sigma,\delta}^{s,\gamma})
 \cap C([0,T];H^s\text{-}w).
\]
\end{Theorem}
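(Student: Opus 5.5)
The proof follows the Masmoudi--Wong energy method, adapted to the degenerate diffusion $n\omega^{n-1}\py^2\omega$. We first solve the regularized problem \eqref{wodu} for each fixed $\epsilon\in(0,1]$. Then we derive a priori estimates uniform in $\epsilon$ in the space $H^{s,\gamma}_g$: this is the weighted norm \eqref{hsga} with the purely tangential top-order term $\px^s\omega$ replaced by the good unknown
\[
g_s:=\px^s\omega-\frac{\py\omega}{\omega}\,\px^s(u-U).
\]
Finally we pass to the limit $\epsilon\to0$ by compactness and prove uniqueness by a weighted $L^2$ estimate on the difference of two solutions.

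For fixed $\epsilon$, existence on a short $\epsilon$-dependent interval follows from standard quasilinear parabolic theory applied to \eqref{wodu}, which is uniformly parabolic wherever $\omega$ is bounded below. We keep it so by working in $H^{s,\gamma}_{\sigma,\delta}$, which gives $\omega\ge\delta(1+y)^{-\sigma}$; the degeneracy at $y=\infty$ is absorbed by the weights. The uniform estimates come in three groups.
\begin{itemize}
\item \emph{Weighted estimates for $D^\alpha\omega$ with $\alpha_1\le s-1$.} Apply $D^\alpha$ to \eqref{wodu}, multiply by $(1+y)^{2\gamma+2\alpha_2}D^\alpha\omega$ and integrate. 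The diffusion yields the good term $n\int\omega^{n-1}(1+y)^{2\gamma+2\alpha_2}|\py D^\alpha\omega|^2$. The commutators $D^\alpha(\omega^{n-1}\py^2\omega)$ and $D^\alpha(\omega^{n-2}|\py\omega|^2)$ are expanded with the Fa\`a di Bruno formula. Each resulting term carries negative powers $\omega^{n-1-k}$, which we bound by $\delta^{-k}(1+y)^{\sigma k}$ together with the pointwise bounds in the definition of $H^{s,\gamma}_{\sigma,\delta}$.
\item \emph{Estimate for $g_s$.} The unknown $g_s$ cancels the term $\px^s v\,\py\omega$, which would otherwise lose a derivative. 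It satisfies a parabolic equation with the same degenerate diffusion plus controllable commutators. The weighted $L^2$ norm of $\px^s\omega$ is then recovered from $g_s$ via a Hardy-type inequality.
\item \emph{Boundary terms.} Terms at $y=0$ are handled through the Neumann condition $n\omega^{n-1}\py\omega=\px p_\epsilon$ and the compatibility conditions of order $s/2-1$. These let us reduce even normal derivatives on the boundary to time and tangential derivatives of $U$; this is why $s$ is even and why \eqref{outer-flow-regularity} involves many derivatives of $U$.
\end{itemize}

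Next we propagate the monotonicity class by the maximum and minimum principles. For the lower bound, set $\phi:=(1+y)^\sigma\omega$. It satisfies a parabolic equation whose zeroth-order coefficient involves
\[
n\omega^{n-1}\sigma(\sigma+1)(1+y)^{-2}\sim(1+y)^{\sigma-\sigma(n-1)-2}\cdot(\cdots).
\]
The condition $\sigma\le 2/(n-1)$ is exactly what makes $\omega^{n-1}(1+y)^{-2}\lesssim(1+y)^{-\sigma(n-1)}(1+y)^{-2}\cdots$ bounded, so that the minimum principle gives $\phi\ge 2\delta-Ct\ge\delta$ for small $t$. The upper bounds on $(1+y)^{\sigma+\alpha_2}D^\alpha\omega$ for $|\alpha|\le2$ follow from the energy bound and Sobolev embedding, using $s\ge6$. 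This closes the estimate
\[
\frac{d}{dt}\|\omega\|^2_{H^{s,\gamma}_g}\le C\bigl(1+\|\omega\|_{H^{s,\gamma}_g}\bigr)^{m}
\]
with $C$ independent of $\epsilon$. The weighted $H^s$ bound then yields a uniform existence time $T$, and a continuation argument extends each $\epsilon$-solution to $[0,T]$. Letting $\epsilon\to0$ by weak-$*$ compactness together with Aubin--Lions gives the solution of \eqref{feiniu2} with weak continuity in $H^s$. For uniqueness, take two solutions and estimate the difference $\tilde g=\tilde\omega-\frac{\py\omega_1}{\omega_1}(\tilde u)$ in weighted $L^2$; Gronwall then gives $\tilde\omega=0$.

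The main obstacle is the top-order degenerate diffusion commutators in the $g_s$ and $\py^{k}$ estimates. Terms such as $(n-1)\omega^{n-2}\py^{k}\omega\,\py^2\omega$ must be absorbed by the weak dissipation $\omega^{n-1}|\py\cdot|^2$, whose coefficient decays like $(1+y)^{-\sigma(n-1)}$. To absorb them we would use Cauchy--Schwarz, writing $\omega^{n-2}=\omega^{(n-1)/2}\cdot\omega^{(n-3)/2}$, and control the leftover factor $\omega^{(n-3)/2}\lesssim(1+y)^{\sigma(3-n)/2}$ with the extra weights. Balancing these powers against the available weight gain produces the restrictions $\sigma\le\gamma+\frac43$ and, combined with $\sigma>\gamma+\frac12$ and $\sigma\le 2/(n-1)$, the range $n<\frac73$: at $n=\frac73$, $\gamma=1$ the bound $2/(n-1)=\frac32=\gamma+\frac12$ leaves no admissible $\sigma$. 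Our first attempt would be to track every Fa\`a di Bruno term by its total weight exponent and verify this bookkeeping.
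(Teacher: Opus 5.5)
Most of your outline coincides with the paper's proof: tangential regularization, Fa\`a di Bruno bookkeeping for $D^\alpha\om$ with $\alpha_1\le s-1$, the good unknown $g_s$, boundary reduction through $n\om^{n-1}\py\om|_{y=0}=\px p_\epsilon$, Aubin--Lions compactness, and a good-unknown $L^2$ comparison for uniqueness.

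The step that fails is your claim that the upper bounds $|(1+y)^{\si+\alpha_2}D^\alpha\om|\le\de^{-1}$, $|\alpha|\le2$, ``follow from the energy bound and Sobolev embedding''. The energy norm only carries the weight $(1+y)^{\ga+\alpha_2}$. The best pointwise decay you can extract from it (e.g.\ via $|f(y)|^2\le2\int_y^\infty|f||\py f|\,dz$ plus a tangential embedding) is essentially $(1+y)^{-\ga-\frac12-\alpha_2}$. Since the class forces $\si>\ga+\frac12$, this is strictly too weak. It also gives a bound proportional to $\|\om\|_{H^{s,\ga}}$, not the fixed $\de^{-1}$ on which every constant $C_\de$ in your energy estimates depends.

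This is not cosmetic. With only $|\py\om|\lesssim(1+y)^{-\ga-\frac32}$ and $\om\ge\de(1+y)^{-\si}$ you get $(1+y)|a|\lesssim(1+y)^{\si-\ga-\frac12}$, which is unbounded. Likewise $(1+y)^\si\om$ is no longer bounded above. So the Hardy-type equivalence between $\|\om\|_{H^{s,\ga}_g}$ and $\|u-U\|_{H^{s,\ga-1}}+\|\om\|_{H^{s,\ga}}$ is lost. So are the coefficient bounds in the $g_s$ equation, which rely on $|(1+y)a|\le C_\de$ and similar estimates. Your continuity argument therefore does not close.

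The missing ingredient is a maximum principle for these weighted low-order derivatives, as in Proposition~\ref{xianyanguji-linfty}. Set $B_\alpha=(1+y)^{\si+\alpha_2}D^\alpha\om_\epsilon$, which satisfies \eqref{bbll}, and $I=\sum_{|\alpha|\le2}|B_\alpha|^2$. Then $I$ satisfies $(\pt+u_\epsilon\px+v_\epsilon\py-\epsilon^2\px^2-n\om_\epsilon^{n-1}\py^2)I\le\Lambda_\epsilon I$.

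\begin{itemize}
\item The only delicate contribution is $|\alpha|=2$, $|\beta|=1$ in $D^\beta(\om^{n-1})\py^2D^{\alpha-\beta}\om$. It involves a third derivative not contained in $I$. It is absorbed pointwise by the dissipation $-2n\om^{n-1}|\py B_\alpha|^2$ after writing $\om^{n-2}=\om^{\frac{n-1}{2}}\om^{\frac{n-3}{2}}$.
\item The trace $I|_{y=0}$ is not prescribed data. It is controlled by integrating $\pt I|_{y=0}$ in time, computed from the equation using $u_\epsilon=v_\epsilon=0$ at $y=0$.
\item Since $\om_0\in H^{s,\ga}_{\si,2\de}$ gives $\|I(0)\|_{L^\infty}\le\frac{1}{4\de^2}$, Lemma~\ref{jida} yields $\|I(t)\|_{L^\infty}\le\de^{-2}$ for short time.
\end{itemize}

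The same boundary issue affects your minimum principle. There $\phi|_{y=0}=\om|_{y=0}$ evolves in time and must be bounded below through $\pt\om|_{y=0}$.

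Finally, your explanation of $\si\le\frac{2}{n-1}$ is misattributed. The coefficient $\om^{n-1}(1+y)^{-2}$ is bounded for every $\si>0$, so this restriction is not what makes the minimum principle work. In the paper it bounds $(1+y)^{-\ga}\om^{\frac{1-n}{2}}$. That negative power appears when the weak dissipation $\|(1+y)^\ga\om^{\frac{n-1}{2}}\py g_s\|_{L^2}$ is used to control traces such as $\|\px^s\om|_{y=0}\|_{L^2(\T)}$.
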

\begin{Remark}
\label{rem:initial-boundary-compatibility}
Evaluating \eqref{feiniu2} at $y=0$ gives
\[
 \left.\partial_y(\omega^n)\right|_{y=0}=\partial_xp.
\]
Higher-order compatibility conditions are obtained by repeatedly
differentiating this identity in time and eliminating the time
derivatives of $\omega$ through the vorticity equation. For an even
integer $s\geq 6$, these conditions are required through order
$s/2-1$ to ensure classical regularity at the initial-boundary corner.
\end{Remark}
\begin{Remark}
\label{rem:outer-flow}
The regularity assumption \eqref{outer-flow-regularity} is the same
convenient high-regularity hypothesis used in \cite[Theorem~2.2]{NM}. It is
not expected to be optimal: the a priori weighted energy estimates require
fewer derivatives of $U$, while the additional derivatives are used to
simplify the construction of the regularized solutions. Bernoulli's law
\eqref{Bernoulli} determines $\partial_xp$ from $U$. Moreover, the
positivity of the outer flow is consistent with the monotonicity assumption,
because
\[
 U(t,x)=\int_0^{+\infty}\omega(t,x,y)\,dy>0
\]
whenever $u|_{y=0}=0$, $u|_{y\to+\infty}=U$, and $\omega=\partial_yu>0$.
\end{Remark}

\begin{Remark}\label{rem:weights-and-index}
In addition to the necessary compatibility condition
$\sigma>\gamma+\frac12$ discussed above, the restrictions
$\sigma\leq2/(n-1)$ and $\sigma\leq\gamma+\frac43$ are used to control
negative powers of $\omega$ and high-order weighted products,
respectively. Moreover,
\[
\gamma+\frac12<\sigma\leq\frac{2}{n-1},
\qquad \gamma\geq1,
\]
implies $n<\frac73$. Conversely, for every $1<n<\frac73$, one can choose
admissible weights, for instance by taking $\gamma=1$ and choosing
$\sigma$ appropriately. We do not claim that these restrictions are
optimal.
\end{Remark}

\begin{Remark}
\label{rem:newtonian-limit}
When $n=1$, the nonlinear diffusion reduces to
$\partial_y(\omega^n)=\partial_y^2u$, and \eqref{feiniu2} becomes the
classical Prandtl system. The good unknown and the weighted solution spaces
in Theorem~\ref{zhuding} are then precisely those used in \cite{NM}. Thus
the present formulation is consistent with the Newtonian theory, although
no uniform estimate or convergence result as $n\to1^+$ is claimed.
\end{Remark}

The remainder of the paper is organized as follows. Section~\ref{zhunbei} introduces the weighted spaces and auxiliary inequalities. Section~\ref{zhumingti} establishes estimates for the tangentially regularized problem that are uniform in $\epsilon$. Section~\ref{sec:local-wp} constructs a solution by compactness and proves uniqueness. 

\section{Preliminaries}\label{zhunbei}
We first introduce the notation and estimates used below.

Use $\py^{-1}$ to denote the inverse of the derivative $\py$, that is $\py^{-1}f(y)=\int_0^y f(\tilde{y})d\tilde{y}.$
Use $\mathcal{P}(\cdot)$
to denote a nondecreasing polynomial function, which may differ from line to line. Use $C$ to denote a non-negative constant that may vary from line to line. Use $[a]$ to denote the greatest integer not exceeding $a$.

In order to solve the dilatant power-law fluid boundary layer equations \eqref{feiniu} in Sobolev spaces, we need to overcome the following two main difficulties:
\begin{itemize}
  \item Derivative loss. From the divergence-free condition and the boundary condition $\eqref{feiniu}_4$, we know that $v=-\py^{-1}\px u$, which creates a loss of the $x$-derivative, hence we can't use the standard energy estimates.
  \item The difficulty caused by viscosity. Specifically, $\omega^{n-1}$ vanishes as $y\to+\infty$, while differentiating it may produce negative powers of $\omega$; the normal diffusion is therefore degenerate and its commutators require weighted estimates.
\end{itemize}

The core idea to overcome the above two difficulties is to use the monotonicity assumption.


For the first difficulty, we refer to the cancellation mechanism proposed in \cite{NM} and define the following norm:
$$
\|\om\|_{H^{s,\gamma}_g}^2=
\sum_{\substack{|\al|\leq s \\ \al_1\leq s-1}}
\|(1+y)^{\gamma+\alpha_2}D^{\al}\om\|^2_{L^2}
+\|(1+y)^{\gamma}g^{s}\|^2_{L^2},
$$
where $g^{s}:=\px^{s}\om-a\px^{s}(u-U)$ and $a:=\frac{\py\om}{\om}$.

On the one hand, $g^{s}$ can avoid the loss of $x$-derivative; see Section \ref{zuoqiexiangguji} for details.
On the other hand, we know from \cite[Appendix A]{NM} that there exist positive constants $c_\de$ and $C_{s,\ga,\si,\de}$ such that
\begin{equation}\label{chabuduo}
\begin{split}
c_\de\|\om\|_{H^{s, \gamma}_g}
\leq \|u-U\|_{H^{s, \ga-1}}
+\|\omega\|_{H^{s, \gamma}}
\leq C_{s,\ga,\si,\de}(\|\om\|_{H^{s, \gamma}_g}+
\|\px^{s} U\|_{L^2(\T)}).
\end{split}
\end{equation}
Therefore, in order to overcome the first difficulty, we will estimate $\|\om\|_{H^{s, \gamma}_g}$ instead of $\|\omega\|_{H^{s, \gamma}}$.

For the second difficulty, we need to precisely characterize the decay rates of $\om^{n-1}$ and its higher-order derivatives as $y\rightarrow+\ty$.

To handle higher-order derivatives of $\om^{n-1}$, we need the following Fa\`a di Bruno formula; see \cite{JA}.
\begin{Lemma}[Fa\`a di Bruno formula]\label{faa}
For a multi-index $\al=(\al_1,\al_2)$ and a real number $m$, let $f(t,x,y)>0$ be sufficiently smooth. Then there holds
\begin{equation}\label{faa1}
D^\alpha (f^m) = \sum_{r=1}^{|\al|} m^{\underline{r}} \, f^{m-r} \sum_{\substack{\beta^1 + \cdots + \beta^r = \alpha \\ |\beta^i| \ge 1}} \frac{\alpha!}{r!} \prod_{i=1}^r \frac{D^{\beta^i} f}{\beta^i!},
\end{equation}
where:
\begin{itemize}
  \item $m^{\underline{r}}=m(m-1)\cdots(m-r+1)$ is the falling factorial (this term vanishes when $r>m$ if $m$ is an positive integer.)
  \item $\al!=\al_1!\al_2!$, and similarly for $\beta^i!$.
  \item The inner sum runs over all ordered $r$-tuples $(\beta^1,\cdots,\beta^r)$ of non-zero multi-indices whose sum equals $\al$.
\end{itemize}
\end{Lemma}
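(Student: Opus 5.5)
We prove the multivariate Fa\`a di Bruno formula \eqref{faa1} by induction on $|\alpha|$, treating $D^\alpha$ as a composition of first-order derivatives $\partial\in\{\px,\py\}$. The base case $|\alpha|=1$ is the chain rule $D^\alpha(f^m)=m f^{m-1}D^\alpha f$. This matches \eqref{faa1} with $r=1$, $\beta^1=\alpha$ and $\alpha!/\beta^1!=1$. Since $f>0$, the power $f^{m-r}$ is smooth for every real $m$. Hence no integrality of $m$ is needed, and the remark about vanishing when $r>m$ is just the observation that $m^{\underline r}=0$ in that case.

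For the inductive step it is cleaner to first prove an equivalent unordered version. Write
\[
D^\alpha(f^m)=\sum_{\pi} m^{\underline{|\pi|}} f^{m-|\pi|}\prod_{B\in\pi} D^{\beta(B)} f .
\]
Here we fix a word $\partial_{i_1}\cdots\partial_{i_{|\alpha|}}$ representing $D^\alpha$ (the derivatives commute). The sum runs over set partitions $\pi$ of $\{1,\dots,|\alpha|\|$ into blocks, and $\beta(B)$ is the multi-index recording how many $\px$'s and $\py$'s lie in the block $B$. Applying one more derivative $\partial_{i_{k+1}}$ to a term, the Leibniz rule produces two kinds of terms. If the derivative hits $f^{m-r}$, we get the factor $(m-r)f^{m-r-1}\partial f$, and $m^{\underline r}(m-r)=m^{\underline{r+1}}$; this corresponds to adding the singleton block $\{k+1\}$. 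If it hits one factor $D^{\beta(B)}f$, we get $D^{\beta(B)+e_{i_{k+1}}}f$, which corresponds to inserting $k+1$ into the existing block $B$. Every partition of $\{1,\dots,k+1\}$ arises exactly once from a partition of $\{1,\dots,k\}$ in one of these two ways. This closes the induction.

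It remains to convert the set-partition sum into the ordered-tuple form \eqref{faa1}, and this combinatorial count is the only real step. Fix an ordered tuple $(\beta^1,\dots,\beta^r)$ of nonzero multi-indices with sum $\alpha$. We count the ordered partitions $(B_1,\dots,B_r)$ of the positions with $\beta(B_i)=\beta^i$. The $\alpha_1$ positions carrying $\px$ are distributed among the blocks in $\alpha_1!/\prod_i\beta^i_1!$ ways. Independently, the $\alpha_2$ positions carrying $\py$ are distributed in $\alpha_2!/\prod_i\beta^i_2!$ ways. This gives $\alpha!/\prod_i\beta^i!$ ordered partitions. Each unordered partition into $r$ blocks corresponds to exactly $r!$ ordered ones, since the blocks are nonempty and hence distinct. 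Summing over ordered tuples and dividing by $r!$ therefore reproduces the set-partition sum exactly, giving the coefficient $\frac{\alpha!}{r!}\prod_i\frac{1}{\beta^i!}$ in \eqref{faa1}.

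The main point to handle carefully is this bookkeeping: ordered tuples versus set partitions, and the multinomial count split by direction. Everything else is the one-variable chain rule argument. An alternative check is to compare with the univariate formula via Taylor expansion of $f^m$ around a point, but the inductive route above is self-contained. Smoothness of $f$ (at least $C^{|\alpha|}$) together with $f>0$ justifies every differentiation used.
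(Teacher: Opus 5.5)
Your proof is correct, and it takes a different route from the paper, which gives no argument at all and simply cites \cite{JA}. That reference is a translation of Fa\`a di Bruno's original note and concerns the classical one-variable formula for the $k$-th derivative of a composite function.

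Your argument has two steps. First, you induct on the length of a word of first-order derivatives to obtain the set-partition form $\sum_{\pi} m^{\underline{|\pi|}} f^{m-|\pi|}\prod_{B\in\pi}D^{\beta(B)}f$. Second, you convert this to \eqref{faa1} by the multinomial count $\alpha!/\prod_i\beta^i!$, split by direction, and divide by $r!$ to pass from ordered to unordered block decompositions. This proves exactly the mixed-partial, ordered-tuple version that the paper actually uses, for instance in \eqref{K41} and \eqref{K410}. The cited one-variable source does not literally contain that version. So your proof supplies the standard but not entirely trivial passage to mixed derivatives that the citation leaves implicit; the Taylor-expansion check you mention, applying the one-variable formula to $t\mapsto f(z+th)$ and comparing coefficients of $h^\alpha$, is the other usual way to make that passage. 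The citation buys brevity. Your route buys self-containedness, and it shows that the only property of $u\mapsto u^m$ used is $\frac{d^r}{du^r}u^m=m^{\underline{r}}u^{m-r}$ for $u>0$, so the same proof works for any smooth outer function.

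Two cosmetic remarks:
\begin{itemize}
\item The identity as stated holds only for $|\alpha|\geq 1$, since for $\alpha=0$ the right-hand side is an empty sum. Your base case $|\alpha|=1$ respects this.
\item The index set in your set-partition sum should read $\{1,\dots,|\alpha|\}$.
\end{itemize}
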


Moreover, for $\om\in H^{s,\gamma}_{\si,\de}$, we have the following decay property of $D^{\al}\om$ as $y$ goes to $+\ty$, which is similar to \cite[Remark C.4]{NM}.
\begin{Proposition}\label{shuaijian}
Let $s\geq5$ be an integer, $\ga\geq1,~\si>\ga+\frac{1}{2}$ and $\de\in(0,1)$. If $\om\in H^{s,\gamma}_{\si,\de}$, there exists a constant $C>0$, which depends on $s,~\ga,~\si$ and $\de$, such that for all $|\al|\leq s-2$,
\begin{equation}\label{shuaijian1}
|D^{\al}\om|\leq Cc_\alpha(1+y)^{-b_{\al}}\quad\text{in}~\T\times\R_+
\end{equation}
where
\begin{equation}\label{shuaijian2}
b_\alpha :=
\begin{cases}
\sigma + \alpha_2 & \text{if } |\alpha| \leq 2, \\
\frac{(s-2 - |\alpha|)\sigma + (|\alpha| - 2)\gamma}{s-4} + \alpha_2 & \text{if } 3 \leq |\alpha| \leq s-3, \\
\gamma + \alpha_2 & \text{if } |\alpha| = s-2,
\end{cases}
\end{equation}
and
\begin{equation}\label{shuaijian3}
c_\alpha :=
\begin{cases}
1 & \text{if } |\alpha| \leq 2, \\
1+\|\omega\|_{H^{s, \gamma}} & \text{if } 3 \leq |\alpha| \leq s-3, \\
\|\omega\|_{H^{s, \gamma}} & \text{if } |\alpha| = s-2.
\end{cases}
\end{equation}
\end{Proposition}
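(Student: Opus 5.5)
The plan has three parts: the two endpoint cases come from the definition of the class and a weighted Sobolev embedding, and the intermediate range comes from interpolating between them.

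\textbf{Case $|\alpha|\le 2$.} This is immediate from the definition of $H^{s,\gamma}_{\sigma,\delta}$. The pointwise constraint $\sum_{|\alpha|\le2}|(1+y)^{\sigma+\alpha_2}D^\alpha\omega|^2\le \delta^{-2}$ gives
\[
|D^\alpha\omega|\le \delta^{-1}(1+y)^{-\sigma-\alpha_2},
\]
so $c_\alpha=1$ and $b_\alpha=\sigma+\alpha_2$, with $C\ge\delta^{-1}$.

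\textbf{Case $|\alpha|=s-2$.} Here I would use a weighted one-sided Sobolev embedding on $\T\times\R_+$. In the $x$ direction I use $H^1(\T)\subset L^\infty(\T)$, applied to $f=(1+y)^{\gamma+\alpha_2}D^\alpha\omega$. In the $y$ direction I use the estimate
\[
|g(y)|^2\le C\|g\|_{L^2(y,\infty)}\|\partial_y g\|_{L^2(y,\infty)}+C\|g\|_{L^2(y,\infty)}^2
\]
on half-lines. Together these yield
\[
\|f\|_{L^\infty}\le C\sum_{i+j\le2}\|\partial_x^i\partial_y^j f\|_{L^2}.
\]
When $\partial_y$ falls on the weight it produces $(1+y)^{\gamma+\alpha_2-1}$, which is dominated by the weight itself. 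The derivatives $\partial_x^i\partial_y^j D^\alpha\omega$ have order at most $s$, and the accompanying weight $(1+y)^{\gamma+\alpha_2+j}$ is exactly the one appearing in the $H^{s,\gamma}$ norm \eqref{hsga}. Hence
\[
|D^\alpha\omega|\le C\|\omega\|_{H^{s,\gamma}}(1+y)^{-\gamma-\alpha_2}.
\]

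\textbf{Case $3\le|\alpha|\le s-3$.} This is the main step, and it requires interpolating between the two endpoints. Writing $\theta=(|\alpha|-2)/(s-4)$, the target exponent is
\[
b_\alpha=(1-\theta)\sigma+\theta\gamma+\alpha_2 .
\]
I would follow \cite[Remark C.4]{NM} and prove a weighted Gagliardo--Nirenberg inequality. The first approach I would try is to apply the sup bound above to the weighted quantity $(1+y)^{b_\alpha}D^\alpha\omega$ on dyadic strips $y\in[2^k,2^{k+1}]$. On each strip the weight is essentially the constant $2^{k}$. One then interpolates, for functions on the strip, between the $L^\infty$ bound on derivatives of order at most $2$ (decay $2^{-k\sigma}$) and the $L^2$ bound on derivatives of order at most $s$ (decay $2^{-k\gamma}$ times $\|\omega\|_{H^{s,\gamma}}$). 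Standard Gagliardo--Nirenberg interpolation $\|D^{j}h\|_{L^\infty}\lesssim\|h\|_{W^{2,\infty}}^{1-\theta}\|h\|_{H^{s}}^{\theta}$ is valid for $2\le j\le s-2$ in two dimensions, since $s-j>1$.

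The delicate point is the $\alpha_2$ weighting: each $\partial_y$ must carry an extra factor $(1+y)$. To handle it I would rescale each strip to unit size by $y=2^k\tilde y$, so that $\partial_y=2^{-k}\partial_{\tilde y}$ converts the $\alpha_2$ powers automatically. Because $x\in\T$ is not rescaled, the scaling is anisotropic, and I expect its bookkeeping to be the main obstacle. The goal is to verify that the resulting exponents combine to exactly $b_\alpha$. The product $1^{1-\theta}\|\omega\|^{\theta}$ is then bounded by $1+\|\omega\|_{H^{s,\gamma}}$, which gives $c_\alpha$. If the anisotropy breaks exact scaling, the fallback is to interpolate only in $y$ on each strip and to treat the $x$-derivatives through the periodic embedding. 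Any loss in the $y$-weights would be absorbed using $\sigma>\gamma+\tfrac12$.
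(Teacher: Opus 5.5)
Your plan is correct, and it follows a different route from the one the paper points to.

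\textbf{The paper's route.} The paper omits the argument. It defers to \cite[Remark C.4]{NM} and tracks constants through \cite[Lemma C.3]{NM}, an interpolation step whose output is the geometric mean $\sqrt{C_0C_2}$ of two neighbouring bounds. Applied repeatedly between the order-$2$ information (rate $\sigma$) and the order-$(s-2)$ information (rate $\gamma$), this makes the rate linear in $|\alpha|$. The inequality $2\sqrt{C_0C_2}\le C_0+C_2$ then turns the product into the additive constant $c_\alpha=1+\|\omega\|_{H^{s,\gamma}}$.

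\textbf{Your route.} You localize to dyadic strips and rescale $y=2^k\tilde y$. This reduces all intermediate orders at once to a single unweighted interpolation on the fixed domain $\T\times[1,2]$, with constants independent of $k$.

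The anisotropy you worry about is harmless. Since $x$ is not rescaled, $x$-derivatives acquire no weight, which matches the fact that $b_\alpha$ sees $\alpha_1$ only through $|\alpha|$. Undoing $\partial_{\tilde y}=2^k\partial_y$ returns exactly the $+\alpha_2$. Moreover, $(1-\theta)\sigma+\theta\gamma$ with $\theta=(|\alpha|-2)/(s-4)$ is precisely the middle case of $b_\alpha$, so no fallback is needed.

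\textbf{Two details to state explicitly.} First, your $\theta$ exceeds the two-dimensional scaling exponent $(j-2)/(s-3)$ of Gagliardo--Nirenberg. This is legitimate only because $\|h\|_{W^{2,\infty}}\lesssim\|h\|_{H^s}$ on the fixed strip (since $s\ge4$). That lets you raise $\theta$ and absorb the additive lower-order term of the bounded-domain inequality. Second, the piece $0\le y\le 1$ is not covered by the strips $[2^k,2^{k+1}]$, $k\ge0$, and must be handled by unweighted Sobolev embedding.

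\textbf{Comparison.} Your approach is one-shot and self-contained. It also gives slightly better rates, since the Jacobian yields $\|h\|_{H^s}\lesssim 2^{-k(\gamma+\frac12)}\|\omega\|_{H^{s,\gamma}}$ rather than $2^{-k\gamma}$. The route via \cite{NM} works pointwise on the half-line without localization, at the cost of iterating level by level.
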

The derivation of the coefficient $c_\alpha$ utilizes \cite[Lemma C.3]{NM} and the fact $2\sqrt{C_0C_2}\leq C_0+C_2$. And we omit the proof for brevity.
\begin{Remark}\label{shuaijian4}
To reduce the number of cases, we can also directly write it in the following form.
\begin{equation}\label{shuaijian5}
|D^{\al}\om|\leq C(1+\|\omega\|_{H^{s, \gamma}})(1+y)^{-b_{\al}}\quad\text{in}~\T\times\R_+
\end{equation}
where
\begin{equation}\label{shuaijian6}
b_\alpha :=
\begin{cases}
\sigma + \alpha_2 & \text{if } |\alpha| \leq 1, \\
\frac{(s-2 - |\alpha|)\sigma + (|\alpha| - 2)\gamma}{s-4} + \alpha_2 & \text{if } 2 \leq |\alpha| \leq s-2.
\end{cases}
\end{equation}
\end{Remark}

With Lemma \ref{faa} and Proposition \ref{shuaijian}, we are able to overcome the second difficulty.

Next, for the estimation of the $L^{\ty}$ norm of $D^{\al}\om,~|\al|\leq2$, we need the following two extremum principles. Their proofs are almost identical to those of \cite[Lemma E.1]{NM} and \cite[Lemma E.2]{NM}, so we omit them here.
\begin{Lemma}[Maximum Principle for Parabolic Equations]
\label{jida}
Let $\epsilon \geq 0$. If $H \in C([0, T]; C^2(\mathbb{T} \times \mathbb{R}_+)) \cap C^1([0, T]; C^0(\mathbb{T} \times \mathbb{R}_+))$ is a bounded function that satisfies the following
\[
\left\{ \partial_t + b_1 \partial_x + b_2 \partial_y - \epsilon^2 \partial_x^2 - b_3\partial_y^2 \right\} H \leq fH \quad \text{in } [0, T] \times \mathbb{T} \times \mathbb{R}_+,
\]
where the coefficients $b_1, b_2$, $b_3$ and $f$ are continuous and satisfy
\begin{equation}\label{eq:E.1}
b_3>0,\quad
\left\| \frac{b_2}{1 + y} \right\|_{L^\infty([0, T] \times \mathbb{T} \times \mathbb{R}_+)} < +\infty \quad \text{and} \quad \|f\|_{L^\infty([0, T] \times \mathbb{T} \times \mathbb{R}_+)} \leq \lambda,
\end{equation}
then for any $t \in [0, T]$,
\begin{equation}\label{eq:E.2}
\sup_{\mathbb{T} \times \mathbb{R}_+} H(t) \leq
\max \left\{ e^{\lambda t} \|H(0)\|_{L^\infty(\mathbb{T} \times \mathbb{R}_+)},
\max_{\tau \in [0, t]} \left\{ e^{\lambda(t-\tau)} \|H(\tau)|_{y=0}\|_{L^\infty(\mathbb{T})} \right\} \right\}.
\end{equation}
\end{Lemma}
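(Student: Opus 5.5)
The statement to prove is the maximum principle, Lemma~\ref{jida}. I would argue by contradiction via a penalized comparison function, following the standard parabolic argument of \cite[Lemma E.1]{NM}.

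\textbf{Reduction to $f<0$.} First I remove the zeroth-order term. Set $\tilde H:=e^{-(\lambda+\eta)t}H$ for a small $\eta>0$. Then
\[
\left\{\partial_t+b_1\partial_x+b_2\partial_y-\epsilon^2\partial_x^2-b_3\partial_y^2\right\}\tilde H\leq (f-\lambda-\eta)\tilde H .
\]
Here $f-\lambda-\eta\leq-\eta<0$. It therefore suffices to show that, for every $t$,
\[
\sup\tilde H(t)\leq M:=\max\Big\{\|H(0)\|_{L^\infty},\ \max_{\tau\leq t}e^{-\lambda\tau}\|H(\tau)|_{y=0}\|_{L^\infty}\Big\},
\]
and then let $\eta\to0$. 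We may assume $M\geq0$; if instead $\tilde H<0$ everywhere, the bound is trivial, and $M\ge 0$ always holds since $M$ is a maximum of norms.

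\textbf{Penalization at infinity.} Because the domain is unbounded in $y$ and $b_2$ may grow linearly, I introduce $\Phi:=\kappa e^{Kt}(1+y)^2$ with $\kappa>0$ small and $K$ large. Write $B:=\|b_2/(1+y)\|_{L^\infty}$. Then
\[
L\Phi=\kappa e^{Kt}\big(K(1+y)^2+2b_2(1+y)-2b_3\big).
\]
The term $b_3$ is only positive and not bounded, which causes trouble. I avoid it by using the weaker penalization $\Phi=\kappa e^{Kt}(1+y)^{\theta}$ with $\theta\in(0,1]$; for concreteness I take $\Phi=\kappa e^{Kt}\log(2+y)$-type growth, or simply $\kappa e^{Kt}(1+y)$. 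For $\Phi=\kappa e^{Kt}(1+y)$ we have $\partial_y^2\Phi=0$, so
\[
L\Phi=\kappa e^{Kt}(K(1+y)+b_2)\geq\kappa e^{Kt}(K-B)(1+y)>0
\]
as soon as $K>B$.

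\textbf{Contradiction at a first touching point.} Consider $W:=\tilde H-M-\Phi$. Since $H$ is bounded, $W<0$ for $y$ large, uniformly on $[0,t]$. By periodicity in $x$, if $\sup W>0$ somewhere then there is a first time $t_0>0$ and a point $(x_0,y_0)$ at which $W$ attains the value $0$; note $t_0>0$ because $W(0)\leq-\Phi<0$. We have $y_0>0$ because at $y=0$, $\tilde H\leq M$ and so $W<0$. At this interior point, with $t_0$ the first time, we get $\partial_tW\ge0$, $\partial_xW=\partial_yW=0$ and $\partial_x^2W,\partial_y^2W\le0$. Hence $LW\geq0$, using $b_3>0$ and $\epsilon\ge0$. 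On the other hand,
\[
LW\leq(f-\lambda-\eta)\tilde H-L\Phi<0,
\]
because $\tilde H=M+\Phi>0$ there, $f-\lambda-\eta<0$, and $L\Phi>0$. This is a contradiction. Letting $\kappa\to0$ gives $\tilde H\le M$, and multiplying back by $e^{(\lambda+\eta)t}$ and sending $\eta\to0$ yields \eqref{eq:E.2}.

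\textbf{Main obstacle.} The delicate step is the choice of barrier. We only know $b_3>0$, with no upper bound, so the barrier must satisfy $\partial_y^2\Phi\le0$. It must also dominate the linear growth of $b_2$, which is exactly why the hypothesis $\|b_2/(1+y)\|_{L^\infty}<\infty$ appears. The linear weight $(1+y)$ meets both requirements.

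A secondary point is the regularity needed to evaluate derivatives at the touching point and to justify $\partial_tW\ge0$. This uses $H\in C^1_tC^0\cap C_tC^2$ together with the continuity of the coefficients. Attainment of the maximum follows from $W\to-\infty$ as $y\to\infty$ and the compactness of $\T\times[0,R]\times[0,t]$.
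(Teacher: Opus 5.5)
Your proof is correct and follows the standard argument that the paper omits and defers to \cite[Lemma E.1]{NM}: an exponential rescaling that makes the zeroth-order coefficient nonpositive, a barrier growing in $y$ that handles the unbounded strip and the linear growth of $b_2$, and a contradiction at the first touching point, where you use $M\geq0$ together with $\partial_tW\geq0$, $\partial_xW=\partial_yW=0$ and $\partial_x^2W,\partial_y^2W\leq0$. Your linear barrier $\kappa e^{Kt}(1+y)$ satisfies $\partial_y^2\Phi=0$, so the argument needs no bound on the variable coefficient $b_3>0$, which is the new feature of this version relative to \cite{NM}.
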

\begin{Lemma}[Minimum Principle for Parabolic Equations]
\label{jixiao}
Let $\epsilon \geq 0$. If $H \in C([0, T]; C^2(\mathbb{T} \times \mathbb{R}_+)) \cap C^1([0, T]; C^0(\mathbb{T} \times \mathbb{R}_+))$ is a bounded function with
\[
\kappa(t) := \min \left\{ \min_{\mathbb{T} \times \mathbb{R}_+} H(0), \min_{[0,t] \times \mathbb{T}} H\big|_{y=0} \right\} \geq 0
\]
and satisfies
\[
\left\{ \partial_t + b_1 \partial_x + b_2 \partial_y - \epsilon^2 \partial_x^2 - b_3\partial_y^2 \right\} H = fH
\]
where the coefficients $b_1, b_2$, $b_3$ and $f$ are continuous and satisfy \eqref{eq:E.1}, then for any $t \in [0, T]$,
\begin{equation}\label{eq:E.3}
\min_{\mathbb{T} \times \mathbb{R}_+} H(t) \geq (1 - \lambda t e^{\lambda t}) \kappa(t).
\end{equation}
\end{Lemma}

Now, we present some commonly used inequalities, which can be found in \cite{CJLCPAM}, see also \cite{NM,CJX}.
\begin{Lemma}\label{jingchangyong}
For proper functions $f_1,f_2$, there holds:
\begin{description}
  \item[(i)] If $\lim_{y\rightarrow+\ty}(f_1f_2)(x,y)=0$, then we have
\begin{equation}\label{L1}
\left|\int_{\T}(f_1f_2)|_{y=0}dx\right|
\leq \|\py f_1\|_{L^2(\Om)}\|f_2\|_{L^2(\Om)}
+\|f_1\|_{L^2(\Om)}\|\py f_2\|_{L^2(\Om)}.
\end{equation}
In particular, if $\lim_{y\rightarrow+\ty}f_1(x,y)=0$, then
\begin{equation}\label{L1.5}
\left\|f_1|_{y=0}\right\|_{L^2(\T)}
\leq \sqrt{2}\|f_1\|^{\frac{1}{2}}_{L^2(\Om)}\|\py f_1\|^{\frac{1}{2}}_{L^2(\Om)}.
\end{equation}
  \item[(ii)] Let $\ga\in\R$ and an integer $s\geq3$, then for any $\al=(\al_1,\al_2)\in\N^2$ and
  $\tilde{\al}=(\tilde{\al}_1,\tilde{\al}_2)\in\N^2$, which satisfy
  $|\al|+|\tilde{\al}|\leq s$, there holds
\begin{equation}\label{L2}
\|(D^{\al}f_1\cdot D^{\tilde{\al}}f_2)(t,\cdot)\|_{L^2_{\gamma+\al_2+\tilde{\al}_2}(\Om)}
\leq C\|f_1(t)\|_{H^{s, \gamma_1}}\|f_2(t)\|_{H^{s, \gamma_2}},
\end{equation}
where $\ga_1,\ga_2\in\R$ and $\ga_1+\ga_2=\ga$.
  \item[(iii)] For any $\la>\frac{1}{2}$, $\tilde{\la}>0$, we have
\begin{equation}\label{L3}
\begin{aligned}
& \|(1+y)^{-\lambda}(\partial_y^{-1} f_1)(y)\|_{L_y^2(\mathbb{R}_{+})} \leq \frac{2}{2 \lambda-1}\|(1+y)^{1-\lambda} f_1(y)\|_{L_y^2(\mathbb{R}_{+})}, \\
& \|(1+y)^{-\tilde{\lambda}}(\partial_y^{-1} f_1)(y)\|_{L_y^{\infty}(\mathbb{R}_{+})} \leq \frac{1}{\tilde{\lambda}}\|(1+y)^{1-\tilde{\lambda}} f_1(y)\|_{L_y^{\infty}(\mathbb{R}_{+})}.
\end{aligned}
\end{equation}
And let $\gamma\in\R$ and an integer $s\geq3$, then for any
$\al=(\al_1,\al_2)\in\N^2$ and
$\tilde{\beta}=(\tilde{\beta}_1,0)\in\N^2$, which satisfy
$|\al|+|\tilde{\beta}|\leq s$, there holds
\begin{equation}\label{L4}
\|(D^{\al}f_1\cdot D^{\tilde{\beta}}
\py^{-1}f_2)(t,\cdot)\|_{L^2_{\gamma+\al_2}(\Om)}\leq
C \|f_1(t)\|_{H^{s,\gamma+\la}}\|f_2(t)\|_{H^{s,1-\la}}.
\end{equation}
In particular, for $\la=1$
\begin{equation}\label{L4.5}
\begin{aligned}
 \|(1+y)^{-1}(\partial_y^{-1} f_1)(y)\|_{L_y^2(\mathbb{R}_{+})}
&\leq 2\|f_1(y)\|_{L_y^2(\mathbb{R}_{+})}, \\
\|(D^{\al}f_1\cdot D^{\tilde{\beta}}
\py^{-1}f_2)(t,\cdot)\|_{L^2_{\gamma+\al_2}(\Om)}
&\leq
C \|f_1(t)\|_{H^{s,1+\gamma}}\|f_2(t)\|_{H^{s,0}}.
\end{aligned}
\end{equation}
\item[(iv)]For any $\la>\frac{1}{2}$, we have
\begin{equation}\label{L5}
\|(\py^{-1}f_1)(y)\|_{L_y^{\ty}(\R_+)}
\leq C\|f_1\|_{L^2_{y,\la}(\R_+)}.
\end{equation}
And let $\gamma\in\R$ and an integer $s\geq2$, then for any
$\al=(\al_1,\al_2)\in\N^2$ and
$\tilde{\beta}=(\tilde{\beta}_1,0)\in\N^2$, which satisfy
$|\al|+|\tilde{\beta}|\leq s$, there holds
\begin{equation}\label{L6}
\|(D^{\al}f_1\cdot D^{\tilde{\beta}}
\py^{-1}f_2)(t,\cdot)\|_{L^2_{\gamma+\al_2}(\Om)}\leq
C \|f_1(t)\|_{H^{s,\gamma}}\|f_2(t)\|_{H^{s,\la}}.
\end{equation}
\end{description}
\end{Lemma}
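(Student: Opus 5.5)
The statement immediately above is Lemma~\ref{jingchangyong}, a collection of elementary weighted inequalities. I would prove each item directly by one-dimensional calculus in $y$, then integrate in $x$ and use Sobolev embedding on $\Om$.

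\textbf{Item (i).} Since $f_1f_2\to0$ as $y\to+\infty$, write
\[
(f_1f_2)(x,0)=-\int_0^{\infty}\py(f_1f_2)\,dy .
\]
Integrate over $\T$, expand $\py(f_1f_2)=\py f_1\,f_2+f_1\,\py f_2$, and apply Cauchy--Schwarz to each piece; this gives \eqref{L1}. Taking $f_2=f_1$ gives
\[
\|f_1|_{y=0}\|_{L^2(\T)}^2\le 2\|f_1\|_{L^2}\|\py f_1\|_{L^2},
\]
which is \eqref{L1.5}.

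\textbf{Item (iii), Hardy-type bounds.} For the $L^2$ bound with $\la>\frac12$, set $F=\py^{-1}f_1$, so $F(0)=0$. Integrate by parts:
\[
\int_0^\infty (1+y)^{-2\la}F^2\,dy=\frac{2}{2\la-1}\int_0^\infty (1+y)^{1-2\la}Ff_1\,dy-\Big[\tfrac{(1+y)^{1-2\la}}{2\la-1}F^2\Big]_0^\infty .
\]
The boundary term is nonpositive, after a truncation argument to justify dropping the contribution at infinity. Applying Cauchy--Schwarz to the right side and dividing through gives the constant $\frac{2}{2\la-1}$. The $L^\infty$ version is immediate:
\[
|F(y)|\le \|(1+y)^{1-\tilde\la}f_1\|_{L^\infty}\int_0^y(1+z)^{\tilde\la-1}\,dz\le \frac{(1+y)^{\tilde\la}}{\tilde\la}\,\|(1+y)^{1-\tilde\la}f_1\|_{L^\infty}.
\]

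\textbf{Item (iv), first bound.} Cauchy--Schwarz gives
\[
|\py^{-1}f_1(y)|\le\Big(\int_0^\infty(1+z)^{-2\la}\,dz\Big)^{1/2}\|f_1\|_{L^2_\la},
\]
and the first factor is finite because $2\la>1$.

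\textbf{Item (ii), product estimate \eqref{L2}.} Split the weight as $(1+y)^{\ga+\al_2+\tilde\al_2}=(1+y)^{\ga_1+\al_2}(1+y)^{\ga_2+\tilde\al_2}$. Without loss of generality $|\al|\le s/2$; the other case is symmetric. Put the lower-order factor $(1+y)^{\ga_1+\al_2}D^\al f_1$ in $L^\infty(\Om)$, using $H^2(\Om)\hookrightarrow L^\infty$. This costs two further derivatives, so we need $|\al|+2\le s$, which holds since $|\al|\le s/2$ and $s\ge3$. Put the other factor in $L^2$. One must check that differentiating the weight $(1+y)^{\ga_1+\al_2}$ only produces lower powers, which are controlled by the $H^{s,\ga_1}$ norm. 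The borderline case, where both indices are near $s/2$, is handled with $L^4\times L^4$ and the embedding $H^1\hookrightarrow L^4$ instead.

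\textbf{Items (iii)--(iv), the $\py^{-1}$ products \eqref{L4}, \eqref{L6}.} These follow the same pattern. The only new point is that $D^{\tilde\beta}$ involves only $x$-derivatives, so it commutes with $\py^{-1}$.
\begin{itemize}
\item If $|\tilde\beta|$ is small, place $(1+y)^{-\la}\py^{-1}D^{\tilde\beta}f_2$ in $L^\infty_y$ via the second bound in \eqref{L3}, with $L^\infty_x$ obtained from an $H^1_x$ embedding. Then $D^\al f_1$ goes in $L^2$ with weight $\ga+\la+\al_2$.
\item If $|\tilde\beta|$ is large, put $D^\al f_1$ in $L^\infty$ and use the $L^2$ Hardy bound, the first estimate in \eqref{L3}.
\end{itemize}
For \eqref{L6}, use \eqref{L5} instead, so that no weight is lost on $f_1$.

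\textbf{Main obstacle.} The delicate step is the bookkeeping of weights and derivative counts in the mixed cases of \eqref{L4}, when $|\al|$ and $|\tilde\beta|$ are comparable and $s=3$. There one must interpolate, using $L^\infty_xL^2_y$ on one factor and $L^2_xL^\infty_y$ on the other, with one-dimensional embeddings in each variable separately, to stay within the $s$ derivatives available.
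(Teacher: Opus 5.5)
Your proposal is correct and is essentially the standard argument behind the references the paper cites in place of a proof (\cite{CJLCPAM}). That argument uses integration by parts in $y$ for (i), Hardy and Cauchy--Schwarz in $y$ for \eqref{L3} and \eqref{L5}, and Sobolev embedding with the lower-order factor in $L^\infty$ for the product estimates.

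One correction to your ``main obstacle'' paragraph. The split $|\tilde\beta|\le s-2$ versus $|\tilde\beta|\ge s-1$ (so that $|\alpha|\le 1$) already closes \eqref{L4} for every $s\ge3$, with no interpolation needed. The mixed pairing $L^\infty_xL^2_y\times L^2_xL^\infty_y$ is genuinely needed in the top case $|\tilde\beta|=s$, $\alpha=0$ of \eqref{L6}. There the Hardy route would require $(1+y)^{\gamma+\mu}f_1\in L^\infty$ for some $\mu>\frac12$, which the $H^{s,\gamma}$ norm does not provide.
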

Moreover, we have the following calculus inequalities, which is \cite[Appendix B]{NM}
\begin{Lemma}\label{hardy}
Let $\phi : \Om\rightarrow \R$. Then \\
(i) if $\lambda>\fs$ and $\lim_{y\rightarrow +\infty} f(x,y)=0$, then
\begin{equation}\label{hd2}
\|(1+y)^\lambda f\|_{L^{2}(\Om)}\leq \frac{2}{2\lambda+1}|(1+y)^{\lambda+1}\py f\|_{L^{2}(\Om)};
\end{equation}
(ii) if $\lambda<-\fs$, then
\begin{equation}\label{hd1}
\begin{aligned}
\|(1+y)^\lambda f\|_{L^{2}(\Om)}\leq
&\sqrt{-\frac{1}{2\lambda+1}}\|f|_{y=0}\|_{L^{2}(\T)}
-\frac{2}{2\lambda+1}|(1+y)^{\lambda+1}\py f\|_{L^{2}(\Om)};
\end{aligned}
\end{equation}
\end{Lemma}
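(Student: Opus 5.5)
This is the weighted Hardy inequality of \cite[Appendix B]{NM}, and I would prove it by a one-dimensional integration by parts in $y$ for each fixed $x\in\T$, then integrate over $\T$. Since both inequalities are linear in $f$, I may assume $f$ is smooth with enough decay that all integrals are finite; the general case then follows by density.

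\emph{Part (i).} Fix $x$ and assume $\lambda>\fs$. Write
\[
\int_0^{\ty}(1+y)^{2\lambda}f^2\,dy=\frac{1}{2\lambda+1}\int_0^{\ty}\py\big[(1+y)^{2\lambda+1}\big]f^2\,dy .
\]
Integrating by parts gives two boundary terms and an interior term. The boundary term at $y=0$ equals $-\frac{1}{2\lambda+1}f^2|_{y=0}\le 0$, so it can be dropped. The boundary term at infinity vanishes: $f\to0$, and finiteness of the right-hand side forces $(1+y)^{\lambda+\frac12}f\to0$ along a sequence. The interior term is $-\frac{2}{2\lambda+1}\int(1+y)^{2\lambda+1}f\py f\,dy$. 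Cauchy--Schwarz bounds it by
\[
\frac{2}{2\lambda+1}\|(1+y)^\lambda f\|_{L^2_y}\,\|(1+y)^{\lambda+1}\py f\|_{L^2_y}.
\]
Dividing by $\|(1+y)^\lambda f\|_{L^2_y}$, squaring, and integrating in $x$ yields \eqref{hd2}.

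\emph{Part (ii).} Now $\lambda<-\fs$, so $2\lambda+1<0$ and $(1+y)^{2\lambda+1}$ decays. The same integration by parts applies, but the signs change and no decay of $f$ is needed. The term at infinity still vanishes, since $(1+y)^{2\lambda+1}f^2\to0$ along a sequence whenever the quantities involved are finite. The term at $y=0$ becomes $-\frac{1}{2\lambda+1}f^2|_{y=0}$, which is now positive and must be kept. Setting $A=\|(1+y)^\lambda f\|_{L^2(\Om)}$, $B=\|(1+y)^{\lambda+1}\py f\|_{L^2(\Om)}$ and $c=-\frac{1}{2\lambda+1}>0$, integration over $\T$ gives
\[
A^2\le c\,\|f|_{y=0}\|_{L^2(\T)}^2+2cAB .
\]
Solving this quadratic inequality in $A$ gives
\[
A\le cB+\sqrt{c^2B^2+c\|f|_{y=0}\|^2}\le 2cB+\sqrt c\,\|f|_{y=0}\|_{L^2(\T)}.
\]
Since $2cB=-\frac{2}{2\lambda+1}B$, this is exactly \eqref{hd1}.

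The only delicate point is the vanishing of the boundary term at infinity under just the finiteness of the norms. I would handle it by truncation: integrate over $[0,R]$ and pass to the limit along a sequence $R_k\to\ty$ on which $(1+R_k)^{2\lambda+1}f^2(R_k)\to0$. Such a sequence exists because $(1+y)^{2\lambda}f^2\in L^1_y$ gives $\liminf_{y\to\ty}(1+y)^{2\lambda+1}f^2=0$. Alternatively, one can argue by density of compactly supported functions (in part (i), functions vanishing at infinity).
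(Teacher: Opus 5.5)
Your proof is correct and is the standard argument. The paper gives no proof of its own and defers to \cite[Appendix B]{NM}, which uses exactly this route: integration by parts in $y$ against $\partial_y\big[(1+y)^{2\lambda+1}\big]$, then Cauchy--Schwarz, then solving the quadratic inequality, with the same constants. One small simplification in (ii): the truncated boundary term at $y=R$ is $\frac{1}{2\lambda+1}(1+R)^{2\lambda+1}f(x,R)^2\le 0$, so you can simply drop it, and your bound then holds uniformly in $R$ without assuming a priori that $\|(1+y)^\lambda f\|_{L^2}<\infty$.
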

\begin{Lemma}\label{chazhi}
Let $\phi~:~\T\times\R_+\rightarrow \R$. Then there exists a universal constant $\tilde{C}>0$ such that
\begin{equation}\label{chazhi1}
\|\phi\|_{L^{\ty}}\leq \tilde{C} (\|\phi\|_{L^2}
+\|\px\phi\|_{L^2}+\|\py^2\phi\|_{L^2}).
\end{equation}
\end{Lemma}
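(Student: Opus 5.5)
The plan is to reduce to a one-dimensional estimate for each Fourier mode in $x$, and then sum over the modes using the $\px$ derivative to gain decay in the frequency $k$. Write $\phi(x,y)=\sum_{k\in\mathbb Z}\phi_k(y)e^{ikx}$ (with $\T$ normalized so that Parseval reads $\|\phi\|_{L^2(\Om)}^2=c\sum_k\|\phi_k\|_{L^2(\R_+)}^2$). Set
\[
a_k:=\|\phi_k\|_{L^2(\R_+)},\qquad b_k:=\|\py^2\phi_k\|_{L^2(\R_+)}.
\]
Then
\[
\sum_k(1+|k|)^2a_k^2\leq C\big(\|\phi\|_{L^2}^2+\|\px\phi\|_{L^2}^2\big),\qquad \sum_k b_k^2\leq C\|\py^2\phi\|_{L^2}^2 .
\]
Since $|\phi(x,y)|\leq\sum_k\|\phi_k\|_{L^\infty(\R_+)}$, it suffices to bound the right-hand side of this inequality.

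\textbf{Step 1 (one-dimensional bound on the half-line).} For $f\in H^2(\R_+)$ and every $y\geq0$, I will show
\[
|f(y)|\leq C\big(L^{-1/2}\|f\|_{L^2(\R_+)}+L^{3/2}\|f''\|_{L^2(\R_+)}\big)\qquad\text{for all } L\in(0,1].
\]
To do this, choose an interval $I\subset\R_+$ of length $L$ containing $y$; this is always possible on the half-line. Let $P$ be the $L^2(I)$-projection of $f$ onto affine functions. By scaling $I$ to $[0,1]$, one gets $\|f-P\|_{L^\infty(I)}\leq CL^{3/2}\|f''\|_{L^2(I)}$, since the Taylor remainder is controlled by the second derivative. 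Equivalence of norms on the two-dimensional space of affine functions gives $\|P\|_{L^\infty(I)}\leq CL^{-1/2}\|P\|_{L^2(I)}\leq CL^{-1/2}\|f\|_{L^2(I)}$. Combining the two bounds proves the claim.

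Now optimize in $L$. If $a\leq b$, take $L=(a/b)^{1/2}$; otherwise take $L=1$. This yields
\[
\|f\|_{L^\infty(\R_+)}\leq C\big(a^{3/4}b^{1/4}+a\big),\qquad a=\|f\|_{L^2},\ b=\|f''\|_{L^2}.
\]
This step is the only delicate point. One must avoid an interpolation inequality of the form $\|f'\|^2\lesssim\|f\|\,\|f''\|$ on $\R_+$, because integrating by parts produces a boundary term at $y=0$. The local polynomial-approximation argument sidesteps this issue entirely.

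\textbf{Step 2 (summing over $k$).} For the second term, Cauchy--Schwarz gives
\[
\sum_k a_k\leq\Big(\sum_k(1+|k|)^{-2}\Big)^{1/2}\Big(\sum_k(1+|k|)^2a_k^2\Big)^{1/2}.
\]
For the first term, write
\[
a_k^{3/4}b_k^{1/4}=(1+|k|)^{-3/4}\big((1+|k|)a_k\big)^{3/4}b_k^{1/4}.
\]
Cauchy--Schwarz then bounds the sum over $k$ by
\[
\Big(\sum_k(1+|k|)^{-3/2}\Big)^{1/2}\Big(\sum_k\big((1+|k|)a_k\big)^{3/2}b_k^{1/2}\Big)^{1/2}.
\]
The first factor converges because $3/2>1$. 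For the second factor, Young's inequality with exponents $4/3$ and $4$ gives
\[
\big((1+|k|)a_k\big)^{3/2}b_k^{1/2}\leq(1+|k|)^2a_k^2+b_k^2 .
\]
By Parseval, both sums are bounded by $C(\|\phi\|_{L^2}+\|\px\phi\|_{L^2}+\|\py^2\phi\|_{L^2})^2$. Taking square roots and adding the two contributions yields \eqref{chazhi1}, with a universal constant $\tilde C$.
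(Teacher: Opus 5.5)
Your proof is correct, and it takes a different route from the one the inequality usually comes with. The paper gives no argument of its own here; it imports the inequality from \cite[Appendix B]{NM}.

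The standard route is an extension plus a full Fourier transform:
\begin{itemize}
\item Extend $\phi$ across $y=0$ by a higher-order reflection, e.g.\ $\phi(x,y):=4\phi(x,-y/2)-3\phi(x,-y)$ for $y<0$. This is $C^1$ across the boundary and controlled simultaneously in $L^2$, in $\px$ and in $\py^2$.
\item Use the Fourier transform on $\T\times\R$: $\|\phi\|_{L^\infty}\le\sum_k\int_{\R}|\hat\phi(k,\xi)|\,d\xi$.
\item Apply one Cauchy--Schwarz against the weight $(1+k^2+\xi^4)^{1/2}$, using $\int_{\R}(1+k^2+\xi^4)^{-1}\,d\xi\simeq(1+|k|)^{-3/2}$, which is summable in $k$.
\end{itemize}

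You keep Fourier series only in $x$. In place of the extension operator and the Fourier transform in $y$, you give a local affine-approximation proof of the half-line bound $\|f\|_{L^\infty}\lesssim\|f\|_{L^2}^{3/4}\|f''\|_{L^2}^{1/4}+\|f\|_{L^2}$. The same summability of $(1+|k|)^{-3/2}$ then reappears in your Step~2.

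\begin{itemize}
\item The Fourier route is shorter and adapts mechanically to other anisotropic combinations of derivatives.
\item Yours is more elementary and purely local in $y$; it needs neither an extension operator nor a transform in the normal variable.
\end{itemize}

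Two minor remarks, neither affecting validity:
\begin{itemize}
\item Since $[y,y+L]\subset\R_+$ for every $L>0$, you may take $L=(a/b)^{1/2}$ always. The restriction $L\le1$ and the extra $\|f\|_{L^2}$ term are therefore unnecessary.
\item Your warning that one must avoid $\|f'\|_{L^2}^2\lesssim\|f\|_{L^2}\|f''\|_{L^2}$ on $\R_+$ is overstated. Integrating by parts does produce the boundary term $f(0)f'(0)$. However, it can be absorbed using $|f(0)|^2\le2\|f\|_{L^2}\|f'\|_{L^2}$ and $|f'(0)|^2\le2\|f'\|_{L^2}\|f''\|_{L^2}$, which give $\|f'\|_{L^2}\le(1+\sqrt2)\|f\|_{L^2}^{1/2}\|f''\|_{L^2}^{1/2}$. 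That inequality does hold on the half-line and would give an alternative proof of Step~1.
\end{itemize}
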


\section{A Priori Estimates}\label{zhumingti}
In this section, we establish the a priori estimates for system \eqref{wodu}, which are required for the proof of Theorem \ref{zhuding}.

To justify the a priori analysis that follows, we first record the local solvability of the regularized system.  Indeed, the estimates below must be performed along a sufficiently smooth solution.  As in the regularized Prandtl construction of \cite[Sections~6--7]{NM}, for each fixed $\epsilon>0$ the standard parabolic argument provides such a solution.  We summarize the result in the following lemma:

\begin{Lemma}
[Local existence of the regularized system]
\label{lemmalocal}
Let $s\geq6$ be an even integer, $1<n<\frac{7}{3}$,
$\gamma\geq1$,
$\gamma+\frac12<\sigma\leq
\min\{\frac{2}{n-1},\gamma+\frac43\}$,
$\delta\in(0,\frac12)$, and $\epsilon\in(0,1]$.
Assume that $U$ and $p_\epsilon$ satisfy the regularized Bernoulli law
\eqref{BL} and the regularity assumption \eqref{outer-flow-regularity}.
Let $u_0$ be the smooth compatible regularization described above, and assume, in particular, that $\omega_0:=\partial_yu_0\in
 H_{\sigma,2\delta}^{s+12,\gamma}$.
Then there exist a time
\[
 T_\epsilon=T_\epsilon\!\left(
 s,\gamma,\sigma,n,\delta,\epsilon,
 \|\omega_0\|_{H_g^{s+4,\gamma}},U
 \right)>0,
\]
and a classical solution
$(u_\epsilon,v_\epsilon,\omega_\epsilon)$ of
\eqref{wodu}-\eqref{uvdy} on $[0,T_\epsilon]$, with
\[
 \omega_\epsilon\in
 C\bigl([0,T_\epsilon];H_{\sigma,\delta}^{s+4,\gamma}\bigr)
 \cap C^1\bigl([0,T_\epsilon];H^{s+2,\gamma}\bigr).
\]

Furthermore, the regularized problem \eqref{feiniu2-1} are satisfied by the velocity $(u_\epsilon,v_\epsilon)$ defined by \eqref{uvdy}.
\end{Lemma}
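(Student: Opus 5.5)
The plan is a fixed-point / iteration argument for the nondegenerate quasilinear parabolic system obtained from \eqref{wodu} with $\epsilon>0$ fixed, following the scheme of \cite[Sections~6--7]{NM}. The key observation is that the monotonicity class $H_{\sigma,\delta}^{s+4,\gamma}$ keeps the problem uniformly parabolic on bounded $y$-intervals. Indeed, $\omega\geq\delta(1+y)^{-\sigma}$, so the diffusion coefficient $n\omega^{n-1}\ge n\delta^{n-1}(1+y)^{-\sigma(n-1)}\ge n\delta^{n-1}(1+y)^{-2}$, using $\sigma\le 2/(n-1)$. The coefficient $n\omega^{n-1}$ degenerates only at $y=+\infty$, and at a controlled algebraic rate, while $\epsilon^2\px^2$ supplies full tangential diffusion. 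We therefore treat \eqref{wodu} as a weighted quasilinear parabolic problem with a Neumann-type boundary condition $\partial_y(\omega^n)|_{y=0}=\px p_\epsilon$.

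\textbf{Step 1 (linearized problem).} Given $\bar\omega$ in a ball of $C([0,T];H^{s+4,\gamma}_{\sigma,\delta})$ with $\bar\omega(0)=\omega_0$, we form $\bar u,\bar v$ via \eqref{uvdy}. We then solve the linear equation
\[
\pt\omega+\bar u\px\omega+\bar v\py\omega-\epsilon^2\px^2\omega-n\bar\omega^{n-1}\py^2\omega=n(n-1)\bar\omega^{n-2}|\py\bar\omega|^2
\]
with the boundary condition $n\bar\omega^{n-1}\py\omega|_{y=0}=\px p_\epsilon$. Existence comes from a Galerkin method or truncation in $y$ on $(0,R)$ followed by $R\to\infty$, using weighted $L^2$ energy estimates for $(1+y)^{\gamma+\alpha_2}D^\alpha\omega$ with $|\alpha|\le s+4$. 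In these estimates we use $\epsilon^2\|\px D^\alpha\omega\|^2$ to absorb the derivative loss from $\bar v\py\omega$. Since $\bar v$ contains $\px\bar u$, this loss costs only one $x$-derivative of the coefficient, which is available because $\bar\omega$ carries $s+4$ derivatives. Boundary terms at $y=0$ are handled by Lemma~\ref{jingchangyong}(i) together with the compatibility conditions. For these terms we use time derivatives: the boundary condition is converted by differentiating in $t$ (this is why $s$ is even and why $C^1([0,T];H^{s+2,\gamma})$ appears). Negative powers $\bar\omega^{n-1-r}$ produced by Fa\`a di Bruno (Lemma~\ref{faa}) are bounded using $\bar\omega^{-1}\le\delta^{-1}(1+y)^\sigma$ and the decay of Proposition~\ref{shuaijian}. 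These bounds are where the restrictions $\sigma\le 2/(n-1)$ and $\sigma\le\gamma+\frac43$ enter.

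\textbf{Step 2 (invariance of the monotonicity class).} We must show that for $T_\epsilon$ small the solution stays in $H^{s+4,\gamma}_{\sigma,\delta}$ starting from $H^{s+4,\gamma}_{\sigma,2\delta}$. For the lower bound, we apply Lemma~\ref{jixiao} to $H=(1+y)^\sigma\omega$. For the upper bounds on $(1+y)^{\sigma+\alpha_2}D^\alpha\omega$ with $|\alpha|\le2$, we apply Lemma~\ref{jida}, with boundary values at $y=0$ controlled via the boundary condition. The drift satisfies $\bar v/(1+y)\in L^\infty$ by Lemma~\ref{jingchangyong}(iii). The factor $(1-\lambda te^{\lambda t})$ and $e^{\lambda t}$ turn $2\delta$ into $\delta$ for small $t$.

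\textbf{Step 3 (contraction).} We estimate the difference of two iterates in a weaker norm, e.g.\ $L^\infty_tH^{0,\gamma}$ or $H^{2,\gamma}$. Combined with the uniform high-norm bound, this yields a contraction for small $T_\epsilon$ (or compactness plus uniqueness). The limit solves \eqref{wodu}, and the regularity $\omega_0\in H^{s+12,\gamma}$ leaves room for losses in the iteration. Finally, $u_\epsilon,v_\epsilon$ defined by \eqref{uvdy} satisfy \eqref{feiniu2-1}. To see this, integrate the vorticity equation in $y$ from $y$ to $\infty$ and use \eqref{BL} and decay at infinity. The boundary condition at $y=0$ then gives consistency, and $u_\epsilon|_{y=0}=0$ is propagated because $U-\int_0^\infty\omega_\epsilon$ solves a homogeneous transport-diffusion equation in $x$ with zero initial data.

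\textbf{Main obstacle.} The hard part is Step 1's uniform weighted high-order estimate near $y=\infty$, where the diffusion degenerates. There, commutators $[D^\alpha,\bar\omega^{n-1}]\py^2\omega$ involve $\bar\omega^{n-1-r}$ with large negative powers that must be compensated by the decay rates $b_\alpha$. I would first try to bound each such product in $L^2_{\gamma+\alpha_2}$ by placing the top-order factor in $L^2$ and all others in weighted $L^\infty$ from Proposition~\ref{shuaijian}, checking that the net weight exponent is nonpositive under $\sigma\le\min\{2/(n-1),\gamma+\frac43\}$.
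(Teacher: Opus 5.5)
Your proposal follows the paper's route. The paper gives no separate proof of Lemma~\ref{lemmalocal}; it only invokes, for fixed $\epsilon>0$, the standard parabolic construction of \cite[Sections~6--7]{NM}. Your iteration sketches that construction faithfully: absorb the $\bar v\,\partial_y$ derivative loss with $\epsilon^2\partial_x^2$, propagate the $H^{s+4,\gamma}_{\sigma,\delta}$ pointwise bounds via Lemmas~\ref{jida} and~\ref{jixiao}, and recover \eqref{feiniu2-1} with $u_\epsilon|_{y=0}=0$ from the homogeneous viscous Burgers equation satisfied by $U-\int_0^\infty\omega_\epsilon\,dy$.

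One adjustment is needed. At order $|\alpha|=s+4$, the source $n(n-1)\bar\omega^{n-2}|\partial_y\bar\omega|^2$ requires $\partial_y D^\alpha\bar\omega$, one derivative beyond $C([0,T];H^{s+4,\gamma})$. So your iteration ball must also carry the $L^2_t$ dissipation norm $\int_0^T\|(1+y)^{\gamma+\alpha_2}\bar\omega^{\frac{n-1}{2}}\partial_yD^\alpha\bar\omega\|_{L^2}^2\,dt$ for $|\alpha|\le s+4$, which your linear energy estimate supplies anyway.
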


With this local solution in hand, we fix $\epsilon\in(0,1]$ and derive estimates on a time interval independent of $\epsilon$.  These estimates will then allow the regularized solutions to be continued up to a common positive
time.

The argument is divided into three parts. First, when $|\alpha|\leq s$ and $\alpha_1\leq s-1$, no loss of tangential derivatives occurs, and the corresponding weighted $L^2$ estimates for $D^\alpha\omega_\epsilon$ follow from the standard energy method; this is carried out in Subsection \ref{zuofaxiangguji}.  Second, the remaining pure tangential derivative $\partial_x^s\omega_\epsilon$ cannot be estimated directly because $v_\epsilon=-\partial_y^{-1}\partial_xu_\epsilon$ contains one more $x$-derivative.  In Subsection \ref{zuoqiexiangguji}, this loss is removed by estimating the good unknown
\[
 g_\epsilon^s
 =\partial_x^s\omega_\epsilon
 -\frac{\partial_y\omega_\epsilon}{\omega_\epsilon}
  \partial_x^s(u_\epsilon-U).
\]
Finally, the two weighted $L^2$ estimates are combined to control $\|\omega_\epsilon\|_{H_g^{s,\gamma}}$, after which the maximum principle is applied to the weighted derivatives of order at most two and to
$(1+y)^\sigma\omega_\epsilon$.  This yields the uniform weighted $L^\infty$ bound and propagates the monotonicity lower bound.  These estimates close the continuation argument for the regularized solutions and provide the
uniform bounds needed for the compactness procedure in Section
\ref{sec:local-wp}.


\subsection{Estimates with Normal Derivatives}\label{zuofaxiangguji}
First, we will deal with the weighted estimates for $D^{\al}\om$ with $|\al|\leq s$ and $\al_1\leq s-1$. The result reads as the following:
\begin{Proposition}
\label{guji1}
Let $s\geq6$ be an even integer, $1<n<\frac{7}{3}$,
$\gamma\geq1$, $\gamma+\frac12<\sigma\leq\min\{\frac{2}{n-1},\gamma+\frac{4}{3}\}$, $\epsilon\in (0,1]$ and $\de\in(0,1)$ be sufficiently small.
If $(u_\epsilon,v_\epsilon,\omega_\epsilon)$ is a classical solution of \eqref{wodu} in $[0,T]$ and satisfies
\begin{equation*}
\ome\in C([0,T]; H^{s+4,\gamma}_{\si,\de})\cap C^{1}([0,T]; H^{s+2,\gamma}),
\end{equation*}
then there exists a positive constant C, which depends on $n,~s,~\ga,~\si$ and $\de$ such that
\begin{equation}\label{normal-energy-estimate}
\begin{split}
&\fs\frac{d}{dt}\sum_{\substack{|\al|\leq s \\ \al_1\leq s-1}}
\|(1+y)^{\ga+\al_2}D^{\al}\ome\|^2_{L^2}
+\epsilon^2\sum_{\substack{|\al|\leq s \\ \al_1\leq s-1}}\|(1+y)^{\ga+\al_2}\px D^{\al}\ome\|^2_{L^2}
\\
&+\frac{n}{2}\sum_{\substack{|\al|\leq s \\ \al_1\leq s-1}}
\|(1+y)^{\ga+\al_2}\ome^{\frac{n-1}{2}}\py D^{\al}\ome\|_{L^2}^2\\
\leq &
\frac{n}{4}\|(1+y)^{\ga}\ome^{\frac{n-1}{2}}\py\gse\|_{L^2}^2
+C_{s,\gamma,\sigma,n,\delta}\sum_{l=0}^{\frac{s}{2}}\|\partial_t^l\px^{s-2l+1}\pe\|_{L^{\ty}(\T)}^4\\
&+C_{s,\gamma,\sigma,n,\delta}(1+\|\px^s U\|_{L^2(\T)}+\|\ome\|_{\Hg})^{4(s+1)}.
\end{split}
\end{equation}

\end{Proposition}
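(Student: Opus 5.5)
We apply $D^\alpha$ with $|\alpha|\le s$, $\alpha_1\le s-1$ to the vorticity equation \eqref{wodu}. We write the diffusion in divergence form,
\[
n\omega_\epsilon^{n-1}\partial_y^2\omega_\epsilon+n(n-1)\omega_\epsilon^{n-2}|\partial_y\omega_\epsilon|^2=\partial_y\bigl(n\omega_\epsilon^{n-1}\partial_y\omega_\epsilon\bigr)=\partial_y^2(\omega_\epsilon^n),
\]
multiply by $(1+y)^{2\gamma+2\alpha_2}D^\alpha\omega_\epsilon$, and integrate over $\Omega$. The time derivative and the tangential viscosity give the first two terms on the left of \eqref{normal-energy-estimate}.

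For the transport part we split $D^\alpha(u_\epsilon\partial_x\omega_\epsilon+v_\epsilon\partial_y\omega_\epsilon)$ by Leibniz. The top-order pieces $u_\epsilon\partial_xD^\alpha\omega_\epsilon+v_\epsilon\partial_yD^\alpha\omega_\epsilon$ integrate by parts using $\partial_xu_\epsilon+\partial_yv_\epsilon=0$ and $v_\epsilon|_{y=0}=0$. This leaves only a weight-derivative term bounded by $\|v_\epsilon/(1+y)\|_{L^\infty}$. The dangerous piece is $D^\alpha v_\epsilon\,\partial_y\omega_\epsilon$. Since $\alpha_1\le s-1$, we have $D^\alpha v_\epsilon=-D^\alpha\partial_y^{-1}\partial_xu_\epsilon$, which involves at most $s$ tangential derivatives of $u_\epsilon-U$. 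For $\alpha_2\ge1$ it is $-\partial_x^{\alpha_1+1}\partial_y^{\alpha_2-1}u_\epsilon$. These are controlled via \eqref{L4}, \eqref{L6} and the equivalence \eqref{chabuduo}, together with the decay $|\partial_y\omega_\epsilon|\lesssim(1+y)^{-\sigma-1}$ from Proposition \ref{shuaijian}. The remaining Leibniz commutators are handled with \eqref{L2}. All of these terms are bounded by $(1+\|\partial_x^sU\|_{L^2(\T)}+\|\omega_\epsilon\|_{H^{s,\gamma}_g})^{C}$.

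For the diffusion term we integrate by parts once in $y$:
\[
-\int D^\alpha\partial_y^2(\omega_\epsilon^n)(1+y)^{2\gamma+2\alpha_2}D^\alpha\omega_\epsilon=\int (1+y)^{2\gamma+2\alpha_2}D^\alpha\partial_y(\omega_\epsilon^n)\,\partial_yD^\alpha\omega_\epsilon+\text{weight term}+\text{boundary term}.
\]
In $D^\alpha\partial_y(\omega_\epsilon^n)=n\omega_\epsilon^{n-1}\partial_yD^\alpha\omega_\epsilon+[D^\alpha,n\omega_\epsilon^{n-1}]\partial_y\omega_\epsilon$, the principal part gives the dissipation $n\|(1+y)^{\gamma+\alpha_2}\omega_\epsilon^{\frac{n-1}{2}}\partial_yD^\alpha\omega_\epsilon\|^2$. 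Half of it is kept on the left and half is used to absorb Cauchy--Schwarz remainders.

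The commutator is expanded with Lemma \ref{faa} (the Faà di Bruno formula) for $D^\beta(\omega_\epsilon^{n-1})$. Each term contains $\omega_\epsilon^{n-1-r}$, which has negative powers when $r\ge n-1$. We bound these using the lower bound $\omega_\epsilon\ge\delta(1+y)^{-\sigma}$, so that $\omega_\epsilon^{-k}\le C(1+y)^{k\sigma}$. The factors $D^{\beta^i}\omega_\epsilon$ are bounded by Proposition \ref{shuaijian}, or by Remark \ref{shuaijian4}, for low orders, while the highest-order factor is placed in weighted $L^2$. After pairing with $\omega_\epsilon^{\frac{n-1}{2}}\partial_yD^\alpha\omega_\epsilon$ (with the $\omega_\epsilon^{-\frac{n-1}{2}}$ split off), the powers of $(1+y)$ must balance. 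This weight bookkeeping is where the restrictions $\sigma\le\frac{2}{n-1}$ and $\sigma\le\gamma+\frac43$ enter, and I expect it to be the main obstacle. The gain of one in the weight per normal derivative, $b_\alpha\ni\alpha_2$, compensates the factor $(1+y)^{(n-1)\sigma}$ from negative powers only because $(n-1)\sigma\le2$. The interpolated decay rates in \eqref{shuaijian2} must be checked to cover the mid-order products.

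The case $\alpha=(s-1,1)$ is handled separately. There the commutator term $\partial_x^{s-1}(\omega_\epsilon^{n-1})\partial_y\omega_\epsilon$, or the top piece, involves $\partial_x^s\omega_\epsilon$ after integration. This is rewritten through $g^s_\epsilon+a_\epsilon\partial_x^s(u_\epsilon-U)$ and produces the term $\frac n4\|(1+y)^\gamma\omega_\epsilon^{\frac{n-1}{2}}\partial_y g^s_\epsilon\|^2$ on the right.

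The boundary terms at $y=0$ remain. For $\alpha_2$ even, or $\alpha_2=0$, we use the boundary condition $\partial_y(\omega_\epsilon^n)|_{y=0}=\partial_xp_\epsilon$ and, for higher normal derivatives, repeatedly substitute the equation. This converts $\partial_y^{2k+1}(\omega_\epsilon^n)|_{y=0}$ into time and tangential derivatives of $p_\epsilon$ plus lower-order traces, which is the reason $s$ is even. For odd $\alpha_2$ we reduce to $\partial_y^{\alpha_2+1}$ as in \cite{NM}. The traces are then estimated by \eqref{L1}/\eqref{L1.5} and absorbed into the dissipation, generating $\sum_l\|\partial_t^l\partial_x^{s-2l+1}p_\epsilon\|_{L^\infty}^4$ after Young's inequality. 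Collecting everything and using Young's inequality to homogenize the polynomial bound to degree $4(s+1)$ yields \eqref{normal-energy-estimate}.
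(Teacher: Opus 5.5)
Your architecture is the paper's: a weighted energy identity for $D^\alpha\ome$, Leibniz and Fa\`a di Bruno expansions with the decay rates of Proposition~\ref{shuaijian}, and reduction of top-order boundary traces through the boundary condition and the equation. The divergence form is a harmless variant, and it even makes the $\alpha_2=0$ boundary term exactly $\px^{\alpha_1+1}\pe$. The gap is in where you place the term $\frac n4\|(1+y)^\gamma\ome^{\frac{n-1}{2}}\py\gse\|_{L^2}^2$. No interior term needs it. The transport term $\ue\px D^\alpha\ome$ (for instance $\ue\px^s\py\ome$) disappears in the divergence-free integration by parts. Apart from it, every interior factor of order $s+1$ carries at most $s-1$ tangential derivatives, because $\beta\le\alpha$ and $\alpha_1\le s-1$, so it is covered by the dissipation. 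Terms such as $\py\ue\,\px^s\ome$ or $D^{(s-1,1)}\ve=-\px^s\ue$ are only of order $s$ and are controlled by \eqref{chabuduo}. Conversely, your boundary step breaks down exactly at $\alpha=(s-1,1)$: there you treat odd $\alpha_2$ ``as in \cite{NM}'', bound the traces by \eqref{L1}/\eqref{L1.5}, and absorb them into the dissipation.

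At that index, the factor carrying $\px^{s-1}\py^2$ (of $\ome$ in the paper's form, of $\ome^n$ in yours) has order $s+1$ and an even number of normal derivatives. It cannot be reduced through the boundary condition, and its trace is not controlled. One therefore integrates by parts in $x$ and must recover $\px^{s}\py\ome|_{y=0}$ from $\px^s$ of \eqref{yijie}, i.e.\ commute $\ome^{n-1}$ past $\px^s$.

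In the Newtonian case this gives exactly $\px^{s+1}p$, which is why \cite{NM} needs nothing more. Here it leaves the commutator $\px^s(\ome^{n-1})\py\ome$, whose expansion contains $(n-1)\ome^{n-2}\py\ome\,\px^s\ome$. Bounding the trace of this product by \eqref{jiguji} or \eqref{L1.5} requires $\py\px^s\ome$ in $L^2$ near $y=0$. This is a purely tangential quantity of order $s+1$, and it is not in the dissipation of this proposition, which only covers $\alpha_1\le s-1$.

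This is the term $J_{62}$ in the paper. It is treated by writing $\py\px^s\ome=\py\gse+\py\aep\,\px^s(\ue-U)+\aep\px^s\ome$, with $\py\gse$ paid for by the $\gse$-dissipation; this is legitimate because $\ome$ is bounded below near $y=0$. The resulting $\frac n4$ cross term is absorbed only after adding Proposition~\ref{guji-tangential}, whose right-hand side carries the matching $\frac n4\|(1+y)^\gamma\ome^{\frac{n-1}{2}}\py^2\px^{s-1}\ome\|_{L^2}^2$. So you have the right tool and the right index, but it must be applied to the boundary integral. As written, your claim that every trace is absorbed into the normal-derivative dissipation is false at $\alpha=(s-1,1)$.
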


\textbf{Proof.}
Let $\al$ satisfy $|\al|\leq s$ and $\al_1\leq s-1$. Applying $D^{\al}$ to \eqref{wodu}, it yields
\begin{equation}\label{alwodu}
\begin{split}
&\pt D^{\al}\ome +\ue\px D^{\al}\ome +\ve\py D^{\al}\ome -\epsilon^2\px^2D^{\al}\ome -n\ome^{n-1}\py^2 D^{\al}\ome \\
=
&-\sum_{0<\beta\leq\al}\binom{\al}{\beta}
\left(D^{\beta}\ue\px D^{\al-\beta}\ome +D^{\beta}\ve\py D^{\al-\beta}\ome\right)\\
&+n\sum_{0<\beta\leq\al}\binom{\al}{\beta}
D^{\beta}\ome ^{n-1}\py^2 D^{\al-\beta}\ome
+n(n-1)\sum_{0\leq\beta\leq\al}\binom{\al}{\beta}
D^{\beta}\ome^{n-2}D^{\al-\beta}|\py\ome|^2.
\end{split}
\end{equation}
Taking $L^2$ inner product of \eqref{alwodu} with $(1+y)^{2\ga+2\al_2}D^{\al}\ome$ yields
\begin{equation}\label{neiji}
\begin{split}
&\frac{1}{2}\frac{d}{dt}\|(1+y)^{\ga+\al_2} D^{\al}\ome\|^2_{L^2}
+\epsilon^2\|(1+y)^{\ga+\al_2}\px D^{\al}\ome\|^2_{L^2}
\\
=&n\int_{\Om}(1+y)^{2\ga+2\al_2}\om^{n-1}D^{\al}\ome\py^2 D^{\al}\ome dxdy\\
&-\int_{\Om}(1+y)^{2\ga+2\al_2}D^{\al}\ome
\Big(\ue\px D^{\al}\ome +\ve\py D^{\al}\ome \Big) dxdy\\
&-\sum_{0<\beta\leq\al}\binom{\al}{\beta}
\int_{\Om}(1+y)^{2\ga+2\al_2}D^{\al}\ome \Big(D^{\beta}\ue\px D^{\al-\beta}\ome +D^{\beta}\ve\py D^{\al-\beta}\ome \Big)dxdy\\
&+n\sum_{0<\beta\leq\al}\binom{\al}{\beta}
\int_{\Om}(1+y)^{2\ga+2\al_2}D^{\al}\ome
D^{\beta}\ome^{n-1}\py^2 D^{\al-\beta}\ome dxdy\\
&+n(n-1)\sum_{0\leq\beta\leq\al}\binom{\al}{\beta}
\int_{\Om}(1+y)^{2\ga+2\al_2}D^{\al}\ome
D^{\beta}\ome^{n-2}D^{\al-\beta}|\py\ome|^2 dxdy\\
:=&\sum\limits_{i=1}^5 I_i.
\end{split}
\end{equation}
For $I_1$, by integration by parts, we have
\begin{equation}\label{K1jisuan}
\begin{split}
I_1=
&-n\|(1+y)^{\ga+\al_2}\ome^{\frac{n-1}{2}}\py D^{\al}\ome\|_{L^2}^{2} \\
&-n(n-1)\int_{\Omega}(1+y)^{2\ga+2\al_2}\ome^{n-2}\py\ome D^{\al}\ome
\py D^{\al}\ome dxdy\\
&-n(2\ga+2\al_2)\int_{\Omega}(1+y)^{2\ga+2\al_2-1}\ome^{n-1} D^{\al}\ome
\py D^{\al}\ome dxdy\\
&-n\int_{\T}(\ome^{n-1} D^{\al}\ome
\py D^{\al}\ome)|_{y=0}dx\\
:=&\sum\limits_{i=1}^4 I^i_{1}.
\end{split}
\end{equation}
For $I^2_{1}$, since $\ome\in H^{s,\gamma}_{\si,\de}$, we have
\begin{equation}\label{K111}
\de\leq(1+y)^{\si}\ome\leq\de^{-1},\quad \left|(1+y)^{\si+1}\py\ome \right|\leq\de^{-1},
\end{equation}
then, for  $n\in(1,\frac{7}{3})$,
\begin{equation}\label{K11-1}
\left|\ome^{\frac{n-3}{2}}\py\ome \right|\leq \de^{-\frac{|n-3|}{2}}(1+y)^{-\si\frac{n-3}{2}}
\de^{-1}(1+y)^{-(\si+1)}=\de^{-1-\frac{|n-3|}{2}}(1+y)^{-\si\frac{n-1}{2}-1}\leq C_\de.
\end{equation}
Applying \eqref{K11-1}, we obtain
\begin{equation}\label{K125}
\begin{split}
|I^2_1|&\leq n(n-1)\|(1+y)^{\ga+\al_2}\ome^{\frac{n-1}{2}}\py D^{\al}\ome\|_{L^2}
\|\ome^{\frac{n-3}{2}}\py\ome \|_{L^\infty}
\|(1+y)^{\ga+\al_2}  D^{\al}\ome\|_{L^2}\\
&\leq \frac{n}{10}\|(1+y)^{\ga+\al_2}\ome^{\frac{n-1}{2}}\py D^{\al}\ome\|_{L^2}^2
+C_{n,\delta}\|\ome\|_{\Hg}^2.
\end{split}
\end{equation}
For $I^3_{1}$, similarly, with $\ome\in H^{s,\gamma}_{\si,\de}$, we can obtain
\begin{equation}\label{K32}
|I^3_1|\leq \frac{n}{10}\|(1+y)^{\ga+\al_2}\ome^{\frac{n-1}{2}}\py D^{\al}\ome\|_{L^2}^2
+C_{n,\delta}\|\ome\|_{\Hg}^2.
\end{equation}
Now, we turn to address the boundary $I^4_1$. We have the following estimate, which will be shown later.
\begin{equation}\label{K140}
\begin{split}
\sum_{\substack{|\al|\leq s \\ \al_1\leq s-1}}|I^4_1|
\leq &\frac{n}{10}\sum_{\substack{|\al|\leq s \\ \al_1\leq s-1}}\|(1+y)^{\ga+\al_2}\ome^{\frac{n-1}{2}}\py D^{\al}\ome\|_{L^2}^2
+\frac{n}{4}\|(1+y)^{\ga}\ome^{\frac{n-1}{2}}\py\gse\|_{L^2}^2\\
&+C_{s,\gamma,\sigma,n,\delta}\big((1+\|\px^s U\|_{L^2(\T)}+\|\ome\|_{\Hg})^{4(s+1)}
+\sum_{l=0}^{\frac{s}{2}}\|\partial_t^l\px^{s-2l+1}\pe\|_{L^\ty(\T)}^4
\big).
\end{split}
\end{equation}
Substituting all the estimates for $I^i_1$ into $I_1$  and summing over $\al$, we obtain
\begin{equation}\label{K1deguji}
\begin{split}
\sum_{\substack{|\al|\leq s \\ \al_1\leq s-1}}|I_1|\leq& -\frac{7n}{10}\|(1+y)^{\ga+\al_2}\ome^{\frac{n-1}{2}}\py D^{\al}\ome\|_{L^2}^2
+\frac{n}{4}\sum_{\substack{|\al|\leq s \\ \al_1\leq s-1}}\|(1+y)^{\ga}\ome^{\frac{n-1}{2}}\py\gse\|_{L^2}^2\\
&+C_{s,\gamma,\sigma,n,\delta}\big((1+\|\px^s U\|_{L^2(\T)}+\|\ome\|_{\Hg})^{4(s+1)}
+\sum_{l=0}^{\frac{s}{2}}\|\partial_t^l\px^{s-2l+1}\pe\|_{L^\ty(\T)}^4
\big).
\end{split}
\end{equation}
For $I_2$, by $\eqref{L3}_2$,  Sobolev embedding inequality and $\eqref{chabuduo}$, we obtain
\begin{equation}\label{vdeguji}
\begin{split}
\|(1+y)^{-1}\ve\|_{L^{\ty}}=
&\|(1+y)^{-1}\py^{-1}\px \ue\|_{L^{\ty}}
\leq\|\px \ue\|_{L^{\ty}}\\
\leq&\|\px(\ue-U)\|_{L^{\ty}}+\|\px U\|_{L^{\ty}(\T)}\\
\leq&C_{s,\ga,\si,\de}(\|\ome\|_{\Hg}+\|\px^s U\|_{L^{2}(\T)}).
\end{split}
\end{equation}
Hence, by integration by parts and \eqref{vdeguji}, we have
\begin{equation}\label{K2deguji}
\begin{split}
|I_2|&=(\ga+\al_2)\Big|\int_{\Om}(1+y)^{2\ga+2\al_2-1}\ve(D^{\al}\ome)^2 dxdy\Big|\\
&\leq (\ga+\al_2)\|(1+y)^{-1}\ve\|_{L^{\ty}}\|(1+y)^{\ga+\al_2}D^{\al}\ome\|_{L^2}^2\\
&\leq C_{s,\ga,\si,\de}(\|\ome\|_{\Hg}+\|\px^2 U\|_{L^{2}(\T)})\|\ome\|_{\Hg}^2.
\end{split}
\end{equation}
For $I_3$, $I_4$ and $I_5$, we have the following estimates, which will be shown later.
\begin{equation}\label{K3deguji}
\begin{split}
|I_3|\leq C_{s,\ga,\si,\de}(\|\ome\|_{\Hg}+\|\px^2 U\|_{L^{2}(\T)})\|\ome\|_{\Hg}^2,
\end{split}
\end{equation}
\begin{equation}\label{K4deguji}
\begin{split}
|I_4|\leq \frac{n}{10}\|(1+y)^{\ga+\al_2}\ome^{\frac{n-1}{2}}\py D^{\al}\ome\|_{L^2}^2
+C_{s,\ga,\si,n,\de}(1+
\|\px^{s} U\|_{L^2(\T)}+\|\ome\|_{H^{s, \gamma}_g})^s,
\end{split}
\end{equation}
and
\begin{equation}\label{K5deguji}
\begin{split}
|I_5|\leq \frac{n}{10}\|(1+y)^{\ga+\al_2}\ome^{\frac{n-1}{2}}\py D^{\al}\ome\|_{L^2}^2
+C_{s,\ga,\si,\de,n}
(1+\|\px^{s} U\|_{L^2(\T)}+\|\ome\|_{H^{s, \gamma}_g})^{s+1}.
\end{split}
\end{equation}

Substituting all estimates of $I_i$ into \eqref{neiji} and summing over $\al$ , we can establish that \eqref{normal-energy-estimate} holds.

\hfill $\square$

$Proof~of~\eqref{K3deguji}$: Using definition $I_3$ together with the identities $\py v_\epsilon=-\px u_\epsilon$ and $\ome=\py u_\epsilon$, the terms in $I_3$ can be classified into the following three types.
We write $e_1:=(1,0)$ and $e_2:=(0,1)$, and let $\eta\in \mathbb{N}$ and $\kappa,\theta\in \mathbb{N}^2$:\\
Type 1:
$$I_{31}:=\int_{\Om}(1+y)^{2\ga+2\al_2}D^{\al}\ome \px^\eta \ve D^\kappa\ome  dxdy,$$
where $1\leq \eta\leq s-1$ and $\eta e_1+\kappa=\alpha+e_2.$\\
Type 2:
$$I_{32}:=\int_{\Om}(1+y)^{2\ga+2\al_2}D^{\al}\ome \px^\eta \ue D^\kappa\ome  dxdy,$$
where $1\leq \eta\leq s$ and $\eta e_1+\kappa=\alpha+e_1.$\\
Type 3:
$$I_{33}:=\int_{\Om}(1+y)^{2\ga+2\al_2}D^{\al}\ome D^{\theta}\ome D^\kappa\ome  dxdy,$$
where $|\theta|\leq s-2$ and $\theta+\kappa=\alpha+e_1-e_2.$

For $I_{31}$, if $1\leq \eta\leq s-2$, similarly to the estimate in \eqref{vdeguji}, one can derive that
\begin{equation}\label{vxdeguji}
\begin{split}
\|(1+y)^{-1}\px^\eta \ve\|_{L^{\ty}}
\leq C_{s,\ga,\si,\de}(\|\ome\|_{\Hg}+\|\px^s U\|_{L^{2}(\T)}).
\end{split}
\end{equation}
Combined with the H\"{o}lder's inequality, it yields that
\begin{equation*}
\begin{split}
|I_{31}|&\leq \|(1+y)^{\ga+\al_2}D^{\al}\ome\|_{L^{2}}\|(1+y)^{-1}\px^\eta \ve\|_{L^{\ty}}
\|(1+y)^{\ga+\kappa_2}D^{\kappa}\ome\|_{L^{2}}\\
&\leq C_{s,\ga,\si,\de}(\|\ome\|_{\Hg}+\|\px^s U\|_{L^{2}(\T)})\|\ome\|_{\Hg}^2,
\end{split}
\end{equation*}
where we have used the fact that $\kappa_2=\alpha_2+1.$\\
By Lemma \ref{hardy} and $\eqref{chabuduo}$, for any $k\in[0,s-1]$, we obtain
\begin{equation}\label{vudeguji}
\begin{split}
\|\frac{\px^{k} \ve+y\px^{k+1}U}{1+y}\|_{L^{2}}\leq 2\|\px^{k+1}(\ue-U)\|_{L^{2}}
\leq C_{s,\ga,\si,\de}(\|\ome\|_{\Hg}+\|\px^s U\|_{L^{2}(\T)}).
\end{split}
\end{equation}
If $\eta=s-1$,  combining \eqref{vudeguji} with the Sobolev inequality, we obtain
\begin{equation*}
\begin{split}
|I_{31}|&\leq \|(1+y)^{\ga+\al_2}D^{\al}\ome\|_{L^{2}}
(\|\frac{\px^{s-1} \ve+y\px^{s}U}{1+y}\|_{L^{2}}+\|\px^{s}U\|_{L^{2}(\T)})
\|(1+y)^{\ga+\kappa_2}D^{\kappa}\ome\|_{L^{\ty}}\\
&\leq C_{s,\ga,\si,\de}(\|\ome\|_{\Hg}+\|\px^s U\|_{L^{2}(\T)})\|\ome\|_{\Hg}^2,
\end{split}
\end{equation*}
where we have also used the fact that $\kappa_2=\alpha_2+1.$ Therefore,
\begin{equation*}
\begin{split}
|I_{31}|\leq C_{s,\ga,\si,\de}(\|\ome\|_{\Hg}+\|\px^s U\|_{L^{2}(\T)})\|\ome\|_{\Hg}^2.
\end{split}
\end{equation*}

For $I_{32}$, from $\eta e_1+\kappa=\alpha+e_1 $, it is easy to see that $\kappa_2=\alpha_2$. If $1\leq \eta\leq s-1$, applying Lemma \ref{chazhi}, one can derive that
\begin{equation*}
\begin{split}
|I_{32}|&\leq \|(1+y)^{\ga+\al_2}D^{\al}\ome\|_{L^{2}}\|\px^\eta \ue\|_{L^\ty}
\|(1+y)^{\ga+\kappa_2}D^{\kappa}\ome\|_{L^{2}}\\
&\leq C(\|\px^\eta (\ue-U)\|_{L^\ty}+\|\px^\eta U\|_{L^\ty(\T)})\|\ome\|_{\Hg}^2\\
&\leq C(\|\px^\eta (\ue-U)\|_{L^2}+\|\px^{\eta+1} (\ue-U)\|_{L^2}+\|\px^\eta \py \ome\|_{L^2}+\|\px^\eta U\|_{L^\ty(\T)})\|\ome\|_{\Hg}^2\\
&\leq C_{s,\ga,\si,\de}(\|\ome\|_{\Hg}+\|\px^s U\|_{L^{2}(\T)})\|\ome\|_{\Hg}^2.
\end{split}
\end{equation*}
If $\eta= s$, it is easy to see that $\kappa=(0,0)$ or $\kappa=(0,1)$ , and thus
\begin{equation*}
\begin{split}
|I_{32}|&\leq \|(1+y)^{\ga+\al_2}D^{\al}\ome\|_{L^{2}}\|\px^s u\|_{L^2}
\|(1+y)^{\ga+\kappa_2}D^{\kappa}\ome\|_{L^{\ty}}\\
&\leq C(\|\px^s (\ue-U)\|_{L^2}+\|\px^\eta U\|_{L^2(\T)})\|\ome\|_{\Hg}^2\\
&\leq C_{s,\ga,\si,\de}(\|\ome\|_{\Hg}+\|\px^s U\|_{L^{2}(\T)})\|\ome\|_{\Hg}^2.
\end{split}
\end{equation*}

For $I_{33}$, from $\theta+\kappa=\alpha+e_1-e_2 $, it is easy to see that $\theta_2+\kappa_2=\alpha_2-1$. Making use of the Sobolev inequality, we obtain
\begin{equation*}
\begin{split}
|I_{33}|&\leq \|(1+y)^{\ga+\al_2}D^{\al}\ome\|_{L^{2}}\|(1+y)^{1+\theta_2}D^{\theta}\ome\|_{L^\ty}
\|(1+y)^{\ga+\kappa_2}D^{\kappa}\ome\|_{L^{2}}\\
&\leq C_{s,\ga,\si,\de}\|\ome\|_{\Hg}^3.
\end{split}
\end{equation*}
Combining the estimates of $I_{31},I_{32}$ and $I_{33}$, we obtain \eqref{K3deguji}.

\hfill $\square$

$Proof~of~\eqref{K4deguji}$:
$$I_4=n\sum_{0<\beta\leq\al}\binom{\al}{\beta}
\int_{\Om}(1+y)^{2\ga+2\al_2}D^{\al}\ome
D^{\beta}\ome^{n-1}\py^2 D^{\al-\beta}\ome dxdy$$

When $s=4$, the result can be obtained by direct computation. Therefore, we only provide the proof for $s\geq5$ here, which requires the use of Lemma \ref{faa} and Proposition \ref{shuaijian}.

For $|\beta|=1$, using H\"{o}lder's inequality yields that
\begin{equation}\label{b1}
\begin{split}
&\Big|\int_{\Om}(1+y)^{2\ga+2\al_2}D^{\al}\ome
D^{\beta}\ome^{n-1}\py^2 D^{\al-\beta}\ome dxdy\Big|\\
=&(n-1)\Big|\int_{\Om}(1+y)^{2\ga+2\al_2}D^{\al}\ome
\cdot\ome^{n-2}D^{\beta}\ome\py^2 D^{\al-\beta}\ome dxdy\Big|\\
\leq &C\|(1+y)^{-1+\beta_2}\ome^{\frac{n-3}{2}}D^{\beta}\ome\|_{L^{\ty}}
\|(1+y)^{\ga+\al_2}D^{\al}\ome\|_{L^2}
\|(1+y)^{\ga+\al_2+1-\beta_2}\ome^{\frac{n-1}{2}}\py D^{\al+e_2-\beta}\ome\|_{L^2}.
\end{split}
\end{equation}
Since $\ome\in H^{s,\gamma}_{\si,\de}$ and $n>1$, we have
\begin{equation*}
\|(1+y)^{-1+\beta_2}\ome^{\frac{n-3}{2}}D^{\beta}\ome\|_{L^{\ty}}
\leq C_\delta(1+y)^{-1+\beta_2-\sigma\frac{n-3}{2}-(\sigma+\beta_2)}
\leq C_\delta(1+y)^{-1-\sigma\frac{n-1}{2}}\leq C_\delta.
\end{equation*}
Hence
\begin{equation}\label{b2}
\begin{split}
&\Big|\int_{\Om}(1+y)^{2\ga+2\al_2}D^{\al}\ome
D^{\beta}\ome^{n-1}\py^2 D^{\al-\beta}\ome dxdy\Big|\\
\leq&
\frac{n}{10}\|(1+y)^{\ga+\al_2+1-\beta_2}\ome^{\frac{n-1}{2}}\py D^{\al+e_2-\beta}\ome\|_{L^2}^2
+C_{s,\ga,\si,\de,n} \|\ome\|_{\Hg}^{2}
\end{split}
\end{equation}

If $1<|\beta|\leq s-2$, we can deduce
\begin{equation}\label{b3}
\begin{split}
&\Big|\int_{\Om}(1+y)^{2\ga+2\al_2}D^{\al}\ome
D^{\beta}\ome^{n-1}\py^2 D^{\al-\beta}\ome dxdy\Big|\\
\leq &C\|(1+y)^{-2+\beta_2}D^{\beta}\ome^{n-1}\|_{L^{\ty}}
\|(1+y)^{\ga+\al_2}D^{\al}\ome\|_{L^2}
\|(1+y)^{\ga+\al_2+2-\beta_2}\py^2 D^{\al-\beta}\ome\|_{L^2}.
\end{split}
\end{equation}
By using Lemma \ref{faa}, we have
\begin{equation}\label{K41}
D^{\beta}\ome^{n-1} = \sum_{r=1}^{|\beta|} (n-1)^{\underline{r}} \, \ome^{n-1-r} \sum_{\substack{\beta^1 + \cdots + \beta^r = \beta \\ |\beta^i| \ge 1}} \frac{\beta!}{r!} \prod_{i=1}^r \frac{D^{\beta^i} \ome}{\beta^i!}.
\end{equation}
Let $r_0$ denote the number of $\beta^i$ such that $|\beta^i|=1$, then by $\ome\in H^{s,\gamma}_{\si,\de}$, \eqref{shuaijian5} and $\beta^1 + \cdots + \beta^r = \beta$, we have
\begin{equation}\label{K43}
\begin{split}
\|(1+y)^{-2+\beta_2}\ome^{n-1-r}\prod_{i=1}^r D^{\beta^i} \ome\|_{L^{\ty}}
\leq C(1+\|\ome\|_{\Hg})^r
\|(1+y)^{c_r}\|_{L^{\ty}},
\end{split}
\end{equation}
where
\begin{equation}\label{K44}
\begin{split}
c_r=&-2-\si(n-1-r)-\si r_0-\frac{(s-2)\si(r-r_0)-2\ga(r-r_0)}{s-4}\\
&+\frac{(|\beta|-r_0)(\si-\ga)}{s-4}.
\end{split}
\end{equation}

Case 1: if $r=r_0$, then it must be that $r=r_0=|\beta|$. In this time, we have
\begin{equation}\label{K45}
\begin{split}
c_r=-2-\si(n-1)<0.
\end{split}
\end{equation}

Case 2: if $r-r_0\geq1$, using $\si\leq \ga+4/3$ ,then we have
\begin{equation}\label{K46}
\begin{split}
c_r&=-2-\si(n-1)+\frac{(|\beta|-r)(\si-\ga)}{s-4}
-\frac{(\si-\ga)(r-r_0)}{s-4}\\
&\leq-2-\si(n-1)+\frac{(|\beta|-r)(\si-\ga)}{s-4}
-\frac{\si-\ga}{s-4}\\
&\leq-2-\si(n-1)+\frac{(s-3)(\si-\ga)}{s-4}
-\frac{\si-\ga}{s-4}\\
&=\si-\ga-2-\si(n-1)\\
&\leq\si-\ga-2\\
&\leq0.
\end{split}
\end{equation}
By combining \eqref{b3}-\eqref{K46} and \eqref{chabuduo}, we get that for $1<|\beta|\leq s-2$, there holds
\begin{equation}\label{xinhaolei}
\begin{split}
&\Big|\int_{\Om}(1+y)^{2\ga+2\al_2}D^{\al}\ome
D^{\beta}\ome^{n-1}\py^2 D^{\al-\beta}\ome dxdy\Big|\\
\leq& C(1+\|\ome\|_{H^{s, \gamma}})^s\\
\leq&  C_{s,\ga,\si,\de}(1+
\|\px^{s} U\|_{L^2(\T)}+\|\ome\|_{H^{s, \gamma}_g})^s.
\end{split}
\end{equation}

Now for $|\beta|=s-1$, when all $\beta^i$ in \eqref{K41} satisfy $|\beta^i|\leq s-2$, we can see that \eqref{xinhaolei} still holds. When there exists $|\beta^1|=s-1$, we have $r=1$ and $\beta^1=\beta$. Thus it holds
\begin{equation*}
\begin{split}
&\|(1+y)^{\ga+\al_2}\ome^{n-1-r}\prod_{i=1}^r D^{\beta^i} \ome
\py^2 D^{\al-\beta}\ome
\|_{L^2}
=\|(1+y)^{\ga+\al_2}\ome^{n-2}D^{\beta} \ome
\py^2 D^{\al-\beta}\ome
\|_{L^2}\\
\leq&\|(1+y)^{-\ga-2}\ome^{n-2}\|_{L^{\ty}}
\|(1+y)^{\ga+\beta_2}D^{\beta}\ome\|_{L^{\ty}_x L^{2}_y}
\|(1+y)^{\ga+\al_2-\beta_2+2}D^{\al-\beta+2e_2}\ome\|_{L^2_x L^{\ty}_y}.
\end{split}
\end{equation*}
From $\ome\in H^{s,\gamma}_{\si,\de}$ and $\si\leq \ga+4/3$, we obtain
\begin{equation*}
\begin{split}
\|(1+y)^{-\ga-2}\ome^{n-2}\|_{L^{\ty}}
\leq C_\delta (1+y)^{-\ga-2-\sigma(n-2)}\leq C_\delta.
\end{split}
\end{equation*}
Together with the Sobolev inequality, this yields
\begin{equation*}
\begin{split}
&\|(1+y)^{\ga+\al_2}\ome^{n-1-r}\prod_{i=1}^r D^{\beta^i} \ome
\py^2 D^{\al-\beta}\ome
\|_{L^2}
\leq C_{\de}\|\ome\|_{\Hg}^2.
\end{split}
\end{equation*}
Next for $|\beta|=s$, we can estimate by using the same method. In summary, we deduce that \eqref{K4deguji} holds by applying Cauchy's inequality.

\hfill $\square$

$Proof~of~\eqref{K5deguji}$: Similar to the proof of $\eqref{K4deguji}$, we also provide the proof here only for the case $s\geq5$.
A direct computation yields
\begin{equation}\label{K51a}
\begin{split}
I_5=&n(n-1)\sum_{0\leq\beta\leq\al}\binom{\al}{\beta}
\int_{\Om}(1+y)^{2\ga+2\al_2}D^{\al}\ome
D^{\beta}\ome^{n-2}D^{\al-\beta}|\py\ome|^2 dxdy\\
=&n(n-1)\int_{\Om}(1+y)^{2\ga+2\al_2}D^{\al}\ome \ome^{n-2}D^{\al}|\py\ome|^2 dxdy\\
&+n(n-1)\sum_{0<\beta\leq\al}\binom{\al}{\beta}
\int_{\Om}(1+y)^{2\ga+2\al_2}D^{\al}\ome
D^{\beta}\ome^{n-2}D^{\al-\beta}|\py\ome|^2 dxdy\\
=&2n(n-1)\int_{\Om}(1+y)^{2\ga+2\al_2}D^{\al}\ome \ome^{n-2}\py\ome D^{\al}\py\ome dxdy\\
&+n(n-1)\sum_{0<\beta<\al}\binom{\al}{\beta}
\int_{\Om}(1+y)^{2\ga+2\al_2}D^{\al}\ome \ome^{n-2}D^{\beta}\py\ome D^{\al-\beta}\py\ome  dxdy\\
&+n(n-1)\sum_{0<\beta\leq\al}\binom{\al}{\beta}
\int_{\Om}(1+y)^{2\ga+2\al_2}D^{\al}\ome
D^{\beta}\ome^{n-2}D^{\al-\beta}|\py\ome|^2 dxdy\\
:=& I_{51}+I_{52}+I_{53}.
\end{split}
\end{equation}

For $I_{51}$, analogously to \eqref{K125}, one can deduce
\begin{equation}\label{I51}
\begin{split}
|I_{51}|\leq \frac{n}{24}\|(1+y)^{\ga+\al_2}\ome^{\frac{n-1}{2}}\py D^{\al}\ome\|_{L^2}^2
+C_{n,\delta}\|\ome\|_{\Hg}^2.
\end{split}
\end{equation}

For $I_{52}$, if $1\leq|\beta|<s-1$, we know that at least one of $|\beta+e_2|$ and $|\al-\beta+e_2|$ does not exceed $[\frac{s}{2}]$, and we may assume $|\beta+e_2|\leq[\frac{s}{2}]$.
From \eqref{shuaijian5}, $\ome\in H^{s,\gamma}_{\si,\de}$ and $\si\leq \ga+4/3$, we obtain
\begin{equation}\label{yww}
\begin{split}
&|(1+y)^{-\beta_2-1}\ome^{n-2}D^{\beta}\py\ome|\\
\leq &C_{s,\ga,\si,\delta}(1+y)^{\beta_2-1-\sigma(n-2)}(1+y)^{-\frac{\si+\ga}{2}-1-\beta_2}(1+\|\ome\|_{H^{s,\gamma}})\\
=&C_{s,\ga,\si,\delta}(1+y)^{-\sigma(n-1)+\frac{\sigma-\ga}{2}-2}(1+\|\ome\|_{H^{s,\gamma}})\\
\leq&C_{s,\ga,\si,\delta}(1+\|\ome\|_{H^{s,\gamma}}).
\end{split}
\end{equation}
Then, by \eqref{yww}, it follows that
\begin{equation*}
\begin{split}
&\left|\int_{\Om}(1+y)^{2\ga+2\al_2}D^{\al}\ome \ome^{n-2}D^{\beta}\py\ome D^{\al-\beta}\py\ome  dxdy \right|\\
\leq
&C\|(1+y)^{\ga+\al_2}D^{\al}\ome\|_{L^2}
\|(1+y)^{\ga+\al_2-\beta_2+1}D^{\al-\beta}\py\ome\|_{L^2}
\|(1+y)^{\beta_2-1}\ome^{n-2}D^{\beta}\py\ome\|_{L^\infty}\\
\leq
&C_{s,\ga,\si,\delta}(1+\|\ome\|_{H^{s,\gamma}})\|\ome\|_{\Hg}^2\\
\leq&  C_{s,\ga,\si,\de}(1+
\|\px^{s} U\|_{L^2(\T)}+\|\ome\|_{H^{s, \gamma}_g})^3.
\end{split}
\end{equation*}
Moreover, $|\beta|=s-1$, it is easy to see that $|\alpha-\beta+e_2|$=2. Combined with $\ome\in H^{s,\gamma}_{\si,\de}$, one can deduce
\begin{equation*}
\begin{split}
&\left|(1+y)^{\al_2-\beta_2-1}\ome^{n-2}D^{\al-\beta}\py\ome\right|\\
\leq& C_\de (1+y)^{\al_2-\beta_2-1-\sigma-\al_2+\beta_2-1-\si(n-2)}\\
=&C_\de (1+y)^{-2-\si(n-1)}\leq C_\de.
\end{split}
\end{equation*}
Then,
\begin{equation*}
\begin{split}
&\left|\int_{\Om}(1+y)^{2\ga+2\al_2}D^{\al}\ome \ome^{n-2}D^{\beta}\py\ome D^{\al-\beta}\py\ome  dxdy \right|\\
\leq
&C\|(1+y)^{\ga+\al_2}D^{\al}\ome\|_{L^2}
\|(1+y)^{\ga+\beta_2+1}D^{\beta}\py\ome\|_{L^2}
\|(1+y)^{\al_2-\beta_2-1}\ome^{n-2}D^{\al-\beta}\py\ome\|_{L^\infty}\\
\leq
&C_{\delta}\|\ome\|_{\Hg}^2.
\end{split}
\end{equation*}
Therefore,
\begin{equation}\label{I52}
\begin{split}
|I_{52}|\leq C_{s,\ga,\si,\de}(1+
\|\px^{s} U\|_{L^2(\T)}+\|\ome\|_{H^{s, \gamma}_g})^3.
\end{split}
\end{equation}

For $I_{53}$, by the Leibniz formula, it is easy to obtain
\begin{equation}\label{K51b}
\begin{split}
D^{\al-\beta}|\py\ome|^2=
\sum_{0\leq\tilde{\beta}\leq\al-\beta}
\binom{\al-\beta}{\tilde{\beta}}
D^{\tilde{\beta}}\py\ome
D^{\al-\beta-\tilde{\beta}}\py\ome.
\end{split}
\end{equation}
When $|\beta|=1$,
 if $0\leq|\tilde{\beta}|\leq[\frac{s}{2}]$, by the Sobolev inequality we obtain
\begin{equation*}
\begin{split}
&\left|\int_{\Om}(1+y)^{2\ga+2\al_2}D^{\al}\ome
D^{\beta}\ome^{n-2} D^{\tilde{\beta}}\py\ome
D^{\al-\beta-\tilde{\beta}}\py\ome dxdy \right| \\
\leq & C\|(1+y)^{\ga+\al_2}D^{\al}\ome\|_{L^2}
\|(1+y)^{\ga+\tilde{\beta}_2+1}D^{\tilde{\beta}}\py\ome \|_{L^\infty}\\
&\cdot\|(1+y)^{\ga+\al_2-\beta_2-\tilde{\beta}_2+1}D^{\al-\beta-\tilde{\beta}}\py\ome\|_{L^2}
\|(1+y)^{-\ga+\beta_2-2}D^{\beta}\ome^{n-2}\|_{L^\infty}\\
\leq &C \|(1+y)^{-\ga+\beta_2-2}D^{\beta}\ome^{n-2}\|_{L^\infty}\|\ome\|_{\Hg}^3.
\end{split}
\end{equation*}
Moreover, from $\ome\in H^{s,\gamma}_{\si,\de}$ and  $\si\leq \ga+4/3$, we have
\begin{equation*}
\begin{split}
|(1+y)^{-\ga+\beta_2-2}D^{\beta}\ome^{n-2}|
\leq & C_\de(1+y)^{-\ga+\beta_2-2}(1+y)^{-\si(n-2)-\beta_2}\\
=&C_\de(1+y)^{-\si(n-1)+\si-\ga-2}
\leq  C_\de,
\end{split}
\end{equation*}
and
\begin{equation*}
\begin{split}
\left|\int_{\Om}(1+y)^{2\ga+2\al_2}D^{\al}\ome
D^{\beta}\ome^{n-2} D^{\tilde{\beta}}\py\ome
D^{\al-\beta-\tilde{\beta}}\py\ome dxdy \right|
\leq C_\de\|\ome\|_{\Hg}^3.
\end{split}
\end{equation*}
If $[\frac{s}{2}]<|\tilde{\beta}|\leq|\al-\beta|$, the estimate is analogous. Therefore, when $|\beta|=1$, one can obtain
\begin{equation*}
\begin{split}
|I_{53}|
\leq C_\de\|\ome\|_{\Hg}^3.
\end{split}
\end{equation*}
When $1<|\beta|\leq s-2$, we know that at least one of $|\tilde{\beta}+e_2|$ and $|\al-\beta-\tilde{\beta}+e_2|$ does not exceed $[\frac{s}{2}]$, and we may assume $|\tilde{\beta}+e_2|\leq[\frac{s}{2}]$. In this time, we can obtain by \eqref{shuaijian5},
\begin{equation}\label{haoleia}
\begin{split}
|D^{\tilde{\beta}}\py\ome|
\leq& C(1+\|\ome\|_{H^{s, \gamma}}) (1+y)^{-\frac{\si+\ga}{2}-1-\tilde{\beta}_2}\\
\leq& C_{s,\ga,\si,\de}(1+
\|\px^{s} U\|_{L^2(\T)}+\|\ome\|_{H^{s, \gamma}_g})(1+y)^{-\frac{\si+\ga}{2}-1-\tilde{\beta}_2}.
\end{split}
\end{equation}
Therefore,
\begin{equation*}
\begin{split}
&\left|\int_{\Om}(1+y)^{2\ga+2\al_2}D^{\al}\ome
D^{\beta}\ome^{n-2} D^{\tilde{\beta}}\py\ome
D^{\al-\beta-\tilde{\beta}}\py\ome dxdy \right| \\
\leq & C_{s,\ga,\si,\de}\|(1+y)^{\ga+\al_2}D^{\al}\ome\|_{L^2}
\|(1+y)^{\ga+\al_2-\beta_2-\tilde{\beta}_2+1}D^{\al-\beta-\tilde{\beta}}\py\ome\|_{L^2}\\
&\cdot\|(1+y)^{-\frac{\si+\ga}{2}-2+\beta_2}D^{\beta}\ome^{n-2}\|_{L^\infty}
(1+\|\px^{s} U\|_{L^2(\T)}+\|\ome\|_{H^{s, \gamma}_g})\\
\leq &C_{s,\ga,\si,\de} \|(1+y)^{-\frac{\si+\ga}{2}-2+\beta_2}D^{\beta}\ome^{n-2}\|_{L^\infty}
(1+\|\px^{s} U\|_{L^2(\T)}+\|\ome\|_{H^{s, \gamma}_g})^3.
\end{split}
\end{equation*}
On the other hand, similar to \eqref{K41}-\eqref{K46}, we obtain
\begin{equation}\label{K410}
D^{\beta}\ome^{n-2} = \sum_{r=1}^{|\beta|} (n-2)^{\underline{r}} \, \ome^{n-2-r} \sum_{\substack{\beta^1 + \cdots + \beta^r = \beta \\ |\beta^i| \ge 1}} \frac{\beta!}{r!} \prod_{i=1}^r \frac{D^{\beta^i} \ome}{\beta^i!}.
\end{equation}
And
\begin{equation}\label{K430}
\begin{split}
&\|(1+y)^{-\frac{\si+\ga}{2}-2+\beta_2}\ome^{n-2-r}\prod_{i=1}^r D^{\beta^i} \ome\|_{L^{\ty}}\\
\leq &C_{s,\ga,\si,\de}(1+\|\ome\|_{H^{s, \gamma}})^r
\|(1+y)^{d_r}\|_{L^{\ty}}\\
\leq &C_{s,\ga,\si,\de}
(1+\|\px^{s} U\|_{L^2(\T)}+\|\ome\|_{H^{s, \gamma}_g})^r
\|(1+y)^{d_r}\|_{L^{\ty}},
\end{split}
\end{equation}
where
\begin{equation}\label{K440}
\begin{split}
d_r=&-\frac{\si+\ga}{2}-2-\si(n-2-r)-\si r_0-\frac{(s-2)\si(r-r_0)-2\ga(r-r_0)}{s-4}\\
&+\frac{(|\beta|-r_0)(\si-\ga)}{s-4},
\end{split}
\end{equation}
and $r_0$ denote the number of $\beta^i$ such that $|\beta^i|=1$.

Case 1: if $r=r_0$, then it must be that $r=r_0=|\beta|$. In this time, by $\si\leq \ga+\frac{4}{3}$, we have
\begin{equation}\label{K450}
\begin{split}
d_r=-\frac{\si+\ga}{2}-2-\si(n-2)=-\si(n-1)+\frac{\si-\ga}{2}-2<0.
\end{split}
\end{equation}

Case 2: if $r-r_0\geq1$, by $n>1$ and $\si\leq \ga+\frac{4}{3}$, then we have
\begin{equation}\label{K460}
\begin{split}
c_r&=-\frac{\si+\ga}{2}-2-\si(n-2)
+\frac{(|\beta|-r)(\si-\ga)}{s-4}
-\frac{(\si-\ga)(r-r_0)}{s-4}\\
&\leq-\frac{\si+\ga}{2}-2-\si(n-2)
+\frac{(|\beta|-r)(\si-\ga)}{s-4}
-\frac{\si-\ga}{s-4}\\
&\leq-\frac{\si+\ga}{2}-2-\si(n-2)
+\frac{(s-3)(\si-\ga)}{s-4}
-\frac{\si-\ga}{s-4}\\
&=\frac{3}{2}(\si-\ga-\frac{4}{3})-\si(n-1)\\
&\leq0.
\end{split}
\end{equation}

Then, by combining \eqref{K410}-\eqref{K460}, we have for $1<|\beta|\leq s-2$, there holds
\begin{equation*}
\begin{split}
&\left|\int_{\Om}(1+y)^{2\ga+2\al_2}D^{\al}\ome
D^{\beta}\ome^{n-2} D^{\tilde{\beta}}\py\ome
D^{\al-\beta-\tilde{\beta}}\py\ome dxdy \right| \\
\leq &C_{s,\ga,\si,\de}
(1+\|\px^{s} U\|_{L^2(\T)}+\|\ome\|_{H^{s, \gamma}_g})^{s+1}.
\end{split}
\end{equation*}
Finally, for $|\beta|=s-1$ or $s$, one may refer to the estimate of $I_4$; for simplicity, we omit it.
In summary, one can deduce that
\begin{equation}\label{xinhaolei2}
\begin{split}
|I_{53}|
\leq C_{s,\ga,\si,\de,n}
(1+\|\px^{s} U\|_{L^2(\T)}+\|\ome\|_{H^{s, \gamma}_g})^{s+1}.
\end{split}
\end{equation}
Combined with \eqref{I51}, \eqref{I52} and \eqref{xinhaolei2}, \eqref{K5deguji} follows.

\hfill $\square$

$Proof~of~\eqref{K140}$: We consider the following two cases.

Case 1. $|\al|\leq s-1$. Applying the trace estimate
\begin{equation}\label{jiguji}
\begin{split}
\left|\int_{\T} f dx|_{y=0}\right|\leq 2\Big\{\int_0^1\int_{\T} |f| dxdy+\int_0^1\int_{\T} |\py f| dxdy\Big\}
\end{split}
\end{equation}
to control the boundary integral yields
\begin{equation}\label{K141}
\begin{split}
|I^4_1|
&=n\Big|\int_{\T}(\ome^{n-1} D^{\al}\ome
\py D^{\al}\ome)|_{y=0}dx\Big|\\
&\leq
C_n \Big\{\int_0^1\int_{\T} |\ome^{n-1} D^{\al}\ome
\py D^{\al}\ome| dxdy
+\int_0^1\int_{\T} |\py (\ome^{n-1} D^{\al}\ome
\py D^{\al}\ome)| dxdy\Big\}\\
&\leq
C_n \Big\{\int_0^1\int_{\T} |\ome^{n-1} D^{\al}\ome
\py D^{\al}\ome| dxdy
+\int_0^1\int_{\T} |\py\ome \ome^{n-2} D^{\al}\ome
\py D^{\al}\ome| dxdy\\
&\quad\quad\quad +\int_0^1\int_{\T} | \ome^{n-1} \py D^{\al}\ome
\py D^{\al}\ome| dxdy
+\int_0^1\int_{\T} |\ome^{n-1} D^{\al}\ome
\py^2 D^{\al}\ome)| dxdy
\Big\}\\
&:=I^4_{11}+I^4_{12}+I^4_{13}+I^4_{14}.
\end{split}
\end{equation}
Using $\ome\in H^{s,\gamma}_{\si,\de}$ and $n>1$ yields
\begin{equation}\label{K142}
\begin{split}
&|\ome^{n-1}|\leq C_\de(1+y)^{-\si(n-1)} \leq C_\de,\\
&|\py\ome \ome^{n-2}|\leq C_\de(1+y)^{-\si-1-\si(n-2)}=C_\de(1+y)^{-1-\si(n-1)}\leq C_\de.
\end{split}
\end{equation}
By \eqref{K142}, one can deduce
\begin{equation}\label{I41113}
\begin{split}
|I^4_{11}|+|I^4_{12}|+|I^4_{13}|
\leq
C_{\de,n}\|\py D^{\al}\ome\|_{L^2}(\| D^{\al}\ome\|_{L^2}+\|\py D^{\al}\ome\|_{L^2})
\leq
C_{\de,n}\|\ome\|_{\Hg}^2.
\end{split}
\end{equation}
For $I^4_{14}$, applying the H\"{o}lder inequality and \eqref{K142} yields
\begin{equation}\label{I414}
\begin{split}
|I^4_{14}|
\leq
\frac{n}{80}\|(1+y)^{\ga+\al_2}\ome^{\frac{n-1}{2}}\py^2 D^{\al}\ome\|_{L^2}^2
+C_{\delta,n}\|\ome\|_{\Hg}^2.
\end{split}
\end{equation}
Therefore, when $|\al|\leq s-1$, one obtains
\begin{equation}\label{I41}
\begin{split}
|I^4_{1}|
\leq
\frac{n}{80}\|(1+y)^{\ga+\al_2}\ome^{\frac{n-1}{2}}\py^2 D^{\al}\ome\|_{L^2}^2
+C_{\delta,n}\|\ome\|_{\Hg}^2.
\end{split}
\end{equation}

Case 2. $|\al|=s$. In this case, the order of differentiation of the boundary term $\py D^{\al}\ome|_{y=0}$ is too high, and therefore one cannot apply the trace estimate \eqref{jiguji} to bound the boundary integral $I^4_1$. In order to make use of \eqref{jiguji}, it is necessary to reduce the order of differentiation of the singular term $\py D^{\al}\ome|_{y=0}$. To this end, we need to derive precise boundary conditions, as shown below:

\begin{Lemma}\label{bianjiejiangjie}
At $y=0$,
\begin{equation}\label{yijie}
n\ome^{n-1}\py\ome|_{y=0}=\px \pe.
\end{equation}
For any $0\le k_1\le s-2k_2$ and $1\le k_2\le \frac{s}{2}$, the following identity holds
\begin{equation}\label{gaojie}
\begin{split}
&\px^{k_1}(\ome^{n-1}\py^{2k_2+1}\ome)|_{y=0}\\
=
&\frac{1}{n^{k_2+1}}\ome^{k_2(1-n)}|_{y=0}(\pt-\epsilon^2\px^2)^{k_2}\px^{k_1+1} \pe\\
&+\sum_{l=0}^{k_2}\epsilon^{2l} \sum_{j=1}^{k_1+2k_2+1}\sum_{\rho\in A_{k_{1},k_{2}}^j}
Q_{k_{1},k_{2},\rho}^l(\ome; \mathcal{Q}_\epsilon)
\prod_{i=1}^{j}D^{\rho^i}\ome\Big|_{y=0}.
\end{split}
\end{equation}
Here
$$\mathcal{Q}_\epsilon:=\bigl\{(\pt-\epsilon^2\px^2)^a\px^{b+1}\pe:
a,b\in\mathbb N,\ 2a+b+1\leq k_1+2k_2\bigr\}$$
and
$$A_{k_1,k_2}^j:=\{\rho:=(\rho^1,\rho^2,\cdots,\rho^j)\in \mathbb{N}^{2j};1\leq|\rho^i|=\rho^i_1+\rho^i_2\leq k_1+2k_2;
\sum\limits_{i=1}^{j}|\rho^i|\leq k_1+2k_2+1\}.$$
For each $l$ and $\rho$, $Q_{k_1,k_2,\rho}^{\ell}(\ome;\mathcal{Q}_\epsilon)$ is a polynomial in the lower order derivatives of $\pe$ listed in  $\mathcal{Q}_\epsilon$, with coefficients given by finite linear combinations of real powers of $\ome$.  If a monomial in $Q_{k_1,k_2,\rho}^{\ell}$ contains the pressure factors $(\pt-\epsilon^2\px^2)^{a_\nu}\px^{b_\nu+1}\pe$, $1\leq\nu\leq M$, then
\[
\sum_{i=1}^j|\rho^i|+
\sum_{\nu=1}^{M}(2a_\nu+b_\nu+1)\leq k_1+2k_2+1.
\]
In particular, the last term on the right-hand side of \eqref{gaojie} contains no term that is linear in
$(\pt-\epsilon^2\px^2)^{k_2}\px^{k_1+1}\pe$.
\end{Lemma}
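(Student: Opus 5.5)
The plan is to derive \eqref{yijie} directly and then obtain \eqref{gaojie} by induction on $k_2$, trading two normal derivatives at the wall for one application of $\pt-\epsilon^2\px^2$.

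\textbf{The identity \eqref{yijie}.} We evaluate the momentum equation of \eqref{feiniu2-1} at $y=0$. Since $\ue|_{y=0}=\ve|_{y=0}=0$ for all $t$ and $x$, we also have $\pt\ue|_{y=0}=\px^2\ue|_{y=0}=0$. The equation then reduces to $n\ome^{n-1}\py\ome|_{y=0}=\px\pe$, which is \eqref{yijie} (and also the boundary condition in \eqref{wodu}).

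\textbf{Base case $k_2=1$.} Rewrite the vorticity equation \eqref{wodu} as
\[
\py(\ome^{n-1}\py\ome)=\frac1n\big(\pt\ome+\ue\px\ome+\ve\py\ome-\epsilon^2\px^2\ome\big)+\text{l.o.t.}
\]
Apply $\py$ once more and evaluate at $y=0$. The transport terms vanish there, except for those in which $\py$ falls on $\ue$ or $\ve$; these give $\ome\px\ome$ and $-\px\ue\,\py\ome=0$. The time and tangential terms become $(\pt-\epsilon^2\px^2)\py\ome|_{y=0}$. Now use \eqref{yijie} in the form
\[
\py\ome|_{y=0}=\frac1n\,\ome^{1-n}\px\pe\big|_{y=0}.
\]
Applying $\pt-\epsilon^2\px^2$ to this, the derivative falls either on $\px\pe$, producing $\frac1n\ome^{1-n}(\pt-\epsilon^2\px^2)\px\pe$, or on the power $\ome^{1-n}$. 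In the latter case $\pt\ome|_{y=0}$ is replaced through \eqref{wodu} by normal and tangential derivatives of $\ome$ at the wall. Expanding $\py^2(\ome^{n-1}\py\ome)$ isolates $\ome^{n-1}\py^3\ome$, and every other term is a product of lower-order derivatives of $\ome$, with real-power coefficients, times pressure factors from $\mathcal Q_\epsilon$. Dividing by $n$ gives the leading coefficient $n^{-2}\ome^{1-n}$.

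\textbf{Inductive step.} Suppose \eqref{gaojie} holds up to $k_2$. Differentiate \eqref{wodu} $2k_2+1$ times in $y$ and $k_1$ times in $x$, evaluate at $y=0$, and solve for the top term $\ome^{n-1}\py^{2k_2+3}\ome$. The remaining terms fall into three groups.
\begin{itemize}
\item The factor $(\pt-\epsilon^2\px^2)\px^{k_1}\py^{2k_2+1}\ome|_{y=0}$. Write it as $(\pt-\epsilon^2\px^2)\px^{k_1}\big(\ome^{1-n}\cdot\ome^{n-1}\py^{2k_2+1}\ome\big)$ and insert the inductive identity. The leading term then becomes $n^{-(k_2+2)}\ome^{(k_2+1)(1-n)}(\pt-\epsilon^2\px^2)^{k_2+1}\px^{k_1+1}\pe$.
\item Terms in which $\pt$ or $\px$ hits a coefficient. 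Here time derivatives of $\ome$ are again eliminated through \eqref{wodu}; the time derivative is lowered in total order by a Faà di Bruno expansion (Lemma~\ref{faa}) of the powers of $\ome$.
\item Commutator terms from the nonlinear diffusion and from transport. At $y=0$ the transport contributes only derivatives $\py^j\ue$ and $\py^j\ve$ with $j\ge1$, which are derivatives of $\ome$ of order at most $k_1+2k_2+1$.
\end{itemize}

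Throughout, we track a weight: $|\rho|$ for a factor $D^\rho\ome$ and $2a+b+1$ for a factor $(\pt-\epsilon^2\px^2)^a\px^{b+1}\pe$. This gives the bound $\sum|\rho^i|+\sum(2a_\nu+b_\nu+1)\le k_1+2k_2+1$, using that the equation is homogeneous under $\pt\sim\py^2\sim2$, $\px\sim1$, with $\epsilon^2$ carrying a factor $\epsilon^{2l}$. The same bookkeeping shows the remainder contains no term linear in the top pressure derivative.

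\textbf{Main obstacle.} The hardest part will be this weight bookkeeping. In particular, one must show that eliminating $\pt\ome$ through the nonlinear diffusion $n\ome^{n-1}\py^2\ome+n(n-1)\ome^{n-2}|\py\ome|^2$ never regenerates a term of full weight. I would handle it by induction on the weight, proving that each elimination exchanges one $\pt$ (weight 2) for factors of total weight exactly 2.
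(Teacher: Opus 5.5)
Your proposal is correct and follows the same route as the argument the paper invokes by citation, namely the boundary-reduction induction of Masmoudi--Wong (Lemma~5.9) as adapted by Said and Wu: \eqref{yijie} comes from the momentum equation at $y=0$; then $\py^2$ at the wall is traded for $\pt-\epsilon^2\px^2$, $\pt\ome|_{y=0}$ is eliminated through \eqref{wodu}, and the weight ($|\rho|$ for each $\ome$-factor, $2a+b+1$ for each pressure factor) is tracked. Two cosmetic fixes: carry $\px^{k_1}$ through your base case, or start the induction at $k_2=0$ from $\px^{k_1}$ applied to \eqref{yijie}; and note that the equation is not homogeneous, since at $y=0$ the transport terms lose two units of weight rather than preserving it, which is harmless because the weight bound is only an upper bound.
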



The above lemma follows from a slight modification of the arguments in \cite[Lemma~5.9]{NM},  \cite[Lemma~6.4.2]{Said} and \cite[Lemma~4.2]{ZEW}. To keep the presentation concise, we omit the proof.

We now return to the boundary integral $I_1^4$ in Case~2.

$\bullet$ If $\al_2$ is even, we may assume without loss of generality that $\al_2=2 k_2$.  When $\al=(0,s)$, Lemma \ref{bianjiejiangjie} gives
\begin{equation}\label{I14-even-top}
\begin{split}
I^4_1
&=n\int_{\T}(\ome^{n-1} \py^{s+1}\ome
\py^{s}\ome)|_{y=0}dx\\
&=
\frac{1}{n^{k_2}}\int_{\T}\py^{s}\ome\ome^{(k_2-1)(1-n)}|_{y=0}(\pt-\epsilon^2\px^2)^{k_2}\px \pe dx\\
&\quad\quad+\sum_{l=0}^{k_2}\epsilon^{2l} \int_{\T}\sum_{j=1}^{s+1}\sum_{\rho\in A_{0,k_{2}}^j}\py^{s}\ome
Q_{0,k_{2},\rho}^l(\ome; \mathcal{Q}_\epsilon)
\prod_{i=1}^{j}D^{\rho^i}\ome\Big|_{y=0} dx \\
&:=J_{1}+J_{2}.
\end{split}
\end{equation}
Using H\"{o}lder's inequality and \eqref{jiguji}, we can deduce that
\begin{equation*}
\begin{split}
J_{1}
&\leq
C_{n,\delta}\|(\pt-\epsilon^2\px^2)^{k_2}\px \pe\|_{L^2(\T)}
\big(\int_0^1\|\py^{s}\ome\|_{L^2(\T)} dy+\int_0^1\|\py^{s+1}\ome\|_{L^2(\T)} dy\big)\\
&\leq
C_{n,\delta}\|(\pt-\epsilon^2\px^2)^{k_2}\px \pe\|_{L^2(\T)}
\big(\|\ome\|_{\Hg}+\|(1+y)^{\gamma+s}\ome^{\frac{n-1}{2}}\py^{s+1}\ome\|_{L^2} \\ &\quad\cdot\|(1+y)^{-(\gamma+s)}\ome^{\frac{1-n}{2}}\|_{L^\infty}\big).
\end{split}
\end{equation*}
Since $\ome\in H^{s,\gamma}_{\si,\de}$,~$\sigma\leq\gamma+\frac{4}{3}$ and $1<n\leq 3$, we have
\begin{equation}\label{fuzhibiao}
\|(1+y)^{-(\gamma+s)}\ome^{\frac{1-n}{2}}\|_{L^\infty}
\leq C_\delta(1+y)^{-(\gamma+s)-\sigma\frac{1-n}{2}}
\leq C_\delta(1+y)^{-\frac{3-n}{2}\gamma-s+\frac{2(n-1)}{3}}\leq C_\delta.
\end{equation}
Then,
\begin{equation}\label{J1}
\begin{split}
J_{1}
&\leq \frac{n}{80}\|(1+y)^{\gamma+s}\ome^{\frac{n-1}{2}}\py^{s+1}\ome\|_{L^2}^2+
C_{n,\delta}\big(\|\ome\|_{\Hg}^2+\|(\pt-\epsilon^2\px^2)^{k_2}\px \pe\|_{L^2(\T)}^2\big).
\end{split}
\end{equation}
It follows from \eqref{jiguji}, \eqref{fuzhibiao} and $1\leq|\rho^i|=\rho^i_1+\rho^i_2\leq k_1+2k_2$ that
\begin{equation}\label{J2-trace-product}
\begin{split}
&\int_{\T}\py^{s}\ome\prod_{i=1}^{j}D^{\rho^i}\ome|_{y=0} dx\\
\leq
& \int_0^1\int_{\T}\py^{s}\ome\prod_{i=1}^{j}D^{\rho^i}\ome dx dy
+\int_0^1\int_{\T}\py^{s+1}\ome\prod_{i=1}^{j}D^{\rho^i}\ome dx dy
+\int_0^1\int_{\T}\py^{s}\ome\py(\prod_{i=1}^{j}D^{\rho^i}\ome) dx dy\\
\leq
& C\|\py^{s}\ome\|_{L^2}\|\prod_{i=1}^{j}D^{\rho^i}\ome\|_{L^2}
+C\|(1+y)^{\gamma+s}\ome^{\frac{n-1}{2}}\py^{s+1}\ome\|_{L^2}
\|\prod_{i=1}^{j}D^{\rho^i}\ome\|_{L^2}
\|(1+y)^{-(\gamma+s)}\ome^{\frac{1-n}{2}}\|_{L^\infty}\\
&+C\|\py^{s}\ome\|_{L^2}\|\prod_{i=1}^{j-1}D^{\rho^i}\ome\|_{L^2}
\|(1+y)^{\gamma+s}\ome^{\frac{n-1}{2}}\py^{s+1}\ome\|_{L^2}
\|(1+y)^{-(\gamma+s)}\ome^{\frac{1-n}{2}}\|_{L^\infty}\\
\leq
& C\|\ome\|_{\Hg}^{s+2}
+C_\delta\|(1+y)^{\gamma+s}\ome^{\frac{n-1}{2}}\py^{s+1}\ome\|_{L^2}
\|\ome\|_{\Hg}^{s+1}.
\end{split}
\end{equation}
Combining $\ome\in H^{s,\gamma}_{\si,\de}$ and \eqref{J2-trace-product}, we can conclude that
\begin{equation}\label{J2-bound}
\begin{split}
J_2
\leq
& C_\delta \|Q_{k_{1},k_{2},\rho}^l(\ome; \mathcal{Q}_\epsilon)|_{y=0}\|_{L^\ty_{\T}}
\big(\|\ome\|_{\Hg}^{s+2}
+\|(1+y)^{\gamma+s}\ome^{\frac{n-1}{2}}\py^{s+1}\ome\|_{L^2}
\|\ome\|_{\Hg}^{s+1}\big)\\
\leq
& C_\delta \sum_{l=0}^{\frac{s}{2}-1}\|\partial_t^l\px^{s-2l-1}\pe\|_{L^\ty_{\T}}
\big(\|\ome\|_{\Hg}^{s+2}
+\|(1+y)^{\gamma+s}\ome^{\frac{n-1}{2}}\py^{s+1}\ome\|_{L^2}
\|\ome\|_{\Hg}^{s+1}\big)\\
\leq
& \frac{n}{80}\|(1+y)^{\gamma+s}\ome^{\frac{n-1}{2}}\py^{s+1}\ome\|_{L^2}^2
+C_\delta\big(\|\ome\|_{\Hg}^{4(s+1)}+\|\ome\|_{\Hg}^{2(s+2)}\\
&
+\sum_{l=0}^{\frac{s}{2}-1}\|\partial_t^l\px^{s-2l-1}\pe\|_{L^\ty_{\T}}^4
+\sum_{l=0}^{\frac{s}{2}-1}\|\partial_t^l\px^{s-2l-1}\pe\|_{L^\ty_{\T}}^2\big)\\
\leq
& \frac{n}{80}\|(1+y)^{\gamma+s}\ome^{\frac{n-1}{2}}\py^{s+1}\ome\|_{L^2}^2
+C_\delta\big(1+\|\ome\|_{\Hg}^{4(s+1)}
+\sum_{l=0}^{\frac{s}{2}}\|\partial_t^l\px^{s-2l+1}\pe\|_{L^\ty_{\T}}^4
\big).
\end{split}
\end{equation}
When $2\leq\al_1\leq s-2$, it is easy to see that $\al_1$ is even and $\al=(\al_1,\al_2)=(\al_1,2k_2)$. A direct computation yields
\begin{equation*}
\begin{split}
I^4_{1}=
&-n\int_{\T}(\ome^{n-1} D^{\al}\ome
\px^{\al_1}\py ^{2k_2+1}\ome)|_{y=0}dx\\
=
&-n\int_{\T}D^{\al}\ome\px^{\al_1}(\ome^{n-1} \py ^{2k_2+1}\ome)|_{y=0}dx\\
&
+n\sum_{i=1}^{\al_1}\binom{\al_1}{i}\int_{\T}D^{\al}\ome(\px^{i}\ome^{n-1}) \px^{\al_1-i}\py ^{2k_2+1}\ome)|_{y=0}dx\\
:=&J_3+J_4.
\end{split}
\end{equation*}
For $J_3$, Lemma \ref{bianjiejiangjie} implies that
\begin{equation*}
\begin{split}
J_3=
&-n\int_{\T}D^{\al}\ome\px^{\al_1}(\ome^{n-1} \py ^{2k_2+1}\ome)|_{y=0}dx\\
=&
-\frac{1}{n^{k_2}}\int_{\T}D^{\al}\ome\ome^{k_2(1-n)}|_{y=0}(\pt-\epsilon^2\px^2)^{k_2}\px^{\al_1+1} \pe dx\\
&-n\sum_{l=0}^{k_2}\epsilon^{2l} \sum_{j=1}^{\al_1+2k_2+1}\sum_{\rho\in A_{\al_{1},k_{2}}^j}
\int_{\T}D^{\al}\ome\ome^{k_2(1-n)}|_{y=0}
Q_{k_{1},k_{2},\rho}^l(\ome; \mathcal{Q}_\epsilon)
\prod_{i=1}^{j}D^{\rho^i}\ome\Big|_{y=0} dx.
\end{split}
\end{equation*}
By estimates similar to those for $J_1$ and $J_2$, we obtain
\begin{equation}\label{J3}
\begin{split}
J_{3}
\leq& \frac{n}{80}\|(1+y)^{\gamma+s}\ome^{\frac{n-1}{2}}\py D^{\al}\ome\|_{L^2}^2
+C_\delta\big(1+\|\ome\|_{\Hg}^{4(s+1)}
+\sum_{l=0}^{\frac{s}{2}}\|\partial_t^l\px^{s-2l+1}\pe\|_{L^\ty_{\T}}^4
\big).
\end{split}
\end{equation}
By using Lemma \ref{faa}, we have
\begin{equation}\label{J4-1}
\px^{i}\ome^{n-1}
= \sum_{r=1}^{i} (n-1)^{\underline{r}} \, \ome^{n-1-r} \sum_{\substack{i_1 + \cdots + i_r = i \\ i_j \ge 1}} \frac{i!}{r!} \prod_{j=1}^r \frac{\px^{i_j} \ome}{i_j!}.
\end{equation}
It follows from \eqref{jiguji} that
\begin{equation}\label{J4-trace-decomposition}
\begin{split}
&\Big|\int_{\T}D^{\al}\ome(\prod_{j=1}^r \px^{i_j} \ome) \px^{\al_1-i}\py ^{2k_2+1}\ome)|_{y=0}dx\Big|\\
\leq
& \int_0^1\int_{\T}\Big|D^{\al}\ome(\prod_{j=1}^r \px^{i_j} \ome) \px^{\al_1-i}\py ^{2k_2+1}\ome)\Big| dx dy\\
&
+\int_0^1\int_{\T}\Big|\py D^{\al}\ome(\prod_{j=1}^r \px^{i_j} \ome) \px^{\al_1-i}\py ^{2k_2+1}\ome)\Big| dx dy\\
&
+\int_0^1\int_{\T}\Big|D^{\al}\ome\py(\prod_{j=1}^r \px^{i_j} \ome) \px^{\al_1-i}\py ^{2k_2+1}\ome)\Big| dx dy\\
&
+\int_0^1\int_{\T}\Big|D^{\al}\ome(\prod_{j=1}^r \px^{i_j} \ome) \px^{\al_1-i}\py ^{2k_2+2}\ome)\Big| dx dy\\
:=&J_{41}+J_{42}+J_{43}+J_{44}.
\end{split}
\end{equation}
Using H\"{o}lder's inequality, we can deduce that
\begin{equation}\label{J41}
\begin{split}
J_{41}\leq
& C\|(1+y)^{\gamma+\al_2}D^{\al}\ome\|_{L^2}
\|(1+y)^{\gamma+\al_2+1}\px^{\al_1-i}\py ^{2k_2+1}\ome\|_{L^2}
\|\prod_{j=1}^r \px^{i_j} \ome\|_{L^\ty}\\
\leq
&C\|\ome\|_{\Hg}^s.
\end{split}
\end{equation}
Similar to the treatment of \eqref{K43}-\eqref{xinhaolei}, we have
\begin{equation}\label{J42-preliminary}
\begin{split}
J_{42}\leq
& C\|(1+y)^{\gamma+\al_2}\ome^{\frac{n-1}{2}}\py D^{\al}\ome\|_{L^2}
\|(1+y)^{\gamma+\al_2}\px^{\al_1-i}\py ^{2k_2+1}\ome\|_{L^2}\\
&\cdot
\|(1+y)^{-2(\gamma+\al_2)}\ome^{\frac{1-n}{2}}\prod_{j=1}^r \px^{i_j} \ome\|_{L^\ty}\\
\leq
&\frac{n}{160}\|(1+y)^{\gamma+\al_2}\ome^{\frac{n-1}{2}}\py D^{\al}\ome\|_{L^2}^{2}+
C_n\|\ome\|_{\Hg}^2\|(1+y)^{-2(\gamma+\al_2)}\ome^{\frac{1-n}{2}}\prod_{j=1}^r \px^{i_j} \ome\|_{L^\ty}^2.
\end{split}
\end{equation}
Let $r_0$ denote the number of $i$ such that $i_j=1$, then by $\ome\in H^{s,\gamma}_{\si,\de}$, \eqref{shuaijian5} and $i_1 + \cdots + i_r = i$, we have
\begin{equation}\label{J42-2}
\begin{split}
\|(1+y)^{-2(\gamma+\al_2)}\ome^{\frac{1-n}{2}}\prod_{j=1}^r \px^{i_j} \ome\|_{L^{\ty}}
\leq C(1+\|\ome\|_{\Hg})^r
\|(1+y)^{c_r}\|_{L^{\ty}},
\end{split}
\end{equation}
where
\begin{equation}\label{J42-3}
\begin{split}
c_r=&-2(\gamma+\al_2)-\si\frac{1-n}{2}-\si r_0-\frac{(s-2)\si(r-r_0)-2\ga(r-r_0)}{s-4}\\
&+\frac{(i-r_0)(\si-\ga)}{s-4}.
\end{split}
\end{equation}
Formally, if $c_r\leq0$, then
\begin{equation}\label{J42}
\begin{split}
J_{42}
\leq
&\frac{n}{160}\|(1+y)^{\gamma+\al_2}\ome^{\frac{n-1}{2}}\py D^{\al}\ome\|_{L^2}^{2}+
C_{n,\delta}\|\ome\|_{\Hg}^{2s}.
\end{split}
\end{equation}
Similarly,
\begin{equation}\label{J43}
\begin{split}
J_{43}+J_{44}
\leq
&\frac{n}{160}\|(1+y)^{\gamma+\al_2}\ome^{\frac{n-1}{2}}\py D^{\al}\ome\|_{L^2}^{2}+
C_{n,\delta}\|\ome\|_{\Hg}^{2s}.
\end{split}
\end{equation}
Combining $\big|\ome^{n-1-r}|_{y=0}\big|\leq C_\de$, \eqref{J41}, \eqref{J42}, and \eqref{J43}, we obtain
\begin{equation}\label{J4}
\begin{split}
J_{4}
\leq
&\frac{n}{80}\|(1+y)^{\gamma+\al_2}\ome^{\frac{n-1}{2}}\py D^{\al}\ome\|_{L^2}^2+
C_{n,\delta}(1+\|\ome\|_{\Hg})^{2s}.
\end{split}
\end{equation}
Therefore,
\begin{equation}\label{J3J4}
\begin{split}
I^4_{1}
\leq& \frac{n}{40}\|(1+y)^{\gamma+\al_2}\ome^{\frac{n-1}{2}}\py D^{\al}\ome\|_{L^2}^2
+C_\delta\big(1+\|\ome\|_{\Hg}^{4(s+1)}
+\sum_{l=0}^{\frac{s}{2}}\|\partial_t^l\px^{s-2l+1}\pe\|_{L^\ty_{\T}}^4
\big).
\end{split}
\end{equation}

$\bullet$ If $\al_2$ is odd, we may assume without loss of generality that $\al_2=2 q+1$.  Since
$s$ is even, it is easy to see that $1\leq \al_1\leq s-1$. Using integration by parts, we obtain
\begin{equation}\label{I14-odd-mixed}
\begin{split}
I^4_1
=&-n\int_{\T}(\ome^{n-1} D^{\al}\ome
\py D^{\al}\ome)|_{y=0}dx\\
=&
n\int_{\T}\px(\ome^{n-1} \px^{\al_1}\py^{\al_2}\ome) \px^{\al_1-1}\py^{\al_2+1}\ome|_{y=0}dx\\
=&
n\int_{\T}\px\big[\big(\px^{\al_1}(\ome^{n-1}\py^{\al_2}\ome)
-\sum_{k=1}^{\al_1}\binom{\al_1}{k}\px^k\ome^{n-1}\px^{\al_1-k}\py^{\al_2}\ome
 \big)\big]\px^{\al_1-1}\py^{\al_2+1}\ome\Big|_{y=0}dx \\
=&
n\int_{\T}[\px^{\al_1+1}(\ome^{n-1}\py^{\al_2}\ome)\px^{\al_1-1}\py^{\al_2+1}\ome]|_{y=0}dx\\
&
-n\sum_{k=1}^{\al_1}\binom{\al_1}{k}\int_{\T} \px^{k+1}\ome^{n-1}\px^{\al_1-k}\py^{\al_2}\ome\px^{\al_1-1}\py^{\al_2+1}\ome|_{y=0}dx\\
&
-n\sum_{k=1}^{\al_1}\binom{\al_1}{k}\int_{\T}
\px^{k}\ome^{n-1}\px^{\al_1-k+1}\py^{\al_2}\ome\px^{\al_1-1}\py^{\al_2+1}\ome|_{y=0}dx\\
:=&J_5+J_6+J_7.
\end{split}
\end{equation}
For $J_5$, Lemma \ref{bianjiejiangjie} gives
\begin{equation}\label{J5}
\begin{split}
J_5
&=n\int_{\T}[\px^{\al_1+1}(\ome^{n-1}\py^{2q+1}\ome)\px^{\al_1-1}\py^{\al_2+1}\ome]|_{y=0}dx\\
&=
\frac{1}{n^{k_2}}\int_{\T}\ome^{q(1-n)}(\pt-\epsilon^2\px^2)^{q}\px^{\al_1+1} \pe\px^{\al_1-1}\py^{\al_2+1}\ome]|_{y=0}dx\\
&\quad+\sum_{l=0}^{k_2}\epsilon^{2l} \sum_{j=2}^{s+1}\sum_{\rho\in A_{\al_{1},q}^j}
\int_{\T}Q_{k_{1},k_{2},\rho}^l(\ome; \mathcal{Q}_\epsilon)
\prod_{i=1}^{j}D^{\rho^i}\ome\px^{\al_1-1}\py^{\al_2+1}\ome]\Big|_{y=0}dx.
\end{split}
\end{equation}
By an estimate similar to that for $J_3$, we obtain
\begin{equation}\label{J5-1}
\begin{split}
J_5
&\leq \frac{n}{80} \|(1+y)^{\gamma+\al_2+1}\ome^{\frac{n-1}{2}}\px^{\al_1-1}\py^{\al_2+2}\ome\|_{L^2}^2
+C_{n,\delta}\big(1+\|\ome\|_{\Hg}^{4(s+1)}\\
&\quad+\sum_{l=0}^{\frac{s}{2}}\|\partial_t^l\px^{s-2l+1}\pe\|_{L^\ty_{\T}}^4\big).
\end{split}
\end{equation}
By using Lemma \ref{faa}, we have
\begin{equation}\label{J6-1}
\px^{k+1}\ome^{n-1}
= \sum_{r=1}^{k+1} (n-1)^{\underline{r}} \, \ome^{n-1-r} \sum_{\substack{k_1 + \cdots + k_r = k+1 \\ k_j \ge 1}} \frac{(k+1)!}{r!} \prod_{j=1}^r \frac{\px^{k_j} \ome}{k_j!}.
\end{equation}
It follows from \eqref{jiguji} that
\begin{equation}\label{J66-1}
\begin{split}
&\Big|\int_{\T}(\prod_{j=1}^r \px^{k_j} \ome) \px^{\al_1-k}\py ^{\al_2}\ome \px^{\al_1-1}\py ^{\al_2+1}\ome)|_{y=0}dx\Big|\\
\leq
& \int_0^1\int_{\T}\Big|(\prod_{j=1}^r \px^{k_j} \ome) \px^{\al_1-k}\py ^{\al_2}\ome \px^{\al_1-1}\py ^{\al_2+1}\ome)\Big| dx dy\\
&
+\int_0^1\int_{\T}\Big|\py(\prod_{j=1}^r \px^{k_j} \ome) \px^{\al_1-k}\py ^{\al_2}\ome \px^{\al_1-1}\py ^{\al_2+1}\ome)\Big| dx dy\\
&
+\int_0^1\int_{\T}\Big|(\prod_{j=1}^r \px^{k_j} \ome) \px^{\al_1-k}\py ^{\al_2+1}\ome \px^{\al_1-1}\py ^{\al_2+1}\ome)\Big| dx dy\\
&
+\int_0^1\int_{\T}\Big|(\prod_{j=1}^r \px^{k_j} \ome) \px^{\al_1-k}\py ^{\al_2}\ome \px^{\al_1-1}\py ^{\al_2+2}\ome)\Big| dx dy\\
:=&J_{61}+J_{62}+J_{63}+J_{64}.
\end{split}
\end{equation}
For $J_{61}$, if $1\leq k\leq\al_1\leq s-3$, it is easy to obtain
\begin{equation*}
\begin{split}
J_{61}
\leq
 \|\prod_{j=1}^r \px^{k_j} \ome\|_{L^\ty}\|\px^{\al_1-k}\py ^{\al_2}\ome \|_{L^2}
\|\px^{\al_1-1}\py ^{\al_2+1}\ome\|_{L^2}
\leq
C_\de\|\ome\|_{\Hg}^{s+3}.
\end{split}
\end{equation*}
If $\al_1= s-1$, using \eqref{chabuduo}, we have
\begin{equation*}
\begin{split}
J_{61}
&
\leq
 \|\prod_{j=1}^r \px^{k_j} \ome\|_{L^2}\|\px^{\al_1-k}\py ^{\al_2}\ome \|_{L^\ty}
\|\px^{\al_1-1}\py ^{\al_2+1}\ome\|_{L^2}\\
&
\leq C_{s,\ga,\si,\de}(\|\om\|_{H^{s, \gamma}_g}+
\|\px^{s} U\|_{L^2(\T)})
\|\ome\|_{\Hg}^{s+2}.
\end{split}
\end{equation*}
Therefore,
\begin{equation}\label{J61}
J_{61}
\leq C_{s,\ga,\si,\de}(\|\om\|_{H^{s, \gamma}_g}+
\|\px^{s} U\|_{L^2(\T)})
\|\ome\|_{\Hg}^{s+2}.
\end{equation}
For $J_{62}$, if $1\leq k\leq\al_1\leq s-5$, it is easy to obtain
\begin{equation*}
\begin{split}
J_{62}
\leq
 \|\py(\prod_{j=1}^r \px^{k_j} \ome)\|_{L^\ty}\|\px^{\al_1-k}\py ^{\al_2}\ome \|_{L^2}
\|\px^{\al_1-1}\py ^{\al_2+1}\ome\|_{L^2}
\leq
C_\de\|\ome\|_{\Hg}^{s+3}.
\end{split}
\end{equation*}
If $s-4\leq k<\al_1\leq s-1$, we get
\begin{equation*}
\begin{split}
J_{62}
\leq
\|\py \px^{k_1} \ome\|_{L^2}\|\prod_{j=2}^r \px^{k_j} \ome\|_{L^\ty}
\|\px^{\al_1-k}\py ^{\al_2}\ome \|_{L^\ty}
\|\px^{\al_1-1}\py ^{\al_2+1}\ome\|_{L^2}
\leq
C_\de\|\ome\|_{\Hg}^{s+3}.
\end{split}
\end{equation*}
If $k=\al_1= s-1$, the term $J_{62}$ becomes difficult to control; in this case, we apply $\gse$  to overcome the difficulty. Specifically, from the definition of $g_s$, it follows that
\begin{equation*}
\begin{split}
\py\px^{s}\ome=\py \gse+\py \aep\px^{s}(\ue-U)+\aep\px^{s}\ome.
\end{split}
\end{equation*}
Since $\ome\in H^{s,\gamma}_{\si,\de}$ and $1<n<\frac{7}{3}$, we have
\begin{equation}\label{cde}
\begin{split}
&\|(1+y)^{-2\gamma-2}\ome^{\frac{1-n}{2}}\py \ome\|_{L^\infty}
\leq C_\delta(1+y)^{-2\gamma-3-\sigma\frac{1-n}{2}-\sigma-1}
\leq C_\delta,\\
&\|(1+y) \aep\|_{L^\ty}+\|\py \aep\|_{L^\ty}\leq C_\delta.
\end{split}
\end{equation}
Then
\begin{equation*}
\begin{split}
J_{62}
=
&\int_0^1\int_{\T}\Big|\py\px^s \ome(\prod_{j=2}^r \px^{k_j} \ome) \py \ome \px^{s-2}\py ^{2}\ome)\Big| dx dy\\
\leq
&\int_0^1\int_{\T}\Big|(\py \gse+\py \aep\px^{s}(\ue-U)+\aep\px^{s}\ome)
(\prod_{j=2}^r \px^{k_j} \ome) \py  \ome \px^{s-2}\py ^{2}\ome)\Big| dx dy\\
\leq
&C_{s,\ga,\si,\de}\|\prod_{j=2}^r \px^{k_j} \ome\|_{L^\ty}
\|(1+y)^{\gamma+2}\px^{s-2}\py ^{2}\ome\|_{L^2}
\big(\|\py \aep\|_{L^\ty}\|\px^{s}(\ue-U)\|_{L^2}\|\py \ome\|_{L^\ty}
\\
&\quad+\|(1+y)^{\gamma}\ome^{\frac{n-1}{2}}\py \gse\|_{L^2}\|(1+y)^{-2\gamma-2}\ome^{\frac{1-n}{2}}\py \ome\|_{L^\ty}+\| \aep\|_{L^\ty}\|\px^{s}\ome\|_{L^2}
\big)\\
\leq
&C_{s,\ga,\si,\de}\|\ome\|_{\Hg}^{s+1}
\big(\|(1+y)^{\gamma}\ome^{\frac{n-1}{2}}\py \gse\|_{L^2}+\|\px^{s} U\|_{L^2(\T)}+\|\ome\|_{\Hg}\big)\\
\leq
&
\frac{n}{4}\|(1+y)^{\gamma}\ome^{\frac{n-1}{2}}\py \gse\|_{L^2}^2+
C_{s,\ga,\si,n,\de}\big(\|\px^{s} U\|_{L^2(\T)}^2+\|\ome\|_{\Hg}^2\big)\|\ome\|_{\Hg}^{2(s+1)}.
\end{split}
\end{equation*}
Therefore,
\begin{equation}\label{J62}
J_{62}
\leq
\frac{n}{4}\|(1+y)^{\gamma}\ome^{\frac{n-1}{2}}\py \gse\|_{L^2}^2+
C_{s,\ga,\si,n,\de}\big(\|\px^{s} U\|_{L^2(\T)}^2+\|\ome\|_{\Hg}^2\big)\|\ome\|_{\Hg}^{2(s+1)}.
\end{equation}
similarly,
\begin{equation}\label{J63}
J_{63}+J_{64}
\leq \frac{n}{80} \|(1+y)^{\gamma+\al_2+1}\ome^{\frac{n-1}{2}}\px^{\al_1-1}\py^{\al_2+2}\ome\|_{L^2}^2
+C_{s,\ga,\si,n,\de}\big(\|\px^{s} U\|_{L^2(\T)}^2+\|\ome\|_{\Hg}^2\big)\|\ome\|_{\Hg}^{2(s+1)}.
\end{equation}
By combining \eqref{J61}, \eqref{J62}, and \eqref{J63}, one obtains
\begin{equation}\label{J6-components}
\begin{split}
J_{6}
\leq
&\frac{n}{80} \|(1+y)^{\gamma+\al_2+1}\ome^{\frac{n-1}{2}}\px^{\al_1-1}\py^{\al_2+2}\ome\|_{L^2}^2
+\frac{n}{4}\|(1+y)^{\gamma}\ome^{\frac{n-1}{2}}\py \gse\|_{L^2}^2\\
&+C_{s,\ga,\si,n,\de}\big(1+\|\px^{s} U\|_{L^2(\T)}^2+\|\ome\|_{\Hg}^2\big)\|\ome\|_{\Hg}^{2(s+1)}.
\end{split}
\end{equation}
For $J_{7}$, an estimate similar to that for $J_{6}$ yields
\begin{equation}\label{J6}
\begin{split}
J_{7}
\leq
&\frac{n}{80} \|(1+y)^{\gamma+\al_2+1}\ome^{\frac{n-1}{2}}\px^{\al_1-1}\py^{\al_2+2}\ome\|_{L^2}^2\\
&+C_{s,\ga,\si,n,\de}\big(1+\|\px^{s} U\|_{L^2(\T)}^2+\|\ome\|_{\Hg}^2\big)\|\ome\|_{\Hg}^{2(s+1)}.
\end{split}
\end{equation}

\hfill $\square$

\subsection{Estimates Only in Tangential Derivatives}\label{zuoqiexiangguji}
In this subsection, we will deal with the estimates for $\px^{s}\ome$.

Since $\ome=\py u_\epsilon$, the first equation in \eqref{feiniu2-1} can be rewritten as
\begin{equation}\label{feiniu3}
\pt \ue +\ue\px \ue +\ve\py \ue-\epsilon^2\px^2\ue-n\ome^{n-1}\py^2 \ue+\px \pe=0.
\end{equation}
Then by using Bernoulli's law \eqref{BL}, we have
\begin{equation}\label{feiniu4}
\pt (\ue-U) +\ue\px (\ue-U) +\ve\py (\ue-U)-\epsilon^2\px^2(\ue-U)-n\ome^{n-1}\py^2 (\ue-U)+(\ue-U)\px U=0.
\end{equation}


Applying the operator $\px^s$ to \eqref{feiniu4}, it yields
\begin{equation}\label{ptauu}
\begin{split}
&\quad\pt \px^s(\ue-U) +\ue\px^{s+1}(\ue-U) +\ve\py \px^s(\ue-U)-\epsilon^2\px^{s+2}(\ue-U)\\
&\quad-n\ome^{n-1}\py^2 \px^s(\ue-U)
+\px^s \ve\ome\\
=
&-\sum_{0<j\leq s}\binom{s}{j}
\px^{j}\ue\px^{s-j+1}(\ue-U)
-\sum_{0<j<s}\binom{s}{j}
\px^{j}\ve\py \px^{s-j}(\ue-U)\\
&+n\sum_{0<j\leq s}\binom{s}{j}
\px^{j}\ome^{n-1}\py^2 \px^{s-j}(\ue-U)
-\sum_{0\leq j\leq s}\binom{s}{j}
\px^{j}(\ue-U)\px^{s-j+1}U.
\end{split}
\end{equation}
On the other hand, we have
\begin{equation}\label{ptauwodu}
\begin{split}
&\pt \px^s\ome +\ue\px^{s+1}\ome +\ve\py \px^s\ome-\epsilon^2\px^{s+2}\ome
-n\ome^{n-1}\py^2 \px^s\ome
+\px^s\ve\py\ome\\
=
&-\sum_{0<j\leq s}\binom{s}{j}
\px^j\ue\px^{s-j+1}\ome
-\sum_{0<j<s}\binom{s}{j}
\px^j\ve\py \px^{s-j}\ome\\
&+n\sum_{0<j\leq s}\binom{s}{j}
\px^{j}\ome^{n-1}\py^2 \px^{s-j}\ome
+n(n-1)\sum_{0\leq j\leq s}\binom{s}{j}
\px^{j}\ome^{n-2}\px^{s-j}|\py\ome|^2.
\end{split}
\end{equation}
Subtracting $\frac{\py\ome}{\ome}\times\eqref{ptauu}$ from \eqref{ptauwodu} yields
\begin{equation}\label{ptaug}
\begin{split}
&\pt \gse +\ue\px \gse +\ve\py \gse-\epsilon^2\px^{2}\gse-n\ome^{n-1}\py^2 \gse
\\
=&-\sum_{0<j<s}\binom{s}{j}
\px^{j}\ue g^{s-j+1}_\epsilon
-\sum_{0<j<s}\binom{s}{j}
\px^{j}\ve(\py \px^{s-j}\ome-\aep\px^{s-j}\ome)\\
&+n\sum_{0<j\leq s}\binom{s}{j}
\px^{j}\ome^{n-1}(\py^2 \px^{s-j}\ome
-\aep\py \px^{s-j}\ome)\\
&+n(n-1)\sum_{0\leq j\leq s}\binom{s}{j}
\px^{j}\ome^{n-2}\px^{s-j}|\py\ome|^2
+\sum_{0\leq j<s}\binom{s}{j}
\aep\px^{j}(\ue-U)\px^{s-j+1}U\\
&-\px^{s}(\ue-U)\{2n(n-1)\aep\ome^{n-2}\py^2\ome+n\aep\py^2\ome^{n-1}
-n(n-1)\aep^3\ome^{n-1}\}\\
&+2n\ome^{n-1}\py \aep \gse-g^{1}_\epsilon\px^s U
+2\epsilon^2\px \aep\big[ \px^{s+1}(\ue-U)-\frac{\px \ome}{\ome}\px^{s}(\ue-U)\big].
\end{split}
\end{equation}
where we set $g_\epsilon^{1}:=\px\ome-\aep\px (\ue-U)$ and $\aep:=\frac{\py\ome}{\ome}$.
The derivation of \eqref{ptaug} will be provided later. Now, we are going to prove the following estimate for $\gse$.
\begin{Proposition}\label{guji-tangential}
Let $s\geq6$ be an even integer, $1<n<\frac{7}{3}$,
$\gamma\geq1$, $\gamma+\frac12<\sigma\leq\min\{\frac{2}{n-1},\gamma+\frac{4}{3}\}$, $\epsilon\in (0,1]$ and $\de\in(0,1)$ be sufficiently small.
If $(u_\epsilon,v_\epsilon,\omega_\epsilon)$ is a classical solution of \eqref{wodu} in $[0,T]$ and satisfies
\begin{equation*}
\ome\in C([0,T]; H^{s+4,\gamma}_{\si,\de})\cap C^{1}([0,T]; H^{s+2,\gamma}),
\end{equation*}
then there exists a positive constant C, which depends on $n,~s,~\ga,~\si$ and $\de$ such that
\begin{equation}\label{faguji}
\begin{split}
&\frac{1}{2}\frac{d}{dt}\|(1+y)^{\ga}\gse\|_{L^2}^2+\frac{\epsilon^2}{2}\|(1+y)^{\ga}\px\gse\|_{L^2}^2
+\frac{n}{2}\|(1+y)^{\gamma}\ome^{\frac{n-1}{2}}\py \gse\|_{L^2}^2\\
\leq &\frac{n}{4}\|(1+y)^{\gamma}\ome^{\frac{n-1}{2}}\py^2\px^{s-1}\ome \|_{L^2}^2
+C_{s,\ga,\si,n,\de}\big(1+\|\px^{s+1}U\|_{L^2(\T)}+\|\ome\|_{\Hg}\big)^{2s}.
\end{split}
\end{equation}
\end{Proposition}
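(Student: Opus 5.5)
We test the equation \eqref{ptaug} for $\gse$ against $(1+y)^{2\gamma}\gse$ and integrate over $\Omega$, in the same way as in the proof of Proposition~\ref{guji1}. The key point is that \eqref{ptaug} no longer contains $\px^s\ve$: in the difference $\eqref{ptauwodu}-\aep\times\eqref{ptauu}$ the term $\px^s\ve\,\py\ome$ cancels against $\aep\px^s\ve\,\ome$. Hence every term on the right-hand side involves at most $s$ tangential derivatives of $\ome$, or $s+1$ tangential derivatives of $\ue-U$, and these can be written through $g^{s-j+1}_\epsilon$ or controlled by \eqref{chabuduo}.

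\textbf{Left-hand side.} The transport terms give $(\gamma)\int(1+y)^{2\gamma-1}\ve|\gse|^2$ after integration by parts, and this is bounded using \eqref{vdeguji}. The $\epsilon^2$ term produces $\epsilon^2\|(1+y)^\gamma\px\gse\|^2$. For the degenerate diffusion we integrate by parts exactly as for $I_1$ in \eqref{K1jisuan}. This yields the good term $-n\|(1+y)^\gamma\ome^{\frac{n-1}{2}}\py\gse\|^2$ together with commutators. These commutators are bounded by \eqref{K11-1} and absorbed, as in \eqref{K125}--\eqref{K32}.

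\textbf{Boundary term and bulk error terms.} For the boundary term $n\int_{\T}\ome^{n-1}\gse\py\gse|_{y=0}$ we first compute $\py\gse|_{y=0}$. This uses \eqref{yijie} for $n\ome^{n-1}\py\ome|_{y=0}=\px\pe$, together with $\ue|_{y=0}=0$, $\px^s(\ue-U)|_{y=0}=-\px^sU$, and the equation \eqref{feiniu4} evaluated at $y=0$. The result is that $n\ome^{n-1}\py\gse|_{y=0}$ is expressed through $\px^{s+1}\pe$, $\px^{s}U$, $\px^{s+1}U$ and lower order traces. By Bernoulli's law \eqref{BL} it is controlled by $\|\px^{s+1}U\|$ plus trace terms. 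Those trace terms are then handled by \eqref{L1.5} or \eqref{jiguji}, with the factor $\ome^{\frac{1-n}{2}}$ compensated by \eqref{fuzhibiao}.

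The right-hand side of \eqref{ptaug} is then treated term by term.
\begin{itemize}
\item $\px^j\ue\, g^{s-j+1}_\epsilon$ and $\px^j\ve(\cdots)$: these are bounded by Hölder's inequality, \eqref{vxdeguji}, \eqref{vudeguji}, and Lemma~\ref{jingchangyong}, in the same way as $I_3$.
\item $\px^j\ome^{n-1}(\py^2\px^{s-j}\ome-\aep\py\px^{s-j}\ome)$: we expand by the Faà di Bruno formula (Lemma~\ref{faa}) and use the decay rates of Proposition~\ref{shuaijian}, with exponents checked as in \eqref{K44}--\eqref{K46} using $\sigma\le\gamma+\frac43$ and $\sigma\le \frac{2}{n-1}$.
\item The case $j=1$ contains $\py^2\px^{s-1}\ome$ weighted only by $\ome^{n-2}\px\ome$. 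We write this factor as $\ome^{\frac{n-1}{2}}\cdot \ome^{\frac{n-3}{2}}\px\ome$ and apply Young's inequality. This produces the term $\frac n4\|(1+y)^\gamma\ome^{\frac{n-1}{2}}\py^2\px^{s-1}\ome\|^2$ that appears on the right-hand side of \eqref{faguji}.
\item The $|\py\ome|^2$ terms are handled as in the bound for $I_5$.
\item The terms with $\aep$, $\aep^3\ome^{n-1}$ and $\py\aep$ use \eqref{cde} and the weighted bound $(1+y)^{\sigma+2}|\py^2\ome|\le\de^{-1}$.
\item The $\epsilon^2\px\aep\px^{s+1}(\ue-U)$ term is integrated by parts in $x$. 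Half of it is absorbed into $\frac{\epsilon^2}{2}\|(1+y)^\gamma\px\gse\|^2$, using $\epsilon\le1$.
\end{itemize}
All resulting products are bounded by $(1+\|\px^{s+1}U\|+\|\ome\|_{\Hg})^{2s}$ through \eqref{chabuduo}.

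\textbf{Expected main obstacle.} We expect the hardest part to be the degenerate-weight bookkeeping. The first difficulty is the boundary trace of $\py\gse$, where negative powers of $\ome$ must be compensated by powers of $(1+y)$. The second is the top-order commutator $\px\ome^{n-1}\py^2\px^{s-1}\ome$. For that commutator we will verify that $(1+y)\,|\ome^{\frac{n-3}{2}}\px\ome|$ is bounded, since $\sigma\frac{n-1}{2}\ge 0$; after this the term reduces to the absorbable quantity on the right-hand side of \eqref{faguji}.
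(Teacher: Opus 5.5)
Your overall scheme matches the paper's: you test \eqref{ptaug} against $(1+y)^{2\gamma}\gse$, treat the degenerate diffusion as in $I_1$, reduce the boundary term with the $x$-differentiated boundary condition, and apply Young's inequality to the $j=1$ commutator to produce $\frac n4\|(1+y)^\gamma\ome^{\frac{n-1}{2}}\py^2\px^{s-1}\ome\|_{L^2}^2$. However, your treatment of the $\epsilon^2$ term does not close. Integrating $2\epsilon^2\int_\Omega(1+y)^{2\gamma}\gse\,\px\aep\,\px^{s+1}(\ue-U)$ by parts in $x$ leaves two pieces. One contains $\px\gse$ and is absorbable. The other is $2\epsilon^2\int_\Omega(1+y)^{2\gamma}\gse\,\px^2\aep\,\px^{s}(\ue-U)$, which you do not address. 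Since $\gse$ is controlled only with weight $(1+y)^{\gamma}$ and $\px^s(\ue-U)$ only with weight $(1+y)^{\gamma-1}$, you would need $(1+y)\px^2\aep\in L^\infty$. But $\px^2\aep$ contains $\px^2\py\ome/\ome$, a third-order derivative. Membership in $H^{s,\gamma}_{\sigma,\delta}$ gives the rate $(1+y)^{-\sigma-\alpha_2}$ only for $|\alpha|\le2$. Proposition~\ref{shuaijian} gives merely $|\px^2\py\ome|\lesssim(1+y)^{-b_{(2,1)}}$ with $b_{(2,1)}=\sigma+1-\frac{\sigma-\gamma}{s-4}$, so $(1+y)\px^2\aep$ may grow like $(1+y)^{\frac{\sigma-\gamma}{s-4}}$. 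The $\epsilon^2$-dissipation does not help here, since no $\px\gse$ appears.

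The paper never differentiates $\aep$ twice in $x$. Set $w:=\px^{s+1}(\ue-U)$; then $\ome\py(w/\ome)=\px\gse+\px\aep\,\px^s(\ue-U)$ and $w|_{y=0}=-\px^{s+1}U$. The weighted Hardy argument of Lemma~\ref{lem:difference-hardy1}, valid since $\sigma+1-\gamma>\frac12$, then gives $\|(1+y)^{\gamma-1}w\|_{L^2}\le C_\delta\big(\|(1+y)^{\gamma}\px\gse\|_{L^2}+\|\px^{s+1}U\|_{L^2(\T)}+\|(1+y)^{\gamma-1}\px^s(\ue-U)\|_{L^2}\big)$. This uses only $(1+y)\px\aep\in L^\infty$. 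Young's inequality then absorbs the result into $\frac{\epsilon^2}{2}\|(1+y)^\gamma\px\gse\|_{L^2}^2$, as in \eqref{S11-bound}. This step, together with the term $\aep(\ue-U)\px^{s+1}U$, is where $\|\px^{s+1}U\|_{L^2(\T)}$ actually enters.

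Second, your claim that Bernoulli's law bounds the pressure contribution at $y=0$ by $\|\px^{s+1}U\|_{L^2(\T)}$ is false. By \eqref{BL}, $\px^{s+1}\pe=-\px^s(\pt U+U\px U-\epsilon^2\px^2U)$, which involves $\pt\px^sU$ and $\epsilon^2\px^{s+2}U$. The paper keeps $\|\px^{s+1}\pe\|_{L^2(\T)}^2$ in \eqref{sufu1}, then drops it from \eqref{S14} and \eqref{faguji} without justification. The estimate should carry this pressure norm; doing so is harmless downstream, because $\phi$ in \eqref{phit} already contains pressure norms.

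Two smaller corrections. To apply the trace inequality to $\px^s\ome|_{y=0}$, you must rewrite $\px^s\py\ome=\py\gse+\py\aep\,\px^s(\ue-U)+\aep\px^s\ome$. You then pay for the degeneracy with $\|(1+y)^{-\gamma}\ome^{\frac{1-n}{2}}\|_{L^\infty}\le C_\delta$, which is exactly where $\sigma\frac{n-1}{2}\le1\le\gamma$ is used (see \eqref{jiandanbufen}). The bound \eqref{fuzhibiao}, with its weight $(1+y)^{-(\gamma+s)}$, is not enough, since $\gse$ has dissipation only with weight $(1+y)^\gamma$. Finally, your announced check that $(1+y)|\ome^{\frac{n-3}{2}}\px\ome|$ is bounded is wrong. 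That quantity is only $\lesssim(1+y)^{1-\sigma\frac{n-1}{2}}$, which is bounded in general only when $\sigma=\frac{2}{n-1}$. The check is also unnecessary: with the weight $(1+y)^\gamma$ on both $\gse$ and the dissipation term in \eqref{faguji}, you only need $|\ome^{\frac{n-3}{2}}\px\ome|\le C_\delta(1+y)^{-\sigma\frac{n-1}{2}}\le C_\delta$.
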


\textbf{Proof.}
Taking $L^2$ inner product of \eqref{ptaug} with $(1+y)^{2\ga}\gse$ yields
\begin{equation}\label{haoweixian}
\begin{split}
&\frac{1}{2}\frac{d}{dt}\|(1+y)^{\ga}\gse\|_{L^2}^2+\epsilon^2\|(1+y)^{\ga}\px\gse\|_{L^2}^2\\
=&n\int_{\Om}(1+y)^{2\ga}\ome^{n-1}\gse\py^2 \gse dxdy-\int_{\Om}(1+y)^{2\ga}\gse(\ue\px \gse +\ve\py \gse) dxdy
\\
&-\sum_{j=1}^{s-1}\binom{s}{j}\int_{\Om}(1+y)^{2\ga}\gse
\px^{j}\ue g^{s-j+1}_\epsilon dxdy\\
&-\sum_{j=1}^{s-1}\binom{s}{j}\int_{\Om}(1+y)^{2\ga}\gse
\px^{j}\ve(\py \px^{s-j}\ome-\aep\px^{s-j}\ome) dxdy\\
&+n\sum_{j=1}^{s}\binom{s}{j}
\int_{\Om}(1+y)^{2\ga}\gse
\px^{j}\ome^{n-1}(\py^2 \px^{s-j}\ome
-\aep\py \px^{s-j}\ome) dxdy\\
&+n(n-1)\sum_{j=0}^{s}\binom{s}{j}
\int_{\Om}(1+y)^{2\ga}\gse
\px^{j}\ome^{n-2}\px^{s-j}|\py\ome|^2 dxdy\\
&+\sum_{j=0}^{s-1}\binom{s}{j}
\int_{\Om}(1+y)^{2\ga}\gse
\aep\px^{j}(\ue-U)\px^{s-j+1}U dxdy\\
&-\int_{\Om}(1+y)^{2\ga}\gse
\px^{s}(\ue-U)\{2n(n-1)\aep\ome^{n-2}\py^2\ome+n\aep\py^2\ome^{n-1}
-n(n-1)\aep^3\ome^{n-1}\}dxdy\\
&+2n\int_{\Om}(1+y)^{2\ga}\ome^{n-1}\py \aep |\gse|^2 dxdy- \int_{\Om}(1+y)^{2\ga}\gse g^1_{\epsilon}\px^{s} U dxdy\\
&+2\epsilon^2\int_{\Om}(1+y)^{2\ga}\gse \px \aep\big[\px^{s+1}(\ue-U)-\frac{\px \ome}{\ome}\px^{s}(\ue-U)\big] dxdy\\
:=&\sum\limits_{i=1}^{11} S_i.
\end{split}
\end{equation}

For $S_1$, by integration by parts, we have
\begin{equation}\label{S1}
\begin{split}
S_1
=&-n\|(1+y)^{\ga}\ome^{\frac{n-1}{2}}\py \gse\|_{L^2}^2
-n(n-1)\int_{\Om}(1+y)^{2\ga}\py\ome\ome^{n-2}\gse\py \gse dxdy\\
&-2n\ga\int_{\Om}(1+y)^{2\ga-1}\ome^{n-1}\gse\py \gse dxdy
-n\int_{\T}(\ome^{n-1} \gse\py \gse )|_{y=0}dx\\
:=&\sum\limits_{i=1}^{4} S_1^i.
\end{split}
\end{equation}
For $S_1^i,~i=1,2$ and $3$, similar to $I_1^i$ in Proposition \ref{guji1}, we can deduce
\begin{equation}\label{S11}
\begin{split}
S_1^1+S_1^2+S_1^3
\leq &-\frac{3n}{4}\|(1+y)^{\ga}\ome^{\frac{n-1}{2}}\py \gse\|_{L^2}^2
+C_{s,\ga,\si,n,\de}\|\ome\|_{\Hg}^2.
\end{split}
\end{equation}
For $S_1^4$, we have
\begin{equation}\label{S14}
\begin{split}
|S_1^4|\leq& \frac{n}{4}\|(1+y)^{\gamma}\ome^{\frac{n-1}{2}}\py \gse\|_{L^2}^2
+\frac{n}{12}\|(1+y)^{\gamma}\ome^{\frac{n-1}{2}}\py^2\px^{s-1}\ome \|_{L^2}^2\\
&
+C_{s,\ga,\si,n,\de}\big(1+\|\px^{s}U\|_{L^2(\T)}+\|\ome\|_{\Hg}\big)^{2s}.
\end{split}
\end{equation}
The proof of \eqref{S14} will be provided later. Hence, we have the following estimate
\begin{equation}\label{S1dekongzhi}
\begin{split}
S_1 \leq &-\frac{n}{2}\|(1+y)^{\gamma}\ome^{\frac{n-1}{2}}\py \gse\|_{L^2}^2
+\frac{n}{12}\|(1+y)^{\gamma}\ome^{\frac{n-1}{2}}\py^2\px^{s-1}\ome \|_{L^2}^2\\
&
+C_{s,\ga,\si,n,\de}\big(1+\|\px^{s}U\|_{L^2(\T)}+\|\ome\|_{\Hg}\big)^{2s}.
\end{split}
\end{equation}

For $S_2$, by integration by parts, we have
\begin{equation}\label{S2dekongzhi}
\begin{split}
|S_2|=&\ga|\int_{\Om}(1+y)^{2\ga-1}\ve |\gse|^2 dxdy|\\
\leq&\ga \|(1+y)^{-1}\ve\|_{L^{\ty}}
\|(1+y)^{\ga}\gse\|_{L^2}^2\\
\leq&
C_{s,\ga,\si,\de}(\|\ome\|_{H^{s, \gamma}_g}+\|\px^{s} U\|_{L^2(\T)})\|\ome\|_{H^{s, \gamma}_g}^2,
\end{split}
\end{equation}
where we use \eqref{vdeguji}.

For $S_3$,
we obtain by using Sobolev embedding inequality, and \eqref{L5}
\begin{equation}\label{aiyouwei}
\begin{split}
&\Big|\int_{\Om}(1+y)^{2\ga}\gse\px^{j}\ue g^{s-j+1}_\epsilon dxdy\Big|\\
\leq&\|(1+y)^{\ga}\gse\|_{L^2}\|\px^{j}\ue\|_{L^\ty}
\|(1+y)^{\ga} g^{s-j+1}_\epsilon\|_{L^2}
\\
\leq&C_{s,\ga,\si,\delta} \|\ome\|_{H^{s, \gamma}_g}
(\|\ome\|_{H^{s, \gamma}_g}+\|\px^s U\|_{L^2(\T)})^2,
\end{split}
\end{equation}
where we use $\|\px^{j}\ue\|_{L^\ty}\leq C_{s,\ga,\si,\delta}(\|\ome\|_{H^{s, \gamma}_g}+\|\px^s U\|_{L^2(\T)})$ for $j\in[0,s-1]$.

For $S_4$, when $j=s-1$, by using H\"{o}lder's inequality and \eqref{cde}, we obtain
\begin{equation*}
\begin{split}
&\Big|\int_{\Om}(1+y)^{2\ga}\gse
\px^{s-1}\ve(\py \px\ome-\aep\px \ome) dxdy\Big|
\\
\leq &C \|(1+y)^{\ga}\gse\|_{L^2}
\|(1+y)^{-1}\px^{s-1}\ve\|_{L^2_x L^{\ty}_y}\\
&~~~\cdot(
\|(1+y)^{\ga+1}\py \px \ome\|_{L^{\ty}_x L^2_y}
+\|a\|_{L^{\ty}}
\|(1+y)^{\ga}\px \ome\|_{L^{\ty}_x L^2_y})\\
\leq &C_{s,\ga,\si,\delta}(\|\ome\|_{H^{s, \gamma}_g}+\|\px^s U\|_{L^2(\T)})\|\ome\|_{H^{s, \gamma}_g}^2.
\end{split}
\end{equation*}
When $0<j\leq s-2$, using sobolev inequality and \eqref{cde} gives
\begin{equation*}
\begin{split}
&\Big|
\int_{\Om}(1+y)^{2\ga}\gse
\px^{j}\ve(\py \px^{s-j}\ome-\aep\px^{s-j}\ome) dxdy\Big|
\\
\leq &C \|(1+y)^{\ga}\gse\|_{L^2}
\|(1+y)^{-1}\px^{j}\ve\|_{L^{\ty}}\\
&~~~\cdot(
\|(1+y)^{\ga+1}\py \px^{s-j}\ome\|_{L^2}
+\|a\|_{L^{\ty}}
\|(1+y)^{\ga}\px^{s-j}\ome\|_{L^2})\\
\leq &C_{s,\ga,\si,\delta}(\|\ome\|_{H^{s, \gamma}_g}+\|\px^s U\|_{L^2(\T)})\|\ome\|_{H^{s, \gamma}_g}^2.
\end{split}
\end{equation*}
Thus, we have an estimate for $S_4$
\begin{equation}\label{S4dekongzhi}
\begin{split}
|S_4|
\leq C_{s,\ga,\si,\delta}(\|\ome\|_{H^{s, \gamma}_g}+\|\px^s U\|_{L^2(\T)})\|\ome\|_{H^{s, \gamma}_g}^2.
\end{split}
\end{equation}

For $S_5$ and $S_6$, similar to $I_4$ and $I_5$ in Proposition \ref{guji1}, we can deduce
\begin{equation}\label{S5deguji}
\begin{split}
|S_5|
\leq \frac{n}{12}\|(1+y)^{\ga+\al_2}\ome^{\frac{n-1}{2}}\py^2\px^{s-1} D^{\al}\ome\|_{L^2}^2
+C_{s,\ga,\si,\de,n}
(1+\|\px^{s} U\|_{L^2(\T)}+\|\ome\|_{H^{s, \gamma}_g})^{s}.
\end{split}
\end{equation}
and
\begin{equation}\label{S6deguji}
\begin{split}
|S_6|\leq \frac{n}{12}\|(1+y)^{\ga+\al_2}\ome^{\frac{n-1}{2}}\py^2\px^{s-1}\ome\|_{L^2}^2
+C_{s,\ga,\si,\de,n}
(1+\|\px^{s} U\|_{L^2(\T)}+\|\ome\|_{H^{s, \gamma}_g})^{s+1}.
\end{split}
\end{equation}

For $S_7$, we have
\begin{equation}\label{kuaiwanle}
\begin{split}
&\Big|\int_{\Om}(1+y)^{2\ga}\gse\aep\px^{j}(\ue-U)\px^{s-j+1}U dxdy\Big|\\
\leq&\|(1+y)^{\ga}\gse\|_{L^2}
\|(1+y)^{\ga-1}\px^{j}(\ue-U)\|_{L^{\ty}}
\|\aep\|_{L^{\ty}}
\|\px^{s-j+1}U\|_{L^{2}(\T)}\\
\leq&C_{s,\ga,\si,\de} \|\ome\|_{H^{s, \gamma}_g}
(\|\ome\|_{H^{s, \gamma}_g}+\|\px^s U\|_{L^2(\T)})
\|\px^{s+1}U\|_{L^2(\T)}.
\end{split}
\end{equation}
Hence, we can obtain
\begin{equation}\label{S7deguji}
\begin{split}
|S_7|
\leq C_{s,\ga,\si,\de} \|\ome\|_{H^{s, \gamma}_g}
(\|\ome\|_{H^{s, \gamma}_g}+\|\px^s U\|_{L^2(\T)})
\|\px^{s+1}U\|_{L^2(\T)}.
\end{split}
\end{equation}

For $S_8$, since $\ome\in H^{s,\gamma}_{\si,\de}$, we have by \eqref{cde}
\begin{equation*}
\begin{split}
|(1+y)\aep\ome^{n-2}\py^2\ome|=
&|(1+y)\ome^{n-3}\py\ome\py^2\ome|\\
\leq
&C_\de|(1+y)^{1+\si(3-n)-\si-1-\si-2}|\\
\leq&C_\de|(1+y)^{-(n-1)\si-2}|\\
\leq &C_\de.
\end{split}
\end{equation*}
Similarly, we can prove that
\begin{equation*}
\begin{split}
|(1+y)\aep\py^2\ome^{n-1}|\leq C_{n,\de},\quad|(1+y)\aep^3\ome^{n-1}|
\leq C_{\de}.
\end{split}
\end{equation*}
Hence, by using H\"{o}lder's inequality, it gives
\begin{equation}\label{S8deguji}
\begin{split}
|S_8|
\leq C_{s,\ga,\si,\de} \|\ome\|_{H^{s, \gamma}_g}
(\|\ome\|_{H^{s, \gamma}_g}+\|\px^s U\|_{L^2(\T)}).
\end{split}
\end{equation}

For $S_9$, it is easy to get
\begin{equation*}
\begin{split}
|\ome^{n-1}\py \aep|\leq C_\de (1+y)^{\si(1-n)-2}\leq C_\de.
\end{split}
\end{equation*}
Then there holds
\begin{equation}\label{S9deguji}
\begin{split}
|S_9|
\leq C_{s,\ga,\si,\de} \|\ome\|_{H^{s, \gamma}_g}^2.
\end{split}
\end{equation}

For $S_{10}$, it is easy to obtain
\begin{equation}\label{S10-bound}
\begin{split}
|S_{10}|
\leq &C \|(1+y)^{\ga}\gse\|_{L^2}
\|(1+y)^{\ga}g^1_{\epsilon}\|_{L^2}
\|\px^{s}U\|_{L^{\ty}(\T)}\\
\leq &C_{s,\ga,\si,\de} \|\ome\|_{H^{s, \gamma}_g}
(\|\ome\|_{H^{s, \gamma}_g}+\|\px^s U\|_{L^2(\T)})\|\px^{s}U\|_{L^{\ty}(\T)}.
\end{split}
\end{equation}

For $S_{11}$,  since $\ome\in H^{s,\gamma}_{\si,\de}$, we have
$$\|(1+y)\px \aep\|_{L^\ty}\leq C_\de,\quad \|\frac{\px \ome}{\ome}\|_{L^\ty}\leq C_\de.$$
Then,
\begin{equation}\label{S11-bound}
\begin{split}
|S_{11}|
\leq
& C\epsilon^2\|(1+y)^{\ga}\gse\|_{L^2}\|(1+y)\px \aep\|_{L^\ty}
\big(\|(1+y)^{\ga-1}\px^{s+1}(\ue-U)\|_{L^2}\\
&\quad+\|\frac{\px \ome}{\ome}\|_{L^\ty}\|\px^{s}(\ue-U)\|_{L^2}\big)\\
\leq& C_{s,\ga,\si,\de}\epsilon^2\|\ome\|_{H^{s, \gamma}_g}
\big(\|(1+y)^{\ga-1}\px^{s+1}(\ue-U)\|_{L^2}+\|(1+y)^{\ga-1}\px^{s}(\ue-U)\|_{L^2}\big)\\
\leq& C_{s,\ga,\si,\de}\epsilon^2\|\ome\|_{H^{s, \gamma}_g}
\big(\|\px^{s+1}U\|_{L^2(\T)}+\|(1+y)^{\ga}\px\gse\|_{L^2}+\|(1+y)^{\ga-1}\px^{s}(\ue-U)\|_{L^2}\big)\\
\leq& C_{s,\ga,\si,\de}\epsilon^2\|\ome\|_{H^{s, \gamma}_g}
\big(\|\px^{s+1}U\|_{L^2(\T)}+\|(1+y)^{\ga}\px\gse\|_{L^2}+\|\ome\|_{H^{s, \gamma}_g}+\|\px^s U\|_{L^2(\T)}\big)\\
\leq& \frac{\epsilon^2}{2}\|(1+y)^{\ga}\px\gse\|_{L^2}^2+
C_{s,\ga,\si,\de}\|\ome\|_{H^{s, \gamma}_g}
\big(\|\px^{s+1}U\|_{L^2(\T)}+\|\ome\|_{H^{s, \gamma}_g}\big).
\end{split}
\end{equation}
Substituting all estimates of $S_i$ into \eqref{haoweixian} and summing over $\al$ , we can establish that \eqref{faguji} holds.

\hfill $\square$

$Proof~of~\eqref{ptaug}$:
Applying $\py$ to \eqref{wodu}, we obtain
\begin{equation}\label{pywodu}
(\pt +\ue\px +\ve\py-\epsilon^2\px^2-n\om^{n-1}\py^2) \py\ome
=-\ome\px\ome+\px \ue\py\ome+
2n\py\ome^{n-1}\py^2\ome
+n\py^2\ome^{n-1}\py\ome.
\end{equation}
Then by \eqref{wodu} and \eqref{pywodu}, we can compute
\begin{equation}\label{a1}
\begin{split}
&(\pt +\ue\px +\ve\py-\epsilon^2\px^2-n\ome^{n-1}\py^2) \aep\\
=&-\epsilon^2\px^2 \aep-n\ome^{n-1}\py^2 \aep+
\frac{1}{\ome}(\pt +\ue\px +\ve\py)\py\ome
-\frac{\py\ome}{\ome^2}(\pt +\ue\px +\ve\py)\ome\\
=&2\epsilon^2\frac{\px\ome}{\ome}\px \aep-n\ome^{n-1}\py^2 \aep
+n\ome^{n-2}\py^3\ome-\px\ome+\aep\px \ue+2n(n-1)\aep\ome^{n-2}\py^2\ome\\
&+
n\aep\py^2\ome^{n-1}
-n\aep\ome^{n-2}\py^2\ome-n(n-1)\aep^3\ome^{n-1}.
\end{split}
\end{equation}
On the other hand, we can check that
\begin{equation}\label{a2}
\begin{split}
\py^2 \aep=\frac{\py^3\ome}{\ome}-\aep\frac{\py^2\ome}{\ome}-2\aep\py \aep.
\end{split}
\end{equation}
Substituting \eqref{a2} into \eqref{a1}, we get the following equation for $\aep$:
\begin{equation}\label{a3}
\begin{split}
&(\pt +\ue\px +\ve\py-\epsilon^2\px^2-n\ome^{n-1}\py^2) \aep\\
=&2\epsilon^2\frac{\px\ome}{\ome}\px \aep
-g^1_{\epsilon}+\aep\px U+2n(n-1)\aep\ome^{n-2}\py^2\ome+
n\aep\py^2\ome^{n-1}\\
&-n(n-1)\aep^3\ome^{n-1}+2n\aep\py \aep\ome^{n-1},
\end{split}
\end{equation}
where $g_\epsilon^{1}:=\px\ome-\aep\px (\ue-U)$.

Then Combining \eqref{ptauu}, \eqref{ptauwodu} and \eqref{a3} yields
\begin{equation}\label{Ig}
\begin{split}
&(\pt +\ue\px +\ve\py-\epsilon^2\px^2-n\ome^{n-1}\py^2)\gse\\
=
&-\sum_{j=1}^{s-1}\binom{s}{j}
\px^{j}\ue g^{s-j+1}_\epsilon
-\sum_{j=1}^{s-1}\binom{s}{j}
\px^{j}\ve(\py \px^{s-j}\ome-\aep\px^{s-j}\ome) \\
&+n\sum_{j=1}^{s}\binom{s}{j}
\px^{j}\ome^{n-1}(\py^2 \px^{s-j}\ome
-\aep\py \px^{s-j}\ome)
+n(n-1)\sum_{j=0}^{s}\binom{s}{j}
\px^{j}\ome^{n-2}\px^{s-j}|\py\ome|^2 \\
&+\sum_{j=0}^{s-1}\binom{s}{j}
\aep\px^{j}(\ue-U)\px^{s-j+1}U
+2n\ome^{n-1}\py \aep \gse - \gse g^1_{\epsilon}\px^{s} U\\
&-
\px^{s}(\ue-U)\{2n(n-1)\aep\ome^{n-2}\py^2\ome+n\aep\py^2\ome^{n-1}
-n(n-1)\aep^3\ome^{n-1}\}\\
&+2\epsilon^2 \px \aep\big[\px^{s+1}(\ue-U)-\frac{\px \ome}{\ome}\px^{s}(\ue-U)\big].
\end{split}
\end{equation}
Hence \eqref{ptaug} holds.

\hfill $\square$

$Proof~of~\eqref{S14}$:
By the boundary conditions $\eqref{feiniu}_4$, we have
\begin{equation}\label{gabianjie}
\begin{split}
&(\ome^{n-1} \gse\py \gse )|_{y=0}\\
=&
\{\ome^{n-1}(\px^{s}\ome+\aep\px^{s}U)
(\px^{s}\py\ome-\aep\px^{s}\ome+\py \aep\px^{s}U)\}|_{y=0}\\
=&\{\ome^{n-1}\px^{s}\ome\px^{s}\py\ome
+\aep\ome^{n-1}\px^{s}U\px^{s}\py\ome
-\aep\ome^{n-1}(\px^{s}\ome)^2\\
&-\aep^2\ome^{n-1}\px^{s}U\px^{s}\ome
+\py \aep\ome^{n-1}\px^{s}\ome\px^{s}U
+\aep\py \aep\ome^{n-1}(\px^{s}U)^2
\}|_{y=0}.
\end{split}
\end{equation}
First, since $\ome\in H^{s,\gamma}_{\si,\de}$ for $\ome$, it is easy to get there exists a $C_{\de}>0$, such that
\begin{equation}\label{youjie}
\begin{split}
|(\aep,\py \aep, \ome^{n-1})|_{y=0}\leq C_{\de}.
\end{split}
\end{equation}
Then, using H\"{o}lder's inequality \eqref{L1.5} and $\sigma\leq\frac{2}{n-1}$ yields that
\begin{equation}\label{jiandanbufen}
\begin{split}
&\Big|\int_{\T}\{-\aep\ome^{n-1}(\px^{s}\ome)^2
-\aep^2\ome^{n-1}\px^{s}U\px^{s}\ome
+\py \aep\ome^{n-1}\px^{s}\ome\px^{s}U
+\aep\py \aep\ome^{n-1}(\px^{s}U)^2
\}|_{y=0}dx\Big|\\
\leq & C_\de (\|\px^{s}\ome|_{y=0}\|^2_{L^2(\T)} + \|\px^{s}U\|^2_{L^2(\T)})\\
\leq & C_\de (\|\px^{s}\ome\|_{L^2}\|\px^{s}\py\ome\|_{L^2} + \|\px^{s}U\|^2_{L^2(\T)})\\
\leq & C_\de \big(\|\px^{s}\ome\|_{L^2}(\|\py\gse\|_{L^2}+\|\px^{s}\ome\|_{L^2}) + \|\px^{s}U\|^2_{L^2(\T)}\big)\\
\leq & C_\de \big(\|\px^{s}\ome\|_{L^2}^2
+\|(1+y)^{\gamma}\ome^{\frac{n-1}{2}}\py\gse\|_{L^2}
\|(1+y)^{-\gamma}\ome^{\frac{1-n}{2}}\|_{L^\ty}
\|\px^{s}\ome\|_{L^2} + \|\px^{s}U\|^2_{L^2(\T)}\big)\\
\leq &C_{s,\ga,\si,\de}\big( (\|\ome\|_{\Hg}+\|\px^{s}U\|_{L^2(\T)})^2
+\|(1+y)^{\gamma}\ome^{\frac{n-1}{2}}\py\gse\|_{L^2}(\|\ome\|_{\Hg}+\|\px^{s}U\|_{L^2(\T)})\big)\\
\leq &
\frac{n}{8}\|(1+y)^{\gamma}\ome^{\frac{n-1}{2}}\py\gse\|_{L^2}^2+
C_{s,\ga,\si,n,\de}(\|\ome\|_{\Hg}+\|\px^{s}U\|_{L^2(\T)})^2.
\end{split}
\end{equation}
Next, by the boundary conditions $\eqref{wodu}_3$, we have
\begin{equation}\label{nandian}
\begin{split}
\ome^{n-1}\px^{s}\py\ome|_{y=0}=
\frac{1}{n}\px^{s+1} \pe- \frac{1}{n}\sum_{j=1}^s\binom{s}{j}
\px^{j}\ome^{n-1}\px^{s-j}\py\ome|_{y=0}.
\end{split}
\end{equation}
Hence, putting \eqref{nandian} into \eqref{gabianjie}, we have
\begin{equation}\label{sufu}
\begin{split}
&\Big|\int_{\T}(\ome^{n-1}\px^{s}\ome\px^{s}\py\ome
+\aep\ome^{n-1}\px^{s}U\px^{s}\py\ome)|_{y=0} dx\Big|\\
\leq
& C_{n,\de}\Big|\int_{\T}(\px^{s}\ome+\px^{s}U)\px^{s+1} \pe|_{y=0} dx\Big|
+C_{n,\de} \sum_{j=1}^s\binom{s}{j}\Big|\int_{\T}(\px^{s}\ome+\px^{s}U)\px^{j}\ome^{n-1}\px^{s-j}\py\ome|_{y=0} dx\Big|.
\end{split}
\end{equation}
Similar to the estimate in \eqref{jiandanbufen}, it follows that
\begin{equation}\label{sufu1}
\begin{split}
\Big|\int_{\T}(\px^{s}\ome+\px^{s}U)\px^{s+1} \pe|_{y=0} dx\Big|
\leq
& C_{s,\ga,\si,n,\de}(\|\ome\|_{\Hg}+\|\px^{s}U\|_{L^2(\T)})
\|\px^{s+1} \pe\|_{L^2(\T)}\\
\leq
& C_{s,\ga,\si,n,\de}(\|\ome\|_{\Hg}^2+\|\px^{s}U\|_{L^2(\T)}^2+
\|\px^{s+1} \pe\|_{L^2(\T)}^2).
\end{split}
\end{equation}
By using Lemma \ref{faa}, we have
\begin{equation}\label{sufu1-1}
\px^{j}\ome^{n-1}
= \sum_{r=1}^{j} (n-1)^{\underline{r}} \, \ome^{n-1-r} \sum_{\substack{j_1 + \cdots + j_r = j \\ j_i \ge 1}} \frac{j!}{r!} \prod_{i=1}^r \frac{\px^{j_i} \ome}{j_i!}.
\end{equation}
The same trace and product estimates as in \eqref{J2-bound}-\eqref{J4}, together with \eqref{jiguji}, give
It follows from \eqref{jiguji} that
\begin{equation}\label{sufu2}
\begin{split}
&\Big|\int_{\T}\px^{s}\ome(\prod_{i=1}^r \px^{j_i} \ome)\px^{s-j}\py\ome|_{y=0}dx\Big|\\
\leq&
\frac{n}{8}\|(1+y)^{\gamma}\ome^{\frac{n-1}{2}}\py \gse\|_{L^2}^2
+\frac{n}{12}\|(1+y)^{\gamma}\ome^{\frac{n-1}{2}}\py^2\px^{s-1} \ome\|_{L^2}^2\\
&+C_{s,\ga,\si,n,\de}\big(1+\|\px^{s}U\|_{L^2(\T)}+\|\ome\|_{\Hg})^{2s}.
\end{split}
\end{equation}
Combining \eqref{jiandanbufen} and \eqref{sufu}-\eqref{sufu2}, it shows  \eqref{S14} holds.

\hfill $\square$

\subsection{Weighted estimation of $\ome$ and lower-order terms
}\label{jianchi}
In this subsection, we will derive the weighted $H^s$ estimate for $\ome$ and the weighted $L^\ty$ estimate for lower-order derivatives.
\begin{Proposition}
\label{guji-combined}
Under the assumptions of Proposition \ref{guji1}, the following estimate holds:
\begin{equation}\label{omeguji}
\begin{split}
\|\ome\|_{H^{s, \gamma}_g}^2
\leq& \big(\|\om_0\|_{H^{s, \gamma}_g}^2+\int_0^ t \phi(\tau) d\tau\big)
\Big\{1-(2s+1)C_{s,\gamma,\sigma,n,\delta}\big(\|\om_0\|_{H^{s, \gamma}_g}^2+\int_0^ t \phi(\tau) d\tau\big)^{2s+1}t\Big\}^{-\frac{1}{2s+1}},
\end{split}
\end{equation}
where
\begin{equation}\label{phit}
\begin{split}
\phi(t)=C_{s,\gamma,\sigma,n,\delta}\big((1+\|\px^s U\|_{L^2(\T)})^{4(s+1)}+\sum_{l=0}^{\frac{s}{2}}\|\partial_t^l\px^{s-2l+1}\pe\|_{L^{\ty}(\T)}^4\big).
\end{split}\end{equation}
\end{Proposition}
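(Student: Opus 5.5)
Combining Propositions \ref{guji1} and \ref{guji-tangential} gives a closed differential inequality for $E(t):=\|\ome(t)\|_{H^{s,\gamma}_g}^2$, which I would integrate as a Bernoulli-type ODE. By definition of the norm, $E$ is exactly the sum of the left-hand quantity in \eqref{normal-energy-estimate} and $\|(1+y)^\gamma\gse\|_{L^2}^2$ from \eqref{faguji}. So I add \eqref{normal-energy-estimate} and \eqref{faguji} and first absorb the dissipative cross terms. The term $\frac n4\|(1+y)^\gamma\ome^{\frac{n-1}{2}}\py\gse\|_{L^2}^2$ on the right of \eqref{normal-energy-estimate} is absorbed by $\frac n2\|(1+y)^\gamma\ome^{\frac{n-1}{2}}\py\gse\|_{L^2}^2$ on the left of \eqref{faguji}. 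The term $\frac n4\|(1+y)^\gamma\ome^{\frac{n-1}{2}}\py^2\px^{s-1}\ome\|_{L^2}^2$ on the right of \eqref{faguji} is absorbed by the $\alpha=(s-1,1)$ summand of $\frac n2\sum\|(1+y)^{\gamma+\alpha_2}\ome^{\frac{n-1}{2}}\py D^\alpha\ome\|_{L^2}^2$. This is allowed because $\alpha_1=s-1$ is admissible and $(1+y)^{\gamma}\le(1+y)^{\gamma+1}$. Dropping the remaining nonnegative dissipation and the $\epsilon^2$ terms leaves
\[
\frac{d}{dt}E\le C\Big((1+\|\px^{s+1}U\|_{L^2(\T)}+\|\ome\|_{\Hg})^{4(s+1)}+\sum_{l=0}^{s/2}\|\partial_t^l\px^{s-2l+1}\pe\|_{L^\infty(\T)}^4\Big).
\]
Here I use that the power $2s$ from \eqref{faguji} is dominated by $4(s+1)$ after the $1+$ shift.

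Next I split the polynomial: $(1+a+b)^{4(s+1)}\le C\big((1+a)^{4(s+1)}+b^{4(s+1)}\big)$ with $b^2=E$, so $b^{4(s+1)}=E^{2s+2}$. For $E$ bounded below by a fixed constant this gives
\[
E'\le \phi(t)+C E^{2s+2}.
\]
If $E$ is small, $E^{2s+2}$ is dominated by the constant already inside $\phi$, so no lower bound on $E$ is actually needed. The $U$ norms are bounded uniformly in time by \eqref{outer-flow-regularity}, and the index shift from $\px^{s}U$ to $\px^{s+1}U$ is harmless there. The pressure terms are also controlled through \eqref{BL} and \eqref{outer-flow-regularity}.

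Then I integrate with the standard comparison trick. Set $F(t)=E(0)+\int_0^t\phi+C\int_0^tE^{2s+2}$, so that $E\le F$ and
\[
\frac{d}{dt}\Big(F-\int_0^t\phi\Big)\le CF^{2s+2}.
\]
Since $A(t):=E(0)+\int_0^t\phi$ is nondecreasing, I would compare on $[0,t]$ with the frozen value $A=A(t)$, treating $G=F-\int_0^\cdot\phi+\int_0^t\phi$ as a function with $G(0)\le A$ and $G'\le CG^{2s+2}$. Integrating $G^{-(2s+2)}G'\le C$ gives
\[
G^{-(2s+1)}\ge A^{-(2s+1)}-(2s+1)Ct,
\]
which is exactly $E\le A\{1-(2s+1)CA^{2s+1}t\}^{-1/(2s+1)}$, that is, \eqref{omeguji}. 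This holds as long as the bracket is positive.

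The main obstacle is the bookkeeping in the first step. The absorption coefficients, $\frac n4$ against $\frac n2$ in both cases, must leave a nonnegative remainder. One must also confirm that the constants in Propositions \ref{guji1} and \ref{guji-tangential} depend only on $s,\gamma,\sigma,n,\delta$ and not on $\epsilon$. That independence is precisely what makes the resulting $T$ uniform in $\epsilon$.
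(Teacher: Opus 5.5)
Your argument is correct and is essentially the paper's proof: add \eqref{normal-energy-estimate} and \eqref{faguji}, absorb the two cross dissipation terms (the $\alpha=(s-1,1)$ summand handles the $\py^2\px^{s-1}\ome$ term), split off $\|\ome\|_{\Hg}^{4(s+1)}$ to obtain $E'\le\phi+CE^{2s+2}$, and integrate by ODE comparison, where your frozen-$A$ device spells out what the paper compresses into ``the comparison principle.'' One small point, which the paper also glosses over: the $\|\px^{s+1}U\|_{L^2(\T)}$ coming from \eqref{faguji} cannot be bounded by $\|\px^sU\|_{L^2(\T)}$, so strictly $\phi$ must contain the higher norm rather than being literally the $\phi$ of \eqref{phit}; this is harmless under \eqref{outer-flow-regularity}.
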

\noindent{\bf Proof.} From Proposition \ref{guji1}, Proposition \ref{guji-tangential} and the definition of $\|\ome\|_{H^{s, \gamma}_g}$, it follows that
\begin{equation}\label{combined-energy-ode}
\begin{split}
\frac{d}{dt}\|\ome\|_{\Hg}^2
\leq &
C_{s,\gamma,\sigma,n,\delta}
\big((1+\|\px^s U\|_{L^2(\T)}+\|\ome\|_{\Hg})^{4(s+1)}
+\sum_{l=0}^{\frac{s}{2}}\|\partial_t^l\px^{s-2l+1}\pe\|_{L^{\ty}(\T)}^4\big)\\
\leq &
C_{s,\gamma,\sigma,n,\delta}\|\ome\|_{\Hg}^{4(s+1)}
+C_{s,\gamma,\sigma,n,\delta}\big((1+\|\px^s U\|_{L^2(\T)})^{4(s+1)}\\
&+\sum_{l=0}^{\frac{s}{2}}\|\partial_t^l\px^{s-2l+1}\pe\|_{L^{\ty}(\T)}^4\big)
\end{split}
\end{equation}
Consequently, the comparison principle of ordinary differential equations gives that
\begin{equation}\label{energy-ode-solution}
\begin{split}
\|\ome\|_{H^{s, \gamma}_g}^2
\leq& \big(\|\om_0\|_{H^{s, \gamma}_g}^2+\int_0^ t \phi(\tau) d\tau\big)
\Big\{1-(2s+1)C_{s,\gamma,\sigma,n,\delta}\big(\|\om_0\|_{H^{s, \gamma}_g}^2+\int_0^ t \phi(\tau) d\tau\big)^{2s+1}t\Big\}^{-\frac{1}{2s+1}}
\end{split}
\end{equation}
when
\begin{equation}\label{energy-ode-condition}
\begin{split}
1-(2s+1)C_{s,\gamma,\sigma,n,\delta}\big(\|\om_0\|_{H^{s, \gamma}_g}^2+\int_0^ t \phi(\tau) d\tau\big)^{2s+1}t\geq 0.
\end{split}
\end{equation} Choosing $t>0$ small enough ensures this condition holds. This completes the proof of Proposition \ref{guji-combined}.

\hfill $\square$

Next, we will derive the $L^\ty$ estimate for $\sum\limits_{|\alpha| \leq 2}\left|(1+y)^{\sigma+\alpha_2} D^\alpha \ome\right|^2$ and the lower bound estimate for $(1+y)^{\si}\ome$.

\begin{Proposition}\label{xianyanguji-linfty}
Under the same regularity and positivity assumptions as in Proposition \ref{guji1},
assume in addition that $1<n<\frac73$ and $0<\epsilon\leq1$.  Define
\begin{equation*}
 I(t):=\sum_{|\alpha| \leq 2}\left|(1+y)^{\sigma+\alpha_2}D^\alpha\ome(t)\right|^2,
 \qquad
 A_\epsilon(t):=\sup_{0\leq\tau\leq t}\|\ome(\tau)\|_{\Hg},
\end{equation*}
and
\begin{equation*}
 G_\epsilon(t):=A_\epsilon(t)
 +\sup_{0\leq\tau\leq t}\|\px^sU(\tau)\|_{L^2(\T)},
 \qquad
 \Lambda_\epsilon(t):=C_{s,\gamma,\sigma,n,\delta}\bigl(1+G_\epsilon(t)\bigr).
\end{equation*}
Then the boundary-growth estimate gives
\begin{equation}\label{It-growth}
 \|I(t)\|_{L^\ty(\Om)}
 \leq\left\{\|I(0)\|_{L^\ty(\Om)}
 +C_{s,\gamma,n}\bigl(1+A_\epsilon(t)\bigr)A_\epsilon^2(t)t\right\}
 e^{\Lambda_\epsilon(t)t}.
\end{equation}
Moreover, for all sufficiently small $t>0$,
\begin{equation}\label{wxiajie}
\begin{split}
\min_\Om(1+y)^{\si}\ome(t)
\geq{}&\left\{1-\Lambda_\epsilon(t)t
e^{\Lambda_\epsilon(t)t}\right\}\\
&\quad\times\left\{
\min_\Om(1+y)^{\si}\om_0
-C_{s,\gamma,\sigma,n,\delta}
\bigl(1+A_\epsilon(t)\bigr)A_\epsilon(t)t\right\}.
\end{split}
\end{equation}
\end{Proposition}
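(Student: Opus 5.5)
The plan follows the strategy of \cite[Section~5.2]{NM}: derive parabolic equations for the weighted quantities and apply Lemma~\ref{jida} and Lemma~\ref{jixiao}. Write $\mathcal{L}:=\pt+\ue\px+\ve\py-\epsilon^2\px^2-n\ome^{n-1}\py^2$.

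For \eqref{It-growth}, set $h_\alpha:=(1+y)^{\sigma+\alpha_2}D^\alpha\ome$ for $|\alpha|\le2$. Applying $D^\alpha$ to \eqref{wodu} and conjugating with the weight gives
\[
\mathcal{L}h_\alpha=(1+y)^{\sigma+\alpha_2}\bigl(\text{commutators}\bigr)+\bigl[\text{weight terms from }\ve\py,\ n\ome^{n-1}\py^2\bigr]h.
\]
The weight terms are $\ve\frac{\sigma+\alpha_2}{1+y}h_\alpha$, together with terms of the form $n\ome^{n-1}(1+y)^{-1}\py h_\alpha$ and $n\ome^{n-1}(1+y)^{-2}h_\alpha$. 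These are bounded multiples of $h$ or of $\py h_\alpha$. A first-order term can be absorbed into the drift coefficient $b_2$, since $\ome^{n-1}/(1+y)$ is bounded. The commutators $D^\beta\ue\,\px D^{\alpha-\beta}\ome$, $D^\beta\ve\,\py D^{\alpha-\beta}\ome$, $D^\beta(\ome^{n-1})\py^2D^{\alpha-\beta}\ome$ and $D^\beta(\ome^{n-2})D^{\alpha-\beta}|\py\ome|^2$ split into two kinds:
\begin{itemize}
\item terms linear in some $h_{\alpha'}$ with $|\alpha'|\le2$, whose coefficients ($\px\ue$, $\ve/(1+y)$, $\ome^{n-2}\py\ome(1+y)$, and so on) are bounded by $C_\delta(1+G_\epsilon)$ via \eqref{vdeguji} and the bounds \eqref{K111}, \eqref{K11-1}, using $\sigma\le\frac{2}{n-1}$ to control the negative powers of $\ome$;
\item terms involving third derivatives ($\py^2D^{\alpha-\beta}\ome$ with $|\alpha|=2$, or $\py D^\alpha\ome$). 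These are not controlled by $I$, so I estimate them in $L^\infty$ through Lemma~\ref{chazhi} and Proposition~\ref{shuaijian} ($s\ge6$), which bounds them by $C(1+A_\epsilon)A_\epsilon$ times a nonnegative weight power.
\end{itemize}
Then $\mathcal{L}I=2\sum h_\alpha\mathcal{L}h_\alpha-2n\ome^{n-1}\sum|\py h_\alpha|^2\le \Lambda_\epsilon I+C(1+A_\epsilon)A_\epsilon\sqrt I$. Taking the sup, $\sqrt I\le 1+I$ gives $\mathcal{L}I\le\Lambda_\epsilon I+C(1+A_\epsilon)A_\epsilon^2$. Apply Lemma~\ref{jida} to $I-C(1+A_\epsilon)A_\epsilon^2t$, or equivalently use Duhamel with $e^{\Lambda t}$.

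The main obstacle is the boundary value $I|_{y=0}$, which Lemma~\ref{jida} requires. Terms with $\alpha_2\le1$ at $y=0$ cost no extra structure. For $\py\ome$ and $\px\py\ome$ I use \eqref{yijie}, $\py\ome=\px\pe/(n\ome^{n-1})$, which is bounded via $\ome|_{y=0}\ge\delta$. For $\py^2\ome|_{y=0}$ I use Lemma~\ref{bianjiejiangjie} with $k_2=1$, or else the time-integrated trace $|f(t)|_{y=0}\le|f(0)|+\int_0^t|\pt f|$ with $\pt\ome$ controlled by $A_\epsilon$. This last route produces the additive $C(1+A_\epsilon)A_\epsilon^2t$ correction. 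I would then run Lemma~\ref{jida} on $\tilde I:=I-Ct(1+A)A^2$, whose boundary values are bounded by $\|I(0)\|$ plus that correction.

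For \eqref{wxiajie}, let $H:=(1+y)^\sigma\ome$. From \eqref{wodu},
\[
\mathcal{L}H=fH+\bigl(\text{lower order}\bigr),
\]
with $f=\sigma\ve/(1+y)$ plus weight terms such as $n\ome^{n-1}\sigma(\sigma+1)(1+y)^{-2}$ and $n(n-1)\ome^{n-2}|\py\ome|^2/\ome$. The remaining first-order weight term and the nonlinear term $n(n-1)\ome^{n-2}(\py\ome)\py H/(1+y)^\sigma$ go into $b_2$; both are bounded using $\sigma\le 2/(n-1)$. Boundedness of $f$ by $\Lambda_\epsilon$ follows from \eqref{vdeguji}. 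At $y=0$ the boundary value $H|_{y=0}=\ome|_{y=0}$ changes in time by at most $C(1+A_\epsilon)A_\epsilon t$, through the same time-integration of $\pt\ome$ on the boundary. Hence $\kappa(t)\ge\min H(0)-C(1+A)At$, and Lemma~\ref{jixiao} yields \eqref{wxiajie} for $t$ small enough that $\kappa\ge0$.
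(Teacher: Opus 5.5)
\paragraph{Verdict.} The interior analysis is essentially sound, and the proof of \eqref{wxiajie} is the paper's argument. Keeping $n(n-1)\ome^{n-3}|\py\ome|^2$ as a bounded part of $f$, instead of discarding the corresponding term as nonnegative, is fine. The gap is in the boundary step for \eqref{It-growth}, which you correctly flag as the main obstacle.

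\paragraph{The gap: boundary values in Lemma~\ref{jida}.} To reach \eqref{It-growth}, $\|I(\tau)|_{y=0}\|_{L^\infty(\T)}$ must be bounded by $\|I(0)\|_{L^\infty}$ plus a term of order $\tau$. Your primary route does not give this.
\begin{itemize}
\item Using \eqref{yijie} gives $|\py\ome|_{y=0}|\le C_\delta\|\px\pe\|_{L^\infty(\T)}$, and similarly for $\px\py\ome$. That bound is fixed by the outer flow and the crude lower bound $\ome|_{y=0}\ge\delta$, not by the initial value.
\item Lemma~\ref{bianjiejiangjie} with $k_2=1$ concerns $\ome^{n-1}\py^3\ome|_{y=0}$, so it says nothing about $\py^2\ome|_{y=0}$. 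Only odd normal derivatives are tied to the pressure.
\item For $\alpha_2=0$ there is no boundary relation at all.
\end{itemize}
With such bounds Lemma~\ref{jida} only yields
\[
\|I(t)\|_{L^\infty}\le e^{\Lambda_\epsilon t}\max\bigl\{\|I(0)\|_{L^\infty},\,C_\delta\|\px\pe\|_{L^\infty(\T)}^2+\cdots\bigr\}.
\]
This is not \eqref{It-growth}, and it does not let you propagate $\|I\|_{L^\infty}\le\delta^{-2}$ from $\|I(0)\|_{L^\infty}\le\frac{1}{4\delta^2}$ in the continuity argument.

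\paragraph{The fix.} Use the device you list only as a fallback, for every $|\alpha|\le2$ and with no boundary condition at all. Since $\ue=\ve=0$ on $y=0$, evaluating \eqref{alwodu} there expresses $\pt D^\alpha\ome|_{y=0}$ through derivatives of order at most four. Because $s\ge6$, Lemma~\ref{chazhi} controls these by the weighted $H^s$ norm. Hence $\|\pt I|_{y=0}\|_{L^\infty(\T)}\le C(1+A_\epsilon)A_\epsilon^2$, and integrating in time gives $\|I(\tau)|_{y=0}\|_{L^\infty(\T)}\le\|I(0)\|_{L^\infty}+C(1+A_\epsilon)A_\epsilon^2\tau$. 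This is exactly the input Lemma~\ref{jida} needs. The paper does precisely this.

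\paragraph{Smaller slips in the interior step.}
\begin{itemize}
\item From $\mathcal{L}I\le\Lambda_\epsilon I+C(1+A_\epsilon)A_\epsilon\sqrt I$, the bound $\sqrt I\le1+I$ produces an $I$-coefficient of size $(1+A_\epsilon)A_\epsilon$. That is quadratic in $A_\epsilon$, so it is not bounded by $\Lambda_\epsilon$. Use $\sqrt I\le\delta^{-1}$ from the a priori class instead.
\item Alternatively, follow the paper: absorb the third-order terms pointwise into the dissipation $-2n\ome^{n-1}\sum|\py B_\alpha|^2$, using that $(1+y)^{\beta_2}\ome^{\frac{n-3}{2}}D^\beta\ome$ is bounded for $|\beta|=1$. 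This gives the closed inequality $\mathcal{L}I\le\Lambda_\epsilon I$ with no interior source.
\item Your own treatment of those terms through Proposition~\ref{shuaijian} does work, since the resulting weight exponent is $-\sigma(n-1)+\frac{\sigma-\gamma}{s-4}-2<0$.
\item The terms $E_\alpha\py h_\alpha$ have $\alpha_2$-dependent coefficients, so they cannot be placed in one common drift for $I$. Absorb them into the dissipation by Young's inequality instead.
\item If an interior source $K$ remains, Lemma~\ref{jida} applies to $e^{-\Lambda_\epsilon t}I-Kt$, not to $I-Kt$.
\end{itemize}
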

\noindent{\bf Proof.} For $|\al|\leq 2$, we let $$B_{\al}:=(1+y)^{\sigma+\al_2} D^\alpha \ome,$$ then
$I(t)=\sum\limits_{|\alpha| \leq 2}|B_{\al}|^2$ and $(1+y)^{\si}\ome=B_{(0,0)}$. By a direct computation, we deduce that $B_{\al}$ satisfies
\begin{equation}\label{bbll}
\begin{split}
(\pt +\ue\px +\ve\py-\epsilon^2\px^2-n\ome^{n-1}\py^2) B_{\al}
=
D_{\al}B_{\al}+E_{\al}\py B_{\al}+F_{\al},
\end{split}
\end{equation}
where
\begin{equation*}
\begin{split}
D_{\al}=(\si+\al_2)\frac{\ve}{1+y}
+n(\si+\al_2)(\si+\al_2+1)\frac{\ome^{n-1}}{(1+y)^2},
\quad
E_{\al}=-2n(\si+\al_2)\frac{\ome^{n-1}}{1+y},
\end{split}
\end{equation*}
and
\begin{equation*}
\begin{split}
F_{\al}=
&-(1+y)^{\si+\al_2}\sum_{0<\beta\leq\al}\binom{\al}{\beta}
\left(D^{\beta}\ue\px D^{\al-\beta}\ome+D^{\beta}\ve\py D^{\al-\beta}\ome\right)\\
&+n(1+y)^{\si+\al_2}\sum_{0<\beta\leq\al}\binom{\al}{\beta}
D^{\beta}\ome^{n-1}\py^2 D^{\al-\beta}\ome\\
&+n(n-1)(1+y)^{\si+\al_2}\sum_{0\leq\beta\leq\al}\binom{\al}{\beta}
D^{\beta}\ome^{n-2}D^{\al-\beta}|\py\ome|^2.
\end{split}
\end{equation*}
After multiplying equation \eqref{bbll} by $B_{\al}$  and summing over $\al$ with $|\al|\leq 2$, the evolution equation for $I$ is derived as follows
\begin{equation}\label{iiii}
\begin{split}
&(\pt +\ue\px +\ve\py-\epsilon^2\px^2-n\ome^{n-1}\py^2) I\\
=&-2\sum_{|\alpha| \leq 2}\Big(\epsilon^2|\px B_{\al}|^2+n\ome^{n-1}|\py B_{\al}|^2\Big)
+2 \sum_{|\alpha| \leq 2}\Big(D_{\al}|B_{\al}|^2+E_{\al}B_{\al}\py B_{\al}+F_{\al}B_{\al}\Big)
.
\end{split}
\end{equation}
Using $\ome\in H^{s,\gamma}_{\si,\de}$ and $\|\frac{\ve}{1+y}\|_{L^\ty}\leq C_{s,\gamma,\sigma,n,\delta}(\|\ome\|_{\Hg}+\|\px^s U\|_{L^2(\T)})$, we obtain
\begin{equation}\label{DE}
\begin{split}
|D_{\al}|\leq C_{s,\gamma,\sigma,n,\delta} (1+\|\px^s U\|_{L^2(\T)}+\|\ome\|_{\Hg}),\quad
|E_{\al}|\leq C_{\sigma,n}\frac{\ome^{n-1}}{1+y}.
\end{split}
\end{equation}
From \eqref{DE}, it follows that
\begin{equation}\label{i11}
\begin{split}
&\sum_{|\alpha| \leq 2}\Big(D_{\al}|B_{\al}|^2+E_{\al}B_{\al}\py B_{\al}\Big)\\
\leq
&C_{s,\gamma,\sigma,n,\delta} (1+\|\px^s U\|_{L^2(\T)}+\|\ome\|_{\Hg})\sum_{|\alpha| \leq 2}|B_{\al}|^2
+C_{\sigma,n}\sum_{|\alpha| \leq 2}B_{\al} \ome^{\frac{n-1}{2}}\py B_{\al}
\|(1+y)^{-1}\ome^{\frac{n-1}{2}}\|_{L^\ty}\\
\leq
&\frac{n}{2}\sum_{|\alpha| \leq 2}|\ome^{\frac{n-1}{2}}\py B_{\al}|^2+C_{s,\gamma,\sigma,n,\delta} (1+\|\px^s U\|_{L^2(\T)}+\|\ome\|_{\Hg})\sum_{|\alpha| \leq 2}|B_{\al}|^2.
\end{split}
\end{equation}
On the other hand, for $D^{\beta}\ue$, if $\beta_2>0$, then by \eqref{chabuduo}
\begin{equation*}
\begin{split}
|(1+y)^{\beta_2}D^{\beta}\ue|=&|(1+y)^{\beta_2}D^{\beta-e_2}\ome|\\
\leq &|(1+y)^{\ga+\beta_2-1}D^{\beta-e_2}\ome|
\leq C_{s,\gamma,\sigma,n,\delta} (\|\px^s U\|_{L^2(\T)}+\|\ome\|_{\Hg}).
\end{split}
\end{equation*}
If $|\beta_2|=0$, then by \eqref{L5}
\begin{equation*}
\begin{split}
|D^{\beta}\ue|=|\py^{-1}D^{\beta}\ome|
\leq C_{s,\gamma,\sigma,n,\delta} (\|\px^s U\|_{L^2(\T)}+\|\ome\|_{\Hg}).
\end{split}
\end{equation*}
Combining the above two cases, we have
\begin{equation}\label{xixi}
\begin{split}
|(1+y)^{\beta_2}D^{\beta}\ue|\leq C_{s,\gamma,\sigma,n,\delta} (\|\px^s U\|_{L^2(\T)}+\|\ome\|_{\Hg}).
\end{split}
\end{equation}
For $D^{\beta}\ve$, similarly, we have
\begin{equation}\label{xixiv}
\begin{split}
|(1+y)^{\beta_2-1}D^{\beta}\ve|
\leq C_{s,\gamma,\sigma,n,\delta} (1+\|\px^s U\|_{L^2(\T)}+\|\ome\|_{\Hg}).
\end{split}
\end{equation}
Hence
\begin{equation}\label{i111}
\begin{split}
&\sum_{|\alpha| \leq 2}(1+y)^{\si+\al_2}\Big(D^{\beta}\ue\px D^{\al-\beta}\ome+D^{\beta}\ve\py D^{\al-\beta}\ome\Big) B_{\al}\\
\leq
&C_{s,\gamma,\sigma,n,\delta}(1+\|\px^s U\|_{L^2(\T)}+\|\ome\|_{\Hg})\sum_{|\alpha| \leq 2}
(|B_{\al-\beta+e_1}|+|B_{\al-\beta+e_2}|)|B_{\al}|\\
\leq
&C_{s,\gamma,\sigma,n,\delta}(1+\|\px^s U\|_{L^2(\T)}+\|\ome\|_{\Hg})\sum_{|\alpha| \leq 2}|B_{\al}|^2.
\end{split}
\end{equation}
Similarly, from $|\al|\leq 2$ and $0\leq \beta\leq \al$, we obtain
\begin{equation}\label{i112}
\begin{split}
\sum_{|\alpha| \leq 2}(1+y)^{\si+\al_2}D^{\beta}\ome^{n-2}D^{\al-\beta}|\py\ome|^2 B_{\al}
\leq
C_{s,\gamma,\sigma,n,\delta}(1+\|\px^s U\|_{L^2(\T)}+\|\ome\|_{\Hg})\sum_{|\alpha| \leq 2}|B_{\al}|^2.
\end{split}
\end{equation}
When $|\al|=2$ and $|\beta|=1$, the term $\py^2 D^{\al-\beta}\ome\notin I(t)$, which will cause difficulties. Using $\ome\in H^{s,\gamma}_{\si,\de}$ and $1<n\leq 3$, we obtain
\begin{equation}\label{i113}
\begin{split}
&\sum_{|\alpha| = 2}(1+y)^{\si+\al_2}D^{\beta}\ome^{n-1}\py^2 D^{\al-\beta}\ome B_{\al}\\
=
&(n-1)\sum_{|\alpha| = 2}(1+y)^{\si+\al_2}\ome^{n-2}D^{\beta}\ome\py D^{\al-\beta+e_2}\ome B_{\al}\\
\leq
&C_n\sum_{|\alpha| = 2}|(1+y)^{\si+\al_2-\beta_2}\ome^{\frac{n-1}{2}}\py D^{\al-\beta+e_2}\ome|
|(1+y)^{\beta_2}\ome^{\frac{n-3}{2}}D^{\beta}\ome||B_{\al}|\\
\leq
&\frac{n}{2}\sum_{|\alpha| \leq 2}|\ome^{\frac{n-1}{2}}\py B_{\al}|^2+C_{s,\gamma,\sigma,n,\delta} |B_{\al}|^2.
\end{split}
\end{equation}
Except for this case, we easily obtain
\begin{equation}\label{i114}
\begin{split}
\sum_{|\alpha|\leq 2}(1+y)^{\si+\al_2}D^{\beta}\ome^{n-1}\py^2 D^{\al-\beta}\ome B_{\al}
\leq
C_{s,\gamma,\sigma,n,\delta}(1+\|\px^s U\|_{L^2(\T)}+\|\ome\|_{\Hg})\sum_{|\alpha| \leq 2}|B_{\al}|^2.
\end{split}
\end{equation}
Combining \eqref{iiii}, \eqref{i11} and \eqref{i111}-\eqref{i114}, we obtain
\begin{equation}\label{iiie}
\begin{split}
(\pt +\ue\px +\ve\py-\epsilon^2\px^2-n\ome^{n-1}\py^2) I
\leq
C_{s,\gamma,\sigma,n,\delta}(1+\|\px^s U\|_{L^2(\T)}+\|\ome\|_{\Hg})I.
\end{split}
\end{equation}

By the definition of $G_\epsilon(t)$,
$$C_{s,\gamma,\sigma,n,\delta}(1+\|\px^s U\|_{L^2(\T)}+\|\ome\|_{\Hg})\leq \Lambda_\epsilon(t).$$
Using the classical maximum principle for parabolic equations (see Lemma \ref{jida}) to $I(t)$ yields
\begin{equation}\label{jidazhi}
\begin{split}
\|I(t)\|_{L^{\ty}}
\leq \max\left\{
e^{\Lambda_\epsilon(t)t}\|I(0)\|_{L^{\ty}},~
\max\limits_{\tau\in[0,t]}\{e^{\Lambda_\epsilon(t)(t-\tau)}\|I(\tau)|_{y=0}\|_{L^{\ty}(\T)}\}
\right\}.
\end{split}
\end{equation}

Thus it remains to estimate the boundary value in \eqref{jidazhi}. Using the equation for $D^{\al}\ome$, we have
\begin{equation}\label{DW}
\begin{split}
\pt D^{\al}\ome|_{y=0}=\epsilon^2\px^2D^{\al}\ome|_{y=0} +n\ome^{n-1}\py^2 D^{\al}\ome|_{y=0}+Q_\al^1|_{y=0}+Q_\al^2|_{y=0}+Q_\al^3|_{y=0},
\end{split}
\end{equation}
for any $|\al|\leq2$, and where
\begin{equation}\label{Ea123}
\begin{split}
Q_\al^1&=-\sum_{0<\beta\leq\al}\binom{\al}{\beta}
\left(D^{\beta}\ue\px D^{\al-\beta}\ome +D^{\beta}\ve\py D^{\al-\beta}\ome\right),\\
Q_\al^2&=n\sum_{0<\beta\leq\al}\binom{\al}{\beta}
D^{\beta}\ome ^{n-1}\py^2 D^{\al-\beta}\ome,\\
Q_\al^3&=n(n-1)\sum_{0\leq\beta\leq\al}\binom{\al}{\beta}
D^{\beta}\ome^{n-2}D^{\al-\beta}|\py\ome|^2.
\end{split}
\end{equation}
From Lemma \ref{chazhi} and $\ue|_{y=0}=\ve|_{y=0}=0$, it follows that
\begin{equation}\label{Qa1}
\begin{split}
\|Q_\al^1|_{y=0}\|_{L^\ty(\T)}\leq C_{s,\gamma}\|\ome\|_{\Hg}^2.
\end{split}
\end{equation}
Similarly, for $s\geq 6$, it holds that
\begin{equation}\label{Qa23}
\begin{split}
&\|Q_\al^2|_{y=0}\|_{L^\ty(\T)}\leq C_{s,\gamma}\|\ome\|_{\Hg}^2,\\
&\|Q_\al^3|_{y=0}\|_{L^\ty(\T)}\leq C_{s,\gamma}\|\ome\|_{\Hg}^2.
\end{split}
\end{equation}
In addition, for $\epsilon\in[0,1]$, we get
\begin{equation}\label{Qaa}
\begin{split}
\|\epsilon^2\px^2D^{\al}\ome|_{y=0} +n\ome^{n-1}\py^2 D^{\al}\ome|_{y=0}\|_{L^\ty(\T)}\leq C_{s,\gamma}\|\ome\|_{\Hg}+C_{s,\gamma}\|\ome\|_{\Hg}^2.
\end{split}
\end{equation}
Therefore, combining \eqref{DW} and \eqref{Qa1}-\eqref{Qaa}, for $s\geq 6$, we obtain
\begin{equation}\label{ptI}
\begin{split}
\|\pt I|_{y=0}\|_{L^\ty(\T)}\leq &C_{s,\gamma}(1+\|\ome\|_{\Hg})\|\ome\|_{\Hg}^2,
\end{split}
\end{equation}
which implies by direct integration
\begin{equation}\label{bianjie22}
\begin{split}
\|I(t)|_{y=0}\|_{L^{\ty}(\T)}
\leq
&\|I(0)|_{y=0}\|_{L^{\ty}(\T)}+\int_{0}^{t}\|\pt I(\tau)|_{y=0}\|_{L^{\ty}(\T)}d\tau\\
\leq
&\|I(0)\|_{L^{\ty}}+C_{s,\gamma}(1+A_\epsilon(t))A_\epsilon^2(t)t.
\end{split}
\end{equation}
which shows \eqref{It-growth}.

Next, we will give a lower bound estimate for $(1+y)^\sigma \ome$. Clearly,
\begin{equation*}
B_{(0,0)}:=(1+y)^\sigma\ome.
\end{equation*}
Taking $\alpha=(0,0)$ in \eqref{bbll} and moving the first-order term $E_{(0,0)}\py B_{(0,0)}$ to the left-hand side, we obtain
\begin{equation}\label{B0-equation}
\begin{split}
&\left\{\pt+\ue\px+
\left(\ve+2n\sigma\frac{\ome^{n-1}}{1+y}\right)\py
-\epsilon^2\px^2-n\ome^{n-1}\py^2\right\}B_{(0,0)}\\
&\qquad=D_{(0,0)}B_{(0,0)}
+n(n-1)(1+y)^\sigma\ome^{n-2}|\py\ome|^2,
\end{split}
\end{equation}
where
\begin{equation*}
D_{(0,0)}=\sigma\frac{\ve}{1+y}
+n\sigma(\sigma+1)\frac{\ome^{n-1}}{(1+y)^2}.
\end{equation*}
Since $n>1$, the last term on the right-hand side of \eqref{B0-equation} is nonnegative.  Furthermore, the weighted Sobolev estimates used in \eqref{DE}, together with the definition of
$G_\epsilon(t)$, imply
\begin{equation}\label{B0-coefficients}
\left\|D_{(0,0)}\right\|_{L^\ty([0,t]\times\Om)}
+\left\|\frac{1}{1+y}\left(\ve
+2n\sigma\frac{\ome^{n-1}}{1+y}\right)\right\|_{L^\ty([0,t]\times\Om)}
\leq\Lambda_\epsilon(t).
\end{equation}
Consequently, the proof of the minimum principle in Lemma \ref{jixiao}
yields
\begin{equation}\label{B0-minimum-principle}
\min_\Om B_{(0,0)}(t)\geq
\left\{1-\Lambda_\epsilon(t)t e^{\Lambda_\epsilon(t)t}\right\}\kappa_\epsilon(t),
\end{equation}
where
\begin{equation*}
\kappa_\epsilon(t):=\min\left\{
\min_\Om(1+y)^\sigma\om_0,
\min_{[0,t]\times\T}\ome\big|_{y=0}\right\}.
\end{equation*}

It remains to estimate the boundary value in this expression.  Because $\ue|_{y=0}=\ve|_{y=0}=0$, the vorticity equation gives
\begin{equation}\label{omega-boundary-evolution}
\pt\ome\big|_{y=0}
=\left\{\epsilon^2\px^2\ome+n\ome^{n-1}\py^2\ome
+n(n-1)\ome^{n-2}|\py\ome|^2\right\}\Big|_{y=0}.
\end{equation}
The trace estimate in Lemma \ref{chazhi}, the pointwise bounds defining $H^{s,\gamma}_{\sigma,\delta}$, and $0<\epsilon\leq1$ show, for
$s\geq 6$, that
\begin{equation}\label{omega-boundary-time}
\left\|\pt\ome\big|_{y=0}\right\|_{L^\ty(\T)}
\leq C_{s,\gamma,\sigma,n,\delta}
\bigl(1+A_\epsilon(t)\bigr)A_\epsilon(t).
\end{equation}
Indeed, the first two terms in \eqref{omega-boundary-evolution} are the case $\alpha=0$ of \eqref{Qaa}; the remaining quadratic term is estimated in exactly the same way as $Q_{(0,0)}^3$ in \eqref{Qa23}.  Integrating
\eqref{omega-boundary-time} in time gives
\begin{equation}\label{omega-boundary-lower}
\begin{split}
\min_{[0,t]\times\T}\ome\big|_{y=0}
&\geq\min_\T\om_0\big|_{y=0}
-C_{s,\gamma,\sigma,n,\delta}
\bigl(1+A_\epsilon(t)\bigr)A_\epsilon(t)t\\
&\geq\min_\Om(1+y)^\sigma\om_0
-C_{s,\gamma,\sigma,n,\delta}
\bigl(1+A_\epsilon(t)\bigr)A_\epsilon(t)t.
\end{split}
\end{equation}
For $t$ sufficiently small, the last member is nonnegative.  Substituting \eqref{omega-boundary-lower} into \eqref{B0-minimum-principle} proves \eqref{wxiajie}.  In particular, if $\min\limits_\Om(1+y)^\sigma\om_0\geq2\delta$, then the right-hand side of \eqref{wxiajie} is at least $\delta$ after decreasing the lifespan if necessary; hence the weighted monotonicity is propagated.

\hfill $\square$
\section{Local-in-time existence and uniqueness}\label{sec:local-wp}

In this section we complete the construction of a solution to the velocity problem \eqref{feiniu2}.
The argument follows the compactness and nonlinear cancellation method of \cite[Section 6]{NM}.
Two points require a modification in the present power-law setting.
First, the normal diffusion is nonlinear:
\begin{equation}\label{power-diffusion-form}
n\ome^{n-1}\py^2\ome+n(n-1)\ome^{n-2}|\py\ome|^2=\py^2(\ome^n).
\end{equation}
Second, in the uniqueness argument the difference of the nonlinear fluxes is linearized by an averaged coefficient.

\subsection{Uniform Bounds and lifespan for $\ome$}

Lemma \ref{lemmalocal} guarantees the existence of a solution $\ome$ to the regularized Prandtl equations \eqref{wodu} on the time interval $[0,T_{\varepsilon}]$,  whose length may depend on $\varepsilon$. On the other hand, Proposition \ref{guji-combined}-\ref{xianyanguji-linfty} provides estimates for \(\omega_\varepsilon\) that are uniform with respect to $\varepsilon$. By applying a standard continuity argument, the solution can therefore be extended to a time interval independent of $\varepsilon$. Consequently, we obtain the following result
\begin{Proposition}
\label{lem:regularized-solvability}
Under the assumptions of Lemma \ref{lemmalocal}, there exists a uniform lifespan, independent of $\varepsilon$,
$
T:=T\left(s,\gamma,\sigma,n,\delta,
\|\omega_0\|_{H_{\sigma,\delta}^{s,\gamma}},U\right)>0,
$
such that the regularized vorticity system \eqref{wodu} admits a solution
$
\omega_\varepsilon\in
C\left([0,T];H_{\sigma,\delta}^{s,\gamma}\right)
\cap C^1\left([0,T];H^{s-2,\gamma}\right).
$
Moreover, for any $0<\varepsilon\leq 1$ and any $t\in[0,T]$, the solution satisfies the following estimates uniformly (in $\varepsilon$):
\begin{equation}\label{yizhiguji1}
\begin{split}
&\|\ome\|_{H_g^{s,\gamma}}\leq 4\|\omega_0\|_{H_g^{s,\gamma}},\\
&\|\sum_{|\alpha| \leq 2}\left|(1+y)^{\sigma+\alpha_2}D^\alpha\ome(t)\right|^2\|_{L^\ty(\Om)}\leq \frac{1}{\delta^2},\\
&\min_{\Om}(1+y)^\sigma \ome\geq \delta.
\end{split}
\end{equation}
\end{Proposition}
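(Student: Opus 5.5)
The proof is a continuity (bootstrap) argument on the local solution of Lemma \ref{lemmalocal}, closed with the $\epsilon$-independent bounds of Propositions \ref{guji-combined} and \ref{xianyanguji-linfty}.

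\emph{Setting up the bootstrap.} Fix $\epsilon\in(0,1]$ and let $\omega_\epsilon$ be the solution of Lemma \ref{lemmalocal} on $[0,T_\epsilon]$. Let $T^*_\epsilon$ be the supremum of times $t\le T_\epsilon$ on which the solution persists in $C([0,t];H^{s+4,\gamma}_{\sigma,\delta})\cap C^1([0,t];H^{s+2,\gamma})$ and satisfies the weakened bounds
\[
\|\ome\|_{H_g^{s,\gamma}}\le 8\|\omega_0\|_{H_g^{s,\gamma}},\qquad \|I(t)\|_{L^\infty}\le \tfrac{4}{\delta^2},\qquad \min_\Omega(1+y)^\sigma\ome\ge \tfrac{\delta}{2}.
\]
These hold near $t=0$ by continuity. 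The initial data satisfy $\omega_0\in H^{s,\gamma}_{\sigma,2\delta}$, so $\|I(0)\|_{L^\infty}\le 1/(4\delta^2)$ and $\min(1+y)^\sigma\omega_0\ge 2\delta$.

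On $[0,T^*_\epsilon)$ the hypotheses of Propositions \ref{guji1}--\ref{xianyanguji-linfty} hold with $\delta$ replaced by $\delta/2$. The constants therefore depend only on $(s,\gamma,\sigma,n,\delta)$, and $A_\epsilon(t)$, $G_\epsilon(t)$, $\Lambda_\epsilon(t)$ are bounded by a constant $K$ built from $\|\omega_0\|_{H^{s,\gamma}_g}$ and the norms of $U$ in \eqref{outer-flow-regularity}.

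\emph{Choosing $T$.} The function $\phi$ in \eqref{phit} is bounded uniformly in $\epsilon$. This uses the regularized Bernoulli law \eqref{BL}, which gives $\px\pe=-\pt U-U\px U+\epsilon^2\px^2U$. Hence $\partial_t^l\px^{s-2l+1}\pe$ is controlled by the $\epsilon$-independent norms \eqref{outer-flow-regularity}, because $\epsilon\le1$. We then choose $T$ small so that three things hold.
\begin{itemize}
\item[(a)] The right-hand side of \eqref{omeguji} is at most $16\|\omega_0\|^2_{H^{s,\gamma}_g}$. This works since $\|\omega_0\|_{H^{s,\gamma}_g}^2+\int_0^t\phi\le 2\|\omega_0\|^2_{H^{s,\gamma}_g}$ for small $t$, and the bracket factor is close to $1$. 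We assume $\|\omega_0\|_{H^{s,\gamma}_g}>0$ or add $1$ to it.
\item[(b)] In \eqref{It-growth}, $\{1/(4\delta^2)+C(1+K)K^2T\}e^{C(1+K)T}\le 1/\delta^2$.
\item[(c)] In \eqref{wxiajie}, $(1-KTe^{KT})(2\delta-C(1+K)KT)\ge \delta$.
\end{itemize}
Then on $[0,\min(T,T^*_\epsilon))$ the strengthened bounds \eqref{yizhiguji1} hold, with constants $4$, $1/\delta^2$ and $\delta$, which are strictly better than the bootstrap bounds.

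\emph{Closing the argument.} Suppose $T^*_\epsilon<T$. The solution at time $T^*_\epsilon$ still lies in $H^{s+4,\gamma}_{\sigma,\delta}$ with controlled norm. Lemma \ref{lemmalocal} can then be reapplied from that time, and continuity contradicts the maximality of $T^*_\epsilon$. This yields the solution on $[0,T]$ satisfying \eqref{yizhiguji1}.

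For the regularity classes in the statement: $\omega_\epsilon\in C([0,T];H^{s,\gamma}_{\sigma,\delta})$ follows from the bounds and the equivalence \eqref{chabuduo}. $\pt\ome\in C([0,T];H^{s-2,\gamma})$ follows by reading off the vorticity equation \eqref{wodu}, since it involves at most two derivatives of $\ome$ multiplied by bounded weighted coefficients.

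\emph{Main obstacle.} The delicate point is the restart step. Lemma \ref{lemmalocal} requires data in a higher-regularity class ($H^{s+12,\gamma}$) and its time $T_\epsilon$ depends on $\|\omega\|_{H_g^{s+4,\gamma}}$. The uniform estimates control only the $H^s$ level. To restart, I would run the same a priori estimates at the higher index $s+4$ (and $s+12$) for fixed $\epsilon$. These estimates are linear in the top norm given the $H^s$ bounds, so their Gronwall growth is finite on $[0,T]$, possibly $\epsilon$-dependent but finite. This guarantees that the higher norm cannot blow up before $T$. It also keeps the compatibility conditions at the corner, which are preserved by the flow, so the restart is legitimate.
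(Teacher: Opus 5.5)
Your proposal is correct and follows the paper's argument. It uses a continuity argument on the solution of Lemma~\ref{lemmalocal}, with the $\epsilon$-independent lifespan taken as the minimum of three times securing the energy bound via \eqref{omeguji} (with $\phi$ bounded uniformly in $\epsilon$ through \eqref{BL}), the $L^\infty$ bound via \eqref{It-growth}, and the weighted lower bound via \eqref{wxiajie}; this is exactly the paper's choice of $T_1,T_2,T_3$. Your explicit bootstrap at level $\delta/2$ and your restart discussion make precise what the paper compresses into ``a standard continuity argument'', but note one caveat: propagating $H^{s+12,\gamma}$ regularity as you sketch would, through the boundary terms $\partial_t^l\partial_x^{m-2l+1}p_\epsilon$ with $m=s+12$, require $\partial_t^{s/2+7}U$, which exceeds \eqref{outer-flow-regularity}, so that step needs another justification (the paper does not address this either).
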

\noindent\textbf{Proof.}
We now choose a lifespan independent of $\epsilon$. By the definition of $\phi(t)$ in \eqref{phit} and \eqref{BL}, we obtain
\begin{equation*}
\phi(t)\leq C_{s,\gamma,\sigma,n,\delta}\big(1+\sum_{l=0}^{\frac{s}{2}+1}\|\partial_t^l U\|_{H^{s-2l+3}(\T)}\big)^{4(s+1)}
\leq C_{s,\gamma,\sigma,n,\delta} M_1,
\end{equation*}
where $M_1:=\sup_{t\geq0}\big(1+\sum_{l=0}^{\frac{s}{2}+1}\|\partial_t^l U\|_{H^{s-2l+3}(\T)}\big)^{4(s+1)}.$ Thus, setting
\begin{equation*}
T_1:=\min\left\{\frac{3\|\om_0\|_{H^{s, \gamma}_g}^2}{C_{s,\gamma,\sigma,n,\delta} M},
\frac{1-4^{-(2s+1)}}{4^{2s+1}(2s+1)C_{s,\gamma,\sigma,n,\delta}}
\right\},
\end{equation*}
inequality \eqref{omeguji} implies that estimate $\eqref{yizhiguji1}_1$ holds for any $t\in[0,T_1]$.

Clearly,
\begin{equation*}
A_\epsilon(t)\leq 4\|\omega_0\|_{H_g^{s,\gamma}}
,~
G_\epsilon(t)\leq 4\|\omega_0\|_{H_g^{s,\gamma}}+M_1:=M_2
~\text{and}~
\Lambda_\epsilon(t)\leq C_{s,\gamma,\sigma,n,\delta}\bigl(1+M_2\bigr).
\end{equation*}
Taking \begin{equation*}
T_2:=\min\left\{T_1,\frac{1}{64\delta^2C_{s,\sigma,n} (1+\|\omega_0\|_{H_g^{s,\gamma}})\|\omega_0\|_{H_g^{s,\gamma}}^2},
\frac{\ln 2}{C_{s,\gamma,\sigma,n,\delta}\bigl(1+M_2\bigr)}
\right\},
\end{equation*}
 and combining the initial data assumption
 \begin{equation*}
\sum_{|\alpha| \leq 2}\left|(1+y)^{\sigma+\alpha_2}D^\alpha\om_0\right|^2\leq \frac{1}{4\de^2},
\end{equation*}
with \eqref{It-growth}, we conclude that $\eqref{yizhiguji1}_2$ holds for any $t\in[0,T_2]$.

Choosing
 \begin{equation*}
T_3:=\min\left\{T_1,\frac{1}{6C_{s,\gamma,\sigma,n,\delta}(1+M_2)},
\frac{\ln 2}{C_{s,\gamma,\sigma,n,\delta}\bigl(1+M_2\bigr)},
\frac{\delta}{8C_{s,\gamma,\sigma,n,\delta}(1+\|\omega_0\|_{H_g^{s,\gamma}})\|\omega_0\|_{H_g^{s,\gamma}}}
\right\},
\end{equation*}
we deduce from \eqref{wxiajie} that $\eqref{yizhiguji1}_3$ holds for any $t\in[0,T_3]$.\\
In summary, if
$T=\min\{T_1,T_2,T_3\},$
then the uniform estimates \eqref{yizhiguji1} hold for every \(t\in[0,T]\).

\hfill$\square$

\subsection{Compactness and passage to the limit}

The almost equivalence relation \eqref{chabuduo} and uniform weighted $H^s$ gives
\begin{equation}\label{uniform-usual-norms}
\sup_{0\leq t\leq T}
\bigl(\|\omega_\epsilon(t)\|_{H^{s,\gamma}}
+\|u_\epsilon(t)-U(t)\|_{H^{s,\gamma-1}}\bigr)
\leq C_{s,\ga,\si,\de}\bigl(4\|\omega_0\|_{H_g^{s,\gamma}}
+\sup_{0\leq t\leq T}\|\px^s U\|_{L^2(\T)}\bigr)< +\ty.
\end{equation}
By \eqref{feiniu2-1}, \eqref{wodu} and \eqref{uniform-usual-norms}, we have
\begin{equation}\label{uniform-time-derivatives}
\|\pt\omega_\epsilon\|_{L^\ty(0,T;H^{s-2,\gamma})}
+\|\pt(u_\epsilon-U)\|_{L^\ty(0,T;H^{s-2,\gamma-1})}\leq C.
\end{equation}
By the Aubin-Lions lemma and the compact embedding
$H^s\hookrightarrow\hookrightarrow H^{s'}_{loc}$,
there exist a sequence $\epsilon_k\to0^+$  and functions $\omega$ and $u$ such that, for any $s'<s$, the following convergence properties hold:
\begin{equation}\label{compactness-convergences}
\begin{split}
\omega_{\epsilon_k}&\stackrel{*}{\rightharpoonup}\omega
\quad\text{in }L^\ty([0,T];H^{s,\gamma}),\\
\omega_{\epsilon_k}&\rightarrow\omega
\quad\text{in }C([0,T];H^{s'}_{loc}),\\
u_{\epsilon_k}-U&\stackrel{*}{\rightharpoonup}u-U
\quad\text{in }L^\ty([0,T];H^{s,\gamma-1}),\\
u_{\epsilon_k}&\rightarrow u
\quad\text{in }C([0,T];H^{s'}_{loc}),
\end{split}
\end{equation}
where
\begin{equation*}
\begin{split}
&\omega\in L^\ty([0,T];H^{s,\gamma})\cap\bigcap_{s'<s}C([0,T];H^{s'}_{loc}),\\
&u-U\in L^\ty([0,T];H^{s,\gamma-1})\cap\bigcap_{s'<s}C([0,T];H^{s'}_{loc}).
\end{split}
\end{equation*}
Taking $s'>2$, the local uniform convergence of $\partial_xu_{\varepsilon_k}$ also implies the pointwise convergence of \begin{equation}\label{normal-velocity-limit}
v_{\epsilon_k}\rightarrow
v:=-\int_0^y\px u(t,x,z)\,dz, ~~\text{as}~\epsilon_k\to0^+.
\end{equation}
On the other hand, from \eqref{compactness-convergences},
\begin{equation*}
\omega_{\varepsilon_k}^n
\rightarrow \omega^n ~\text{in }~
C\left([0,T];H^{s'}_{loc}\right)~\text{and }~
\partial_y^2\left(\omega_{\varepsilon_k}^n\right)
\rightarrow\partial_y^2(\omega^n)
~\text{ in }~
C\left([0,T];H^{s'-2}_{loc}\right).
\end{equation*}
Passing to the limit in
\eqref{wodu} yields
\begin{equation}\label{limit-vorticity-equation}
\pt\omega+u\px\omega+v\py\omega-\py^2(\omega^n)=0.
\end{equation}
The trace convergence and \eqref{BL} give
\begin{equation}\label{limit-boundary-condition}
\py(\omega^n)|_{y=0}=n\omega^{n-1}\py\omega|_{y=0}=\px p.
\end{equation}
The initial condition follows from time-strong local convergence. The bounds
\eqref{yizhiguji1} pass to the
limit, and lower semicontinuity gives
$\omega\in L^\ty([0,T];H^{s,\gamma}_{\sigma,\delta})$.

To recover the matching condition \eqref{feiniu2}, by Lebesgue's dominated convergence theorem, we get
\begin{equation}\label{limit-matching}
\int_0^\infty\omega \,dy=\lim_{\epsilon_k\to0^+}\int_0^\infty\omega_{\epsilon_k}\,dy=U(t,x)=U
\end{equation}
This proves existence.

\subsection{An $L^2$ comparison estimate and uniqueness}

Let $(u_i,v_i,\omega_i)$, $i=1,2$, be two solutions with the same data. Set
\begin{equation}\label{difference-notation}
\bar u=u_1-u_2,\quad \bar v=v_1-v_2,\quad
\bar\omega=\omega_1-\omega_2,\quad
a_2=\frac{\py\omega_2}{\omega_2}.
\end{equation}
Linearize the diffusion exactly by
\begin{equation}\label{mean-diffusion-coefficient}
\omega_1^n-\omega_2^n=\mu\bar\omega,\qquad
\mu=n\int_0^1(\omega_2+\theta\bar\omega)^{n-1}\,d\theta>0.
\end{equation}
With $\mathcal{L}_1=\pt+u_1\px+v_1\py$, subtraction gives
\begin{equation}\label{difference-system}
\begin{split}
\mathcal{L}_1\bar u-\py(\mu\bar\omega)&=-\bar u\px u_2-\bar v\omega_2,\\
\mathcal{L}_1\bar\omega-\py^2(\mu\bar\omega)
&=-\bar u\px\omega_2-\bar v\py\omega_2.
\end{split}
\end{equation}
Introduce, as in \cite[Section 6.2]{NM},
\begin{equation}\label{difference-good-unknown}
\bar g=\bar\omega-a_2\bar u
=\omega_2\py\left(\frac{\bar u}{\omega_2}\right).
\end{equation}
Subtracting $a_2$ times the first equation in \eqref{difference-system} from
the second cancels both $\bar v$ terms and yields
\begin{equation}\label{good-difference-equation}
\mathcal{L}_1\bar g-(\py^2-a_2\py)(\mu\bar\omega)=\mathcal R_2\bar u,\qquad
\mathcal R_2=-\px\omega_2+a_2\px u_2-\mathcal{L}_1a_2.
\end{equation}

\begin{Lemma}
\label{lem:difference-hardy1}
There exists a constant $C_{\sigma,\delta}$, depending only on $\sigma$ and $\delta$, such that
\begin{equation}\label{difference-hardy-estimate}
\left\|\frac{\bar u}{1+y}\right\|_{L^2}
+\|\bar\omega\|_{L^2}\leq C_{\sigma,\delta}\|\bar g\|_{L^2}.
\end{equation}
\end{Lemma}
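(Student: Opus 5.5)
The plan is to invert the relation \eqref{difference-good-unknown} explicitly. Then the two bounds come from a weighted Hardy inequality and from the pointwise bounds that define $H^{s,\gamma}_{\sigma,\delta}$. Both solutions satisfy $u_i|_{y=0}=0$, so $\bar u|_{y=0}=0$. Since $\omega_2>0$, integrating $\py(\bar u/\omega_2)=\bar g/\omega_2$ from $0$ to $y$ gives the representation
\begin{equation*}
\bar u(t,x,y)=\omega_2(t,x,y)\int_0^y\frac{\bar g(t,x,z)}{\omega_2(t,x,z)}\,dz .
\end{equation*}
All estimates below are pointwise in $(t,x)$ in the $y$-variable and are then integrated over $\T$.

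\textbf{Step 1: bound on $\bar u$.} Since $\omega_2\in H^{s,\gamma}_{\sigma,\delta}$, we have $\delta(1+y)^{-\sigma}\leq\omega_2\leq\delta^{-1}(1+y)^{-\sigma}$. Hence
\begin{equation*}
\frac{|\bar u(y)|}{1+y}\leq \delta^{-2}(1+y)^{-1-\sigma}\int_0^y(1+z)^{\sigma}|\bar g(z)|\,dz
=\delta^{-2}(1+y)^{-(1+\sigma)}\,\py^{-1}\bigl((1+\cdot)^{\sigma}|\bar g|\bigr)(y).
\end{equation*}
I apply the first inequality in \eqref{L3} with $\lambda=1+\sigma>\frac12$ and $f_1=(1+y)^{\sigma}|\bar g|$. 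This gives
\begin{equation*}
\Bigl\|(1+y)^{-(1+\sigma)}\py^{-1}f_1\Bigr\|_{L^2_y}\leq\frac{2}{2\sigma+1}\bigl\|(1+y)^{-\sigma}f_1\bigr\|_{L^2_y}=\frac{2}{2\sigma+1}\|\bar g\|_{L^2_y}.
\end{equation*}
Integrating in $x$ yields $\|\bar u/(1+y)\|_{L^2}\leq \frac{2}{(2\sigma+1)\delta^2}\|\bar g\|_{L^2}$.

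\textbf{Step 2: bound on $\bar\omega$.} Write $\bar\omega=\bar g+a_2\bar u$. From $|\py\omega_2|\leq\delta^{-1}(1+y)^{-\sigma-1}$ and the lower bound on $\omega_2$, we get $|a_2|\leq\delta^{-2}(1+y)^{-1}$. Therefore
\begin{equation*}
\|\bar\omega\|_{L^2}\leq\|\bar g\|_{L^2}+\delta^{-2}\Bigl\|\frac{\bar u}{1+y}\Bigr\|_{L^2}.
\end{equation*}
Combining this with Step 1 gives \eqref{difference-hardy-estimate} with $C_{\sigma,\delta}=1+\bigl(\frac{2}{2\sigma+1}\bigr)(\delta^{-2}+\delta^{-4})$.

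\textbf{Main obstacle.} The only delicate point is choosing the Hardy weight in Step 1 so that the growth factor $(1+z)^{\sigma}$, coming from $1/\omega_2$, is exactly compensated. Taking $\lambda=1+\sigma$ in \eqref{L3} does this: it leaves the unweighted norm of $\bar g$ with no boundary term. This works because we integrate from $y=0$, where $\bar u$ vanishes, rather than from infinity. Beyond that, one only needs $\bar g\in L^2$ so the right-hand side is finite. For the solutions constructed above, this follows from $\bar u/(1+y)\in L^2$ and $\bar\omega\in L^2$.
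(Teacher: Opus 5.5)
Your proof is correct and follows essentially the paper's argument. Like the paper, you invert $\bar g=\omega_2\py(\bar u/\omega_2)$ starting from $y=0$, then combine the two-sided bound $\delta(1+y)^{-\sigma}\le\omega_2\le\delta^{-1}(1+y)^{-\sigma}$ with a weighted Hardy inequality, and finally bound $\bar\omega=\bar g+a_2\bar u$ using $|(1+y)a_2|\le C_\delta$. The difference is only in detail: you make the Hardy step explicit (taking $\lambda=1+\sigma$ in the first inequality of \eqref{L3}) and track the constant.
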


\noindent\textbf{Proof.}
Since $\bar u|_{y=0}=0$, \eqref{difference-good-unknown} gives
\begin{equation*}
\frac{\bar u(y)}{\omega_2(y)}
=\int_0^y\frac{\bar g(z)}{\omega_2(z)}\,dz.
\end{equation*}
The bound
$\delta(1+y)^{-\sigma}\leq\omega_2\leq
\delta^{-1}(1+y)^{-\sigma}$ and weighted Hardy's inequality yield the first
term. Since $|(1+y)a_2|\leq C_\de$, the identity
$\bar\omega=\bar g+a_2\bar u$ gives the second. \hfill$\square$

The $H^{s,\gamma}_{\sigma,\delta}$ bounds imply
\begin{equation}\label{uniqueness-coefficients}
\begin{split}
&|(1+y)a_2|+|(1+y)^2\py a_2|+|(1+y)\px a_2|
+|(1+y)\mathcal R_2|\leq C_{s,\gamma,\sigma,n,\delta},\\
&0<\mu\leq C_{s,\gamma,\sigma,n,\delta}(1+y)^{-\sigma(n-1)},\qquad
|\py\mu|\leq C_{s,\gamma,\sigma,n,\delta}\frac{\mu}{1+y}.
\end{split}
\end{equation}
Differentiating \eqref{mean-diffusion-coefficient} proves the second line;
the lower bound for $\omega_i$ controls every reciprocal factor.

Similarly to \cite[Proposition 6.4]{NM}, we obtain the following conclusion:
\begin{Proposition}
\label{prop:L2-comparison-power}
For $0\leq t\leq T$,
\begin{equation}\label{L2-comparison-power}
\|\bar g(t)\|_{L^2}^2
+\int_0^t\!\int_\Om\mu|\py\bar g|^2\ dxdy
\leq \|\bar g(0)\|_{L^2}^2+C_*\int_0^t\|\bar g(\tau)\|_{L^2}^2d\tau.
\end{equation}
\end{Proposition}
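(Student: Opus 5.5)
The plan is a weighted-free $L^2$ energy estimate for the good-unknown difference equation \eqref{good-difference-equation}. We multiply it by $\bar g$ and integrate over $\Omega$. The transport part $\mathcal L_1$ contributes $\frac12\frac{d}{dt}\|\bar g\|_{L^2}^2$ plus $\frac12\int(\px u_1+\py v_1)|\bar g|^2=0$ by incompressibility. The source term satisfies $|\int\mathcal R_2\bar u\,\bar g|\le\|(1+y)\mathcal R_2\|_{L^\infty}\|\bar u/(1+y)\|_{L^2}\|\bar g\|_{L^2}\le C\|\bar g\|_{L^2}^2$, using \eqref{uniqueness-coefficients} and Lemma \ref{lem:difference-hardy1}.

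The key step is the diffusion term. We write $\mu\bar\omega=\mu\bar g+\mu a_2\bar u$. Since $\bar u=\omega_2\py^{-1}(\bar g/\omega_2)$, every term is expressed through $\bar g$ and $\bar u$. Integrating $\int\py^2(\mu\bar\omega)\bar g$ by parts once gives the good term $-\int\mu|\py\bar g|^2$. The remaining interior terms have the form $\int(\py\mu)\bar g\py\bar g$, $\int\py(\mu a_2\bar u)\py\bar g$ and $\int a_2\py(\mu\bar\omega)\bar g$. In the last one we integrate by parts again to move the derivative onto $a_2\bar g$.

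Each of these is bounded by Cauchy--Schwarz using two facts. First, $|\py\mu|\le C\mu/(1+y)$ and $\py\bar u=\bar\omega=\bar g+a_2\bar u$, with $|(1+y)a_2|+|(1+y)^2\py a_2|\le C$. Second, $\mu\le C$, because $\sigma(n-1)\ge0$. Every such term then becomes $\le\frac14\int\mu|\py\bar g|^2+C(\|\bar g\|^2+\|\bar\omega\|^2+\|\bar u/(1+y)\|^2)$, and Lemma \ref{lem:difference-hardy1} closes it to $C\|\bar g\|_{L^2}^2$. The weights $1/(1+y)$ always appear paired with $\bar u$, which is exactly what Hardy's inequality allows.

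The main obstacle is the boundary term at $y=0$, namely $-[\py(\mu\bar\omega)\bar g]_{y=0}$ together with $[a_2\mu\bar\omega\bar g]_{y=0}$. Since $\bar u|_{y=0}=0$, we have $\bar g|_{y=0}=\bar\omega|_{y=0}$. Both solutions satisfy the boundary condition $\py(\omega_i^n)|_{y=0}=\px p$ with the same pressure. Hence $\py(\mu\bar\omega)|_{y=0}=\py(\omega_1^n-\omega_2^n)|_{y=0}=0$, and that boundary contribution vanishes, just as in \cite[Proposition 6.4]{NM}.

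The leftover boundary piece $\int_\T a_2\mu|\bar g|^2|_{y=0}$ is handled by the trace inequality \eqref{L1.5}. It is bounded by $C\|\bar g\|_{L^2}\|\py\bar g\|_{L^2}$, and we need to replace $\|\py\bar g\|$ by $\|\sqrt\mu\py\bar g\|$ near the wall. For this we apply \eqref{jiguji} on $y\in[0,1]$, where $\mu\ge c_\delta>0$ because both $\omega_i\ge\delta 2^{-\sigma}$ there. This gives $\le\frac14\int\mu|\py\bar g|^2+C\|\bar g\|_{L^2}^2$. Decay at infinity kills the boundary terms there.

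Collecting all terms yields $\frac{d}{dt}\|\bar g\|_{L^2}^2+\int\mu|\py\bar g|^2\le C_*\|\bar g\|_{L^2}^2$. Integrating in time gives \eqref{L2-comparison-power}.
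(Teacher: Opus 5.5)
Your argument is correct and is essentially the paper's proof. Both run an unweighted $L^2$ energy estimate for \eqref{good-difference-equation}. The flux condition \eqref{difference-flux-boundary} removes the main wall term, and $\py(\mu\bar\omega)$ is expanded in $\bar g$ and $\bar u$. Cauchy--Schwarz then absorbs into $\int\mu|\py\bar g|^2$, and Lemma \ref{lem:difference-hardy1} returns everything to $\|\bar g\|_{L^2}^2$.

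The only deviation is that you integrate $\int a_2\bar g\,\py(\mu\bar\omega)$ by parts, which creates the extra wall term $\int_\T a_2\mu|\bar g|^2|_{y=0}$. The paper avoids it by bounding that integral directly through \eqref{flux-good-expansion}. Your trace estimate on $y\in[0,1]$ handles the extra term correctly; $\mu\le C$ and $|\py(a_2\mu)|\le C$ suffice there, so the lower bound on $\mu$ is not needed.
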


\noindent\textbf{Proof.}
Since $\bar\omega=\bar g+a_2\bar u$,
\begin{equation}\label{flux-good-expansion}
\py(\mu\bar\omega)
=\mu\py\bar g+(\py\mu+\mu a_2)\bar g
+\{\py(\mu a_2)+\mu a_2^2\}\bar u.
\end{equation}
It follows readily from the boundary conditions that
\begin{equation}\label{difference-flux-boundary}
\py(\mu\bar\omega)|_{y=0}
=\py(\omega_1^n-\omega_2^n)|_{y=0}=0.
\end{equation}
Multiplying \eqref{good-difference-equation} by $\bar g$ and integrating over $\Omega$, we observe that the transport term gives no boundary contribution, owing to $\px u_1+\py v_1=0$ and $v_1|_{y=0}=0$ . Integration by parts, together with \eqref{difference-flux-boundary}, yields the dissipation term
$$\int_\Om\mu|\py\bar g|^2\ dxdy.$$
Furthermore, by \eqref{flux-good-expansion}, \eqref{uniqueness-coefficients}, Cauchy's inequality, and Lemma \ref{lem:difference-hardy1}, for any \(\eta>0\) the remaining terms satisfy
\begin{equation*}
\begin{split}
&\left|\int_\Om\left[\py\bar g\{(\py\mu+\mu a_2)\bar g
+(\py(\mu a_2)+\mu a_2^2)\bar u\}
+a_2\bar g\py(\mu\bar\omega)+\mathcal R_2\bar u\bar g\right]\right|\\
&\qquad\leq\eta\int_\Om\mu|\py\bar g|^2
+C_{s,\gamma,\sigma,n,\delta,\eta}\|\bar g\|_{L^2}^2.
\end{split}
\end{equation*}
Choose $\eta$ small and integrate in time. On the unbounded strip, first
insert a cutoff $\chi(y/R)$; its errors vanish as $R\to\infty$ by dominated
convergence, as in \cite[(6.18)-(6.21)]{NM}. This proves
\eqref{L2-comparison-power}. \hfill$\square$

In view of $\bar g(0)=0$, \eqref{L2-comparison-power} and Gr\"{o}nwall's inequality imply that
$\bar g\equiv0$.
Equation \eqref{difference-good-unknown} then implies $\bar u=q(t,x)\omega_2$.
Since $\bar u|_{y=0}=0$ and $\omega_2|_{y=0}>0$, it follows that $q=0$ , which implies $u_1=u_2$.
The divergence condition and $v_i|_{y=0}=0$ give $v_1=v_2$, hence $\omega_1=\omega_2$.

Finally, \eqref{uniform-time-derivatives} implies weak continuity in the top
weighted space and strong continuity in lower local Sobolev spaces. Together
with the energy inequality at $t=0$, this proves the time continuity stated in
Theorem \ref{zhuding}.


\subsection*{Acknowledgements}
Zhonger Wu is supported by National Natural Science Foundation of China Grant 12601431 and STU Scientific Research Initiation Grant NTF25028T.

\subsection*{Competing interests}

This work does not have any conflicts of interest.


\begin{thebibliography}{99}

\bibitem{RA} R. Alexandre, Y. G. Wang, C. J. Xu and T. Yang, Well-posedness of the Prandtl equation in Sobolev spaces,
{\it J. Amer. Math. Soc.}, {\bf 28} (2015), 745-784.

\bibitem{JA} J. Arbel, Fa\`a di Bruno's note on eponymous formula, trilingual version, arXiv:1612.05393.



\bibitem{DC} D. Chen, Y. Wang and Z. Zhang, Well-posedness of the linearized Prandtl equation around a non-monotonic shear flow,
 {\it Ann. Inst. H. Poincar$\mathrm{\acute{e}}$ Anal. Non Lin$\mathrm{\acute{e}}$aire}, {\bf 35} (2018), 1119-1142.


\bibitem{HD} H. Dietert and D. G$\mathrm{\acute{e}}$rard-Varet, Well-posedness of the Prandtl system without any structural assumption,
 {\it Ann. PDE}, {\bf 5} (2019), no. 1, Paper No. 8, 51 pp.

\bibitem{DG2010} D. G$\mathrm{\acute{e}}$rard-Varet and E. Dormy, On the ill-posedness of the Prandtl equation,
{\it J. Amer. Math. Soc.}, {\bf 23} (2010), 591-609.

\bibitem{DG2015} D. G$\mathrm{\acute{e}}$rard-Varet and N. Masmoudi, Well-posedness for the Prandtl system without analyticity or monotonicity,
{\it Ann. Sci. $\mathrm{\acute{E}}$cole Norm. Sup. (4)}, {\bf 48} (2015), 1273-1325.

\bibitem{MI} M. Ignatova and V. Vicol, Almost global existence for the Prandtl boundary layer system,
 {\it Arch. Ration. Mech. Anal.}, {\bf 220} (2016),  809-848.


\bibitem{WXLMY} W. X. Li, N. Masmoudi  and T. Yang, Well-posedness in Gevrey function space for 3D Prandtl system without structural assumption, {\it  Comm. Pure Appl. Math.}, {\bf 75} (2022), 1755-1797.



\bibitem{WXLY2} W. X. Li and T. Yang, Well-posedness in Gevrey function spaces for the Prandtl system with non-degenerate critical points,
 {\it J. Eur. Math. Soc. (JEMS)}, {\bf 22} (2020), 717-775.

\bibitem{WXLY} W. X. Li and T. Yang, Well-posedness of the MHD boundary layer system in Gevrey function space without structural assumption, {\it SIAM J. Math. Anal.}, {\bf 53} (2021), 3236-3264.



\bibitem{CJLARMA} C. J. Liu, Y. G. Wang and T. Yang, On the ill-posedness of the Prandtl equations in three-dimensional space, {\it Arch. Ration. Mech. Anal.}, {\bf 220} (2016), 83-108.

\bibitem{CJLADV} C. J. Liu, Y. G. Wang and T. Yang, A well-posedness theory for the Prandtl equations in three space variables, {\it Adv. Math.}, {\bf 308} (2017), 1074-1126.


\bibitem{CJLCPAM} C. J. Liu, F. Xie and T. Yang, MHD boundary layers theory in Sobolev spaces without monotonicity I: Well-posedness theory,
 {\it Comm. Pure Appl. Math.}, {\bf 72} (2019), 63-121.


\bibitem{MCL} M. C. Lombardo, M. Cannone and M. Sammartino, Well-posedness of the boundary layer system,
{\it SIAM J. Math. Anal.}, {\bf 35} (2003), 987-1004.


\bibitem{NM} N. Masmoudi and T. K. Wong, Local-in-time existence and uniqueness of solutions to the Prandtl system by energy methods, {\it Comm. Pure Appl. Math.}, {\bf 68} (2015), 1683-1741.

\bibitem{OAO} O. A. Oleinik and V. N. Samokhin, {\it Mathematical Models in Boundary Layer Theory}, Appl. Math. Math. Comput. {\bf 15}, Chapman \& Hall/CRC, Boca Raton, FL, 1999.

\bibitem{MP} M. Paicu and P. Zhang,  Global existence and the decay of solutions to the Prandtl system with small analytic data,
 {\it Arch. Ration. Mech. Anal.}, {\bf 241} (2021), 403-446.


\bibitem{Said} E. M. Said,  Sur des propri\'{e}t\'{e}s de certains mod\`{e}les en m\'{e}canique des fluides pr\`{e}s d'une surface, {\it Normandie Universit\'{e}}, 2022.

\bibitem{MS} M. Sammartino and R. E. Caflisch, Zero viscosity limit for analytic solutions, of the Navier-Stokes equation on a half-space. I. Existence for Euler and Prandtl system, {\it Comm. Math. Phys.}, {\bf 192} (1998), 433-461.

\bibitem{ZT} Z. Tan and Z. E. Wu, Global small solutions of MHD boundary layer system in Gevrey function space, {\it J. Differential Equations}, {\bf 366} (2023), 444-517.

\bibitem{ZT2} Z. Tan, Z. E. Wu and M. X. Zhang, Gevrey well-posedness of the thermal boundary layer equation. {\it Commun. Math. Sci.}, {\bf 23} (2025), 1319-1355.

\bibitem{CW} C. Wang, Y. X. Wang and P. Zhang, On the global small solution of 2-D Prandtl system with initial data in the optimal Gevrey class, {\it Adv. Math.}, {\bf 440} (2024), 109517.

\bibitem{YGW1} Y.G. Wang and S.Y. Zhu,
Blowup of solutions to the thermal boundary layer problem in two-dimensional incompressible heat conducting flow,
{\it Commun. Pure Appl. Anal.}, {\bf 19} (2020), 3233-3244.

\bibitem{YGW2} Y.G. Wang and S.Y. Zhu,
Well-posedness of thermal boundary layer equation in two-dimensional incompressible heat conducting flow with analytic datum,
{\it Math. Methods Appl. Sci.}, {\bf 43} (2020), 4683-4716.

\bibitem{ZEW} Z. E. Wu, A well-posedness theory for the MHD boundary layer equations in three space variables, {\it J. Differential Equations}, {\bf 447} (2025), Paper No. 113650, 37 pp.

\bibitem{WZE} Z. E. Wu and Z. Tan, Local well-posedness of the boundary layer for a pseudo-plastic fluid by energy methods, {\it arXiv:2608.05693}, 2026.

\bibitem{ZPX} Z. P. Xin and L. Q. Zhang, On the global existence of solutions to the Prandtl's system, {\it Adv. Math.}, {\bf 181} (2004), 88-133.

\bibitem{CJX} C. J. Xu and X. Zhang, Long time well-posedness of the Prandtl equations in Sobolev space, {\it J. Differential Equations} {\bf 263} (2017), 8749-8803.


\bibitem{JWZ} J. W. Zhang, Solutions for the problem of boundary layer formation in a pseudo-plastic fluid: the case of gradual acceleration, {\it J. Math. Anal. Appl.}, {\bf 328} (2007), 220-244.

\bibitem{PZ} P. Zhang and Z. F. Zhang, Long time well-posedness of Prandtl system with small and analytic initial data, {\it J. Funct. Anal.}, {\bf 270} (2016), 2591-2615.




\end{thebibliography}
\end{document}